\newcommand\subsubsubsection{\@startsection{paragraph}{4}{\z@}{-2.5ex\@plus -1ex \@minus -.25ex}{1.25ex \@plus .25ex}{\normalfont\normalsize\bfseries}}
\newcommand\subsubsubsubsection{\@startsection{subparagraph}{5}{\z@}{-2.5ex\@plus -1ex \@minus -.25ex}{1.25ex \@plus .25ex}{\normalfont\normalsize\bfseries}}
\newcommand{\we}{\wedge}
\newcommand{\pp}[2]{\frac{\partial#1}{\partial#2}}
\newtheorem*{theoremA}{Theorem \hypertarget{thm.a}{A}}
\newtheorem*{theoremAA}{Theorem A}
\newtheorem*{theoremB}{Theorem B}
\newtheorem*{theoremC}{Theorem \hypertarget{thm.c}{C}}
\newtheorem*{theoremCC}{Theorem C}
\newtheorem*{theoremD}{Theorem D}
\title{$E$-structures and almost regular Poisson manifolds}
\author[1]{Alfonso Garmendia}
\affil[1]{Max Planck institute for mathematics, Bonn, Germany.
Email: 
\texttt{garmendia@mpim-bonn.mpg.de
}
\footnote{Both authors are supported by the Spanish State Research Agency MCIU/AEI / 10.13039/501100011033/ FEDER, UE., through the Severo Ochoa and María de Maeztu Program for Centers and Units of Excellence in R\&D (project CEX2020-001084-M) and the Spanish State Research Agency grants reference PID2019-103849GB-I00 of AEI / 10.13039/501100011033 and PID2023-146936NB-I00 funded by MICIU/AEI/
10.13039/501100011033 and, by ERDF/EU.  The authors are also partially supported by the AGAUR project 2021 SGR 00603 Geometry of Manifolds and Applications, GEOMVAP.} }
\author[2]{Eva Miranda}
\affil[2]{ Laboratory of Geometry and Dynamical Systems \& SYMCREA, Department of Mathematics, EPSEB, Universitat Polit\`{e}cnica de Catalunya-IMTech
in Barcelona and
\\ CRM Centre de Recerca Matem\`{a}tica, Campus de Bellaterra
Edifici C, 08193 Bellaterra, Barcelona
 \footnote{  Eva Miranda is supported by the Catalan Institution for Research and Advanced Studies via an ICREA Academia Prize 2021 and by the Alexander Von Humboldt Foundation via a Friedrich Wilhelm Bessel Research Award. Eva Miranda is also supported by the Spanish State
Research Agency, through the Severo Ochoa and Mar\'{\i}a de Maeztu Program for Centers and Units
of Excellence in R\&D (project CEX2020-001084-M). } 
Email:
\texttt{ eva.miranda@upc.edu}
 }
\begin{document}

\date{}

\maketitle
\abstract{In recent years, $b$‑symplectic manifolds have emerged as important objects in symplectic geometry. These manifolds are Poisson manifolds that exhibit symplectic behavior away from a distinguished hypersurface, where the symplectic form degenerates in a controlled manner. Inspired by this rich landscape, $E$-structures were introduced by Nest and Tsygan in \cite{NT2} as a comprehensive framework for exploring generalizations of $b$-structures. This paper initiates a deeper investigation into their Poisson facets, building on foundational work by \cite{MS21}. We also examine the closely related concept of almost regular Poisson manifolds, as studied in \cite{AZ17}, which reveals a natural Poisson groupoid associated with these structures.

In this article, we investigate the intricate relationship between $E$-structures and almost regular Poisson structures. Our comparative analysis not only scrutinizes their Poisson properties but also offers explicit formulae for the Poisson structure on the Poisson groupoid associated to the $E$-structures as both Poisson manifolds and singular foliations. In doing so, we reveal an interesting link between the existence of commutative frames and Darboux-Carathéodory-type expressions for the relevant structures.

}
 %\tableofcontents
 
 \section{Introduction}

Singular symplectic manifolds and more concretely $E$-symplectic manifolds have been the object of intense investigation in the last years. The notion of $E$-symplectic manifold appeared for the first time in the article \cite{NT2} where the authors discuss these structures as a generalization of $b$-symplectic structures. Given a locally free finitely generated $C^\infty(M)$-module $E$ of vector fields an $E$-structure as done in \cite{MS21} is a structure on a natural vector bundle $\EA$ associated to $E$.  For any $E$-symplectic manifold there is natural dual object associated to it which is a Poisson bivector. $E$-structures are also closely related to foliation theory, in particular, locally free finitely generated $C^\infty(M)$-modules of vector fields are the same as a special class of singular foliations called ``almost regular'' (as in \cite{P01}) and the associated vector bundle $\EA$ is naturally an almost injective Lie algebroid. 

%In this short article we show new connections between these objects and investigate their integrations. More concretely, we associate symplectic and Poisson Lie groupoids integrating the associated Poisson structure and connect the symplectic and Poisson structures on the base and on the integrating groupoids.

This paper investigates interactions among three notions; almost regular foliations, and two generalizations of $b$-symplectic manifolds: $E$-symplectic manifolds and almost regular Poisson structures. In section \ref{sec.def} we will properly introduce these objects, nevertheless, let us give an intuitive introduction in the following paragraphs.

An almost regular foliation of rank $k\in \mathbb{N}$ is a singular foliation, meaning a subset of vector fields $E\subset \mathfrak{X}(M)$ closed under the Lie bracket on a manifold $M$, characterized by the existence of $k$ vector fields $X_1,\dots,X_k\in \mathfrak{X}(U)$ in a neighbourhood $U$ of a point $p\in M$, such that locally $E|_U=\left<X_1,\dots,X_k\right>_{\gi(U)}$. Moreover, these vector fields must be $\mathcal{C}^\infty(U)$-independent, nevertheless some of them can vanish at certain points in $U$ allowing for singularities.

In $\mathbb{R}^2$, for instance, the almost regular foliation $E=\{X\in \mathfrak{X}(M) \st X \, \text{is tangent to } Z=\{0\}\times \KR\}$ is generated (globally) by exactly two vector fields, namely, $x\dex$ and $\dey$. In this case the singular foliation is called $b$-foliation where $b$ is reminiscent of $b$-manifolds introduced by Melrose to address the index theorem on manifolds with boundary $Z$. 
$b$-Manifolds have been the main characters in the theory of deformation quantization on symplectic manifolds with boundary as observed by Nest and Tsygan \cite{NT1}. These generalized singular symplectic manifolds have been denoted in the literature under several names $b$-symplectic or $log$-symplectic manifolds and are our main source of inspiration in this article. For them, $Z$ will no longer refer to the boundary but rather to a submanifold often called \emph{critical set}.

For the generalizations of $b$-symplectic manifolds: {\it \textbf{$E$-symplectic manifolds} involve symplectic structures on almost regular foliations}, and {\it \textbf{almost regular Poisson manifolds}, extending $b$-symplectic manifolds as Poisson manifolds, where the symplectic foliation is an almost regular foliation.}

To exemplify these generalizations, a $b$-symplectic manifold exhibits two distinct almost regular foliations: one defined by the $b$-tangent bundle and the other by its symplectic foliation, illustrated further in the ensuing example.

\begin{ex}\label{ex.b.poi.edge}
    Let us use $M=\KR^2$ and $Z=\{(0,y)\in \KR^2 \}$ 
    \begin{enumerate}
        \item  The submodule $E=\{X\in \gx(M) \st X|_Z\subset TZ\}$ is an almost regular foliation with associated vector bundle being $\EA \cong\KR^2\times M$ and an anchor map $\rho:\EA\fto TM$ given by 
        $$\rho(a,b,x,y)=xa\dex|_{(x,y)} + b\dey |_{(x,y)}.$$
         Let $X,Y:M\fto \EA$ be the  canonical constant sections generating $\gs(\EA)$, i.e. $X(p)=(e_1, p)$ and $Y(p)=(e_2,p)$ for all $p\in M$, where $e_1,e_2$ are the canonical basis of $\KR^2$. Let $\alpha, \beta\in \gs(\EA^*)$ be the dual basis of $X,Y$.
        
         The closed and non-degenerate two form $\omega=\alpha\wedge\beta$ gives the $E$-symplectic manifold $(M,\EA,\rho,\omega)$. Its associated bivector is $\pi_{\omega}=X\wedge Y\in \gs(\EA^{\wedge 2})$.
        
        This structure induces an almost regular Poisson structure $\pi^\sharp:=\rho\circ \pi_\omega^\sharp\circ \rho^*\colon T^*M\fto TM$ in $M$. This structure satisfies:
        $$\pi^\sharp(dx):=\rho(\pi_{\EA}^\sharp(\rho^*(dx)))=\rho(\pi_{\EA}^\sharp(x\alpha))=\rho(x Y) =x\dey,$$
        $$\pi^\sharp(dy):=\rho(\pi_{\EA}^\sharp(\rho^*(dy)=\rho(\pi_{\EA}^\sharp(\beta))=\rho(-X) =-x\dex,$$
        therefore the Poisson bivector in $M$ is $\pi=x(\dex\wedge \dey)$.

    \item Starting with the almost regular Poisson structure $\pi=x(\dex\wedge \dey)$ in $M$, the symplectic foliation is the following almost regular foliation:
    $$E'=\pi^\sharp(\gw(M))=\left<-x\dex, x\dey\right>=\{X\in \gx(M) \st X|_Z=0\}.$$
     Its vector bundle is given by $\EEA\cong\KR^2\times M$ with anchor:
     $$\rho(a,b,x,y)=xa\dex|_{(x,y)} + xb\dey |_{(x,y)}.$$
     Let $X',Y'\subset \gs(\EEA)$ the canonical constant sections. Let $\alpha',\beta'\in \gs(\EEA^*)$ be the dual sections of $X',Y'$.
    
  The map $\lambda\colon T^*M \to \EEA$ is given by $\lambda(dx) = Y'$ and $\lambda(dy) = -X'$. Its dual  $\lambda^*\colon \EEA^*\fto TM$ is given by the formulas $\lambda^*(\alpha')=-\dey$, $\lambda^*(\beta')=\dex$. Then, there is a closed 2-form $\omega_{\pi} = x(\alpha' \wedge \beta')$ that is symplectic in an open dense subset of $M$ and induces the Poisson structure on $M$.
  
    \end{enumerate}
\end{ex}

The example above illustrates the correspondence between $b$-symplectic manifolds and $b$-Poisson manifolds explained in \cite{GMP14}. Moreover, it also highlights some differences between the $b$-foliation and the symplectic foliation.

Another interesting feature is that the symplectic foliation of the Poisson structure associated with a $b$-symplectic manifold with compact leaves on the critical set is endowed with an \emph{edge structure}. Edge structures were studied by Fine \cite{fine}, motivated by their connections to twistor theory. An edge structure is associated with a submanifold $Z$ that is also a fibration, characterized as follows: edge vector fields are not only tangent to $Z$ at points in $Z$, but they remain tangent to the fibers of the fibration.

Given a $b$-symplectic manifold $(M,Z)$ with compact singular set $Z$ and an embedded symplectic leaf in $Z$, the symplectic foliation defined by the Poisson structure provides an example of such a structure (see Ex. \ref{ex.edge}).

In \cite{fine}, an edge structure naturally arises in the twistor space of $\mathbb H^4$. Let $(Z, J)$ denote the twistor space of $\mathbb H^4$ with the Eells–Salamon almost complex structure. In twistor coordinates $(x, y_i, z_i)$, this almost complex structure blows up as $x$ approaches zero. Edge geometry provides a framework to treat singularities in $J$ as smooth up to the boundary. The edge structure considered in \cite{fine} is associated with the fibration $\partial Z \to S^3$ given by the twistor projection. Another edge structure under consideration in \cite{fine} is associated to a compact Riemann surfaces with boundary, where the projection of the boundary as fibration is the identity map (a so-called $0$-structure).

\begin{comment}
One can generalize the example to the following case, given here as a proposition.

\begin{prop}\label{prop:edge}
    The symplectic foliation of any compact $b$-symplectic manifold $M$ with critical set $Z\subset M$, is given by all the vector fields tangent to the fibers of a submersion $\pi\colon Z\fto S^1$.
\end{prop}

\begin{proof}
    This is a clear consequence of Theorem 19 from \cite{GMP11}. Here $Z$ is a symplectic mapping torus with fibration $Z\fto S^1$. Then the symplectic foliation of $M$ is the one given by the vector fields tangent to this fibration in $Z$.
\end{proof}

The theorem above shows that the symplectic foliation of a compact $b$-symplectic manifold is an edge foliation as in \cite{fine}. \\
\end{comment}

These examples motivate exploring the relationships between $E$-symplectic and almost regular Poisson structures, leading to the following results:

\begin{theoremAA}
    Any $E$-symplectic structure on $M$ induces a Poisson structure. If $E$ is regular or of maximal rank, the Poisson structure is regular or almost regular.

Conversely, any almost regular Poisson manifold $(M, \pi)$ with an almost regular symplectic foliation $E$ has a closed $E$-form of degree 2 that is $E$-symplectic on an open dense subset of $M$.
\end{theoremAA}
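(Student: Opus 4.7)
I would handle the two implications separately, with the converse being substantially harder than the forward direction. For the forward direction, the idea is to build a Poisson bivector on $\EA$ from $\omega$ and push it down to $M$ via the anchor. Non-degeneracy of $\omega^\flat\colon \EA\to\EA^*$ lets me define $\pi_\omega^\sharp := (\omega^\flat)^{-1}$, producing a bivector $\pi_\omega \in \Gamma(\wedge^2 \EA)$; the condition $d_\EA \omega = 0$ is equivalent to $[\pi_\omega, \pi_\omega]_\EA = 0$ by the standard symplectic-to-Poisson argument transposed to the Lie algebroid setting. Pushing forward, $\pi := (\wedge^2 \rho)(\pi_\omega)$, equivalently $\pi^\sharp = \rho \circ \pi_\omega^\sharp \circ \rho^*$ as in Example~\ref{ex.b.poi.edge}, is Poisson because the anchor is a Lie algebroid morphism and therefore intertwines the two Schouten brackets. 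The symplectic foliation of $\pi$ is generated by the Hamiltonian vector fields $\rho(\pi_\omega^\sharp(\rho^*(df)))$, which lie in $\rho(\EA) = E$ and generically span it, so the symplectic foliation coincides with $E$ and inherits its regularity or almost regularity.

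For the converse, let $(M,\pi)$ be almost regular Poisson with almost regular symplectic foliation $E$ and associated Lie algebroid $(\EA,\rho)$. I first lift $\pi^\sharp$ through $\rho$: since $\pi^\sharp(\Omega^1(M)) \subseteq E = \rho(\Gamma(\EA))$ and $\rho\colon\Gamma(\EA)\to\Gamma(E)$ is an isomorphism (surjective by construction of the Lie algebroid of the foliation, and injective because a section of $\EA$ vanishing on the open dense locus of pointwise injectivity of $\rho$ must vanish everywhere by continuity), I obtain a unique $C^\infty(M)$-linear, hence bundle, map $\lambda\colon T^*M \to \EA$ with $\rho\circ\lambda = \pi^\sharp$. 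This $\lambda$ plays the role of $\pi_\omega^\sharp\circ\rho^*$ from the forward construction.

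To produce $\omega\in\Gamma(\wedge^2\EA^*)$, I work on the open dense subset $U\subseteq M$ where $\rho$ is pointwise injective and $\pi$ has maximal rank. On $U$, $\lambda$ is surjective, and I define $\omega^\flat\colon\EA|_U\to\EA^*|_U$ by $\omega^\flat(\lambda(\xi)):=\rho^*(\xi)$; well-definedness follows from $\ker\lambda|_U = \ker\pi^\sharp|_U = E^\circ|_U = \ker\rho^*|_U$, and skew-symmetry from that of $\pi$ via $\langle\omega^\flat(\lambda\xi_1),\lambda\xi_2\rangle = -\pi(\xi_1,\xi_2)$; this yields a non-degenerate 2-form on $\EA|_U$. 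The main obstacle is to extend $\omega$ smoothly across the singular locus of $\rho$: the bivector $\pi_\omega^\sharp$ itself blows up there (in Example~\ref{ex.b.poi.edge} one has $\pi_\omega = \tfrac{1}{x} X'\wedge Y'$), yet $\omega^\flat$ should remain smooth (as $\omega_\pi = x(\alpha'\wedge\beta')$ does there). I plan to verify this by choosing local generators $X_1,\dots,X_k$ of $\EA$ and expressing $\omega_{ij}=\omega(X_i,X_j)$ directly from $\lambda$ and $\rho^*$, using almost-regularity to control the behavior near the singular set. Closedness $d_\EA\omega=0$ then follows from $[\pi,\pi]=0$ by the Lie-algebroid symplectic-to-Poisson argument on $U$ and extends to all of $M$ by continuity. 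The smoothness extension is where I expect the bulk of the work; the remaining steps amount to careful duality manipulations.
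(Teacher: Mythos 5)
Your forward direction follows the paper's construction ($\pi^\sharp=\rho\circ\pi_\omega^\sharp\circ\rho^*$, with $[\pi,\pi]=0$ coming from $d\omega=0$), but your justification of the second sentence contains a genuine error: the symplectic foliation of $\pi$ does \emph{not} coincide with $E$ in general. Two submodules of $\gx(M)$ that agree on a dense open set need not be equal, and the paper's own Example \ref{ex.b.poi.edge} is a counterexample: for the $b$-structure $E=\{X \st X|_Z\subset TZ\}$ on $\KR^2$ the induced bivector is $x\,\dex\wedge\dey$, whose symplectic foliation is $\{X \st X|_Z=0\}$, a proper submodule of $E$. The correct argument in the full-rank case is different: $\rho^*\colon T^*M\fto \EA^*$ is then an almost injective map between bundles of rank $\dim M$, so the composite $\rho\circ\pi_\omega^\sharp\circ\rho^*\colon T^*M\fto TM$ is almost injective of full rank and its image is an almost regular foliation with ai-algebroid $T^*M$ itself. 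Example \ref{ex.not.ar} of the paper shows the conclusion genuinely fails without the full-rank hypothesis, so an argument that never uses that hypothesis cannot be right.

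In the converse direction you have identified the correct map $\lambda$ and the correct defining relation for $\omega_\pi^\flat$, but you then restrict to the dense open set $U$ and explicitly defer the smooth extension across the singular locus (``I plan to verify this\dots the bulk of the work''). That deferred step is the actual content of the statement, and no extension argument is needed: $\lambda$ is surjective at \emph{every} point (a surjective map of $\gi(M)$-modules of sections of vector bundles is fiberwise surjective), hence $\lambda^*$ is injective everywhere, and from $\lambda^*\circ(-\rho^*)=\pi^\sharp=\rho\circ\lambda$ one gets $\ker\lambda_x\subseteq\ker\rho^*_x$ at every $x\in M$, not only on $U$. Therefore $\omega_\pi^\sharp\circ\lambda=-\rho^*$ defines a smooth bundle map $\EA\fto\EA^*$ globally; concretely, choosing $\xi_1,\dots,\xi_k\in\gw^1(U)$ with $\lambda(\xi_i)$ a local frame of $\EA$, the coefficients $\omega_\pi(\lambda\xi_i,\lambda\xi_j)=\pi(\xi_i,\xi_j)$ are manifestly smooth, which is exactly why $\omega_\pi$ stays smooth while $\pi_\omega^\sharp=(\omega_\pi^\sharp)^{-1}$ blows up. Until you supply this (or an equivalent) argument, the 2-form has only been constructed on an open dense subset, not on $M$.
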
 

The letter $E$ is used in the articles \cite{NT1} and \cite{MS21}, which inspired this work. The theorem above implies that both $E$-symplectic and almost regular Poisson structures with symplectic foliation $E$ are $E$-structures.

 For any almost regular foliation, there is a Lie groupoid integrating it (confer \cite{P01}). This  Lie groupoid (whose set of arrows is a finite-dimensional manifold) is a model for the infinite-dimensional group of flows of elements in $E$.

Moreover, as a consequence of Lie bi-algebroid theory \cite{AZ17}, the $E$-structures from $E$-symplectic and almost regular Poisson manifolds generate a unique multiplicative Poisson structure in the groupoid. This structure is defined using abstract constructions and studying its properties can be challenging.

In this paper, we present concrete results for this Poisson structure, including explicit formulas in some cases. We would like to emphasize the following results:

\begin{theoremB}\label{theorem.b}
    The multiplicative Poisson structure $\hat{\pi}$ in $\CG$ given by $\pi$ is a regular Poisson structure. Moreover, $\pi_\CG$ is completely characterized by the equations $\bs_*(\pi_\CG)=-\pi$ and $\bt_*(\pi_\CG)=\pi$.

\end{theoremB}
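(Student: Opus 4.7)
The plan is to combine the integration theory of Lie bialgebroids with a local computation in a Darboux--Carath\'eodory chart for the $E$-symplectic form.

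For existence and the pushforward equations, I would begin from the $E$-structure $\pi_{\EA}\in\Gamma(\EA^{\wedge 2})$ associated to $\pi$ via Theorem A. The pair $(\EA,\EA^*)$ is then a Lie bialgebroid, and since an almost injective algebroid is integrable by Debord's theorem, the Mackenzie--Xu integration theorem produces a unique multiplicative Poisson structure $\hat\pi$ on the source-simply-connected $\CG$. The identities $\bs_*(\hat\pi)=-\pi$ and $\bt_*(\hat\pi)=\pi$ are then the familiar Poisson groupoid compatibilities: the target map is Poisson and the source map is anti-Poisson.

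Regularity would be verified locally. In a Darboux--Carath\'eodory frame provided by the $E$-symplectic form $\omega$, the bivector $\pi_{\EA}$ takes constant coefficients in a local frame $X_1,\dots,X_{2k}$ of $\EA$. Over such a chart the right- and left-invariant extensions of the $X_i$ are defined on $\CG|_U$, and multiplicativity of $\hat\pi$ combined with the pushforward equations forces a constant-coefficient expression for $\hat\pi$ in terms of these invariant fields. The resulting bivector has pointwise rank $2k=2\,\mathrm{rank}(E)$ everywhere on $\CG|_U$, so $\hat\pi$ has constant rank globally.

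For the characterization, suppose $\hat\pi'$ is another multiplicative Poisson bivector on $\CG$ with $\bs_*(\hat\pi')=-\pi$ and $\bt_*(\hat\pi')=\pi$. Differentiating gives a Lie bialgebroid structure on $(\EA,(\EA^*)')$, whose anchor and bracket on the dual are pinned down by the pushforward conditions via the infinitesimal correspondence. On the open dense subset where $\rho$ is injective the equalities with the data of $(\EA,\EA^*)$ are forced by $\pi$ alone, and almost regularity of $E$ lets one extend these identifications continuously to all of $M$. Hence $(\EA,(\EA^*)')=(\EA,\EA^*)$, and the uniqueness part of Mackenzie--Xu integration yields $\hat\pi'=\hat\pi$.

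The step I expect to be the main obstacle is this uniqueness: one must check that the infinitesimal data of a multiplicative Poisson structure on $\CG$ truly extends across the singular locus of $\rho$, so that two multiplicative Poisson structures agreeing on the regular open set must coincide everywhere. This is precisely where the $C^\infty(M)$-module hypotheses built into almost regularity do the work, guaranteeing that the ambiguity on the singular set is trivial.
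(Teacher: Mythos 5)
There is a genuine gap, and it sits in the regularity step. Your argument hinges on a Darboux--Carath\'eodory chart in which ``the bivector $\pi_{\EA}\in\gs(\EA^{\wedge 2})$ takes constant coefficients in a local frame of $\EA$''. Two things go wrong. First, that bivector need not exist: for an almost regular Poisson manifold Theorem \hyperlink{thm.a}{A} only produces a closed form $\omega_\pi\in\gs(\wedge^2\EA^*)$ which degenerates on the singular locus (the paper explicitly notes that the dual differential of the bialgebroid ``is not necessarily given by a bivector in $\EEA$''); already for $\pi=x\,\dex\wedge\dey$ one has $\omega_\pi=x\,\alpha'\wedge\beta'$, of non-constant rank, so no constant-coefficient frame expression is possible. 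Second, the chart itself need not exist: by Proposition \ref{prop.darboux.e} a splitted constant-coefficient form forces a commutative frame, and the Hamiltonian foliation $\EA$ of an almost regular Poisson manifold generally has none --- the appendix's own example $\left<x\dex,\,x\dey\right>$, which \emph{is} $E=\pi^\sharp(\gw(\KR^2))$ for the $b$-Poisson structure, has no commutative frame at the origin. Even granting the chart, constant coefficients of a degenerate form would not yield pointwise rank $2k$. The paper's mechanism is different and is designed precisely to dodge this: since $\lambda\colon T^*M\to\EA$ is surjective, $\lambda^*\colon\EA^*\to TM$ is injective, so it is the \emph{dual} $\EA^*$ that is a regular foliation with commutative frames; the explicit formula of Corollary \ref{thm.sym.real.form} at the identity bisection, $\sum_j X_j\wedge\lambda^*(\alpha_j)-\sum_{i,j}f_{i,j}\,\lambda^*(\alpha_i)\wedge\lambda^*(\alpha_j)$ under the splitting $T\CG|_M=\EA\oplus TM$, has rank $2k$ because the fiber of $\EA$ pairs against the $k$ independent directions $\lambda^*(\alpha_j)$, with all the degeneracy of $\omega_\pi$ absorbed harmlessly into the $f_{i,j}$.

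Your uniqueness step also asserts more than it proves. The conditions $\bs_*(\hat\pi')=-\pi$ and $\bt_*(\hat\pi')=\pi$ constrain only the images of $\hat\pi'$ under $d\bs^{\wedge 2}$ and $d\bt^{\wedge 2}$; the bracket of the induced dual algebroid $(\EA^*)'$ is encoded in the first-order jet of $\hat\pi'$ transverse to the unit section, and it is not shown that the pushforward conditions pin this down, even on the regular set. This is exactly the work done in the paper by the morphism $\varphi\colon\CG\to\CG^*$ integrating $\omega_\pi^\sharp$, which is a local diffeomorphism on a dense open subset of $\CG$ and transfers the question to $\CG^*$, where Lemma \ref{lem.comm.loc.poi} and the immersion $\bt\times\bs$ into the pair groupoid give an honest uniqueness statement (Theorem \ref{thm.uniq.pi}). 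Your existence argument via Mackenzie--Xu and the pushforward identities does match the paper, but without the detour through $\CG^*$ both the regularity and the characterization remain unestablished.
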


 This statement says that the Poisson structure at the groupoid level does not have singularities. Moreover, Corollary \ref{thm.sym.real.form} states that if we have explicit formulas for $\bs$ and $\bt$ in a chart, we can explicitly describe this Poisson structure within that chart. Androulidakis and Skandalis constructed such explicit formulas for $\bt$ and $\bs$ in certain charts in the paper \cite{AS09}, where the groupoid is locally diffeomorphic to open sets $\CU \subset \mathbb{R}^k \times M$. Here, the source map is the projection to $M$, $k$ is the number of vector fields generating the almost regular foliation, and the target map is determined by following the flows of linear combinations of these generators. A summary of this result is provided in Section \ref{sec.hol.grpd}.

We also consider specific cases as the symplectic integration of $E$-structures including $b$-, $b^m$-, and elliptic structures, culminating in the following theorem, whose proof and precise statement are provided in Section \ref{sub.sub.section.f.poi}.
\begin{theoremCC} Let $(M,\pi)$ any Poisson manifold of dimension $2+2k$ such that $\pi$ is locally written as:
$$\pi= \left(f(x)\dex\wedge\dey \right)+ \pi_0(z,w)$$
for $f$ with discrete zeros, $\pi_0$ dual to a symplectic form in $\KR^{2k}$, $x,y$ coordinates in $\KR$ and $z,w$ coordinates in $\KR^k$.\\

The Poisson manifold $(M,\pi)$ has a symplectic integration that, near the identity, looks like $\KR^4\times\KR^{2k}\times\KR^{2k}$  with Poisson structure:
$$\pi_\CG\scalebox{.75}{$(a,\!b,\!x,\!y,\! z'\!,\!w'\!,\!z,\!w)$}\!=\! \left(\scalebox{1.1}{$\dea\!\wedge\!\dey$} +\alpha\scalebox{.8}{$(a,\!x)$}\,\scalebox{1.1}{$\deb\!\wedge\!\dex$}+\scalebox{1.35}{$\frac{b}{a}$}(1\!-\!\alpha\scalebox{.8}{$(a,\!x)$})\,\scalebox{1.1}{$\deb\!\wedge\!\dey$} -f(x)\,\scalebox{1.1}{$\dex\!\wedge\!\dey$}\right)+ \pi_0\scalebox{.8}{$(z'\!,\!w')$}  -\pi_0\scalebox{.8}{$(z,\!w)$}\,,$$
where, $a=b=0$, $z'=z$, $w'=w$ represents the identity bisection, the source and target maps are given explicitly, and the function $\alpha(a,x)$ is determined by the resolution of a specific ODE, and satisfies $\alpha(0,x)=1$.
\end{theoremCC}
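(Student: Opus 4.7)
The strategy is to exploit the product structure of the given Poisson manifold. Locally $\pi=\pi_2+\pi_0$ where $\pi_2=f(x)\dex\wedge\dey$ on the $(x,y)$–plane is an almost regular Poisson structure and $\pi_0$ is symplectic on $\KR^{2k}$. Since the two summands involve disjoint coordinates they commute, so the symplectic integration factorises as a product of the individual integrations. The symplectic piece integrates canonically by the pair groupoid $\KR^{2k}\times\KR^{2k}\rightrightarrows\KR^{2k}$, producing the Poisson bivector $\pi_0(z',w')-\pi_0(z,w)$, so only the two–dimensional factor requires substantive work.

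For $\pi_2$, the symplectic foliation is the almost regular foliation $E=\langle f(x)\dex,\, f(x)\dey\rangle\subset\gx(\KR^2)$, whose associated Lie algebroid is the trivial rank–two bundle $\KR^2\times\KR^2$ with anchor $\rho(a,b,x,y)=a f(x)\dex+b f(x)\dey$. I would invoke the chart for the holonomy groupoid recalled in Section~\ref{sec.hol.grpd}: near the identity bisection it is modelled on $\KR^2\times\KR^2$ with source $\bs(a,b,x,y)=(x,y)$ and target $\bt(a,b,x,y)$ given by the time–one flow of $\rho(a,b,\cdot,\cdot)$ starting at $(x,y)$. Integrating the decoupled ODE $\dot x=af(x)$, $\dot y=bf(x)$ yields
\begin{equation*}
\bt(a,b,x,y)=\Bigl(\phi(a,x),\; y+\tfrac{b}{a}\!\int_{0}^{a}\! f(\phi(u,x))\,du\Bigr),
\end{equation*}
where $\phi(\cdot,x)$ denotes the flow of $f(x)\dex$; note that the second coordinate extends smoothly to $a=0$ with value $y+bf(x)$.

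With $\bs$ and $\bt$ in hand, Theorem~B provides the key: the multiplicative Poisson structure on $\CG$ is uniquely determined by the pushforward conditions $\bs_*\pi_\CG=-\pi_2$ and $\bt_*\pi_\CG=\pi_2$, so it suffices to verify these two identities for the candidate bivector. The source condition is immediate because $\bs_*\dea=\bs_*\deb=0$, reducing it to $\bs_*(-f(x)\dex\wedge\dey)=-\pi_2$. The target condition is the substantive calculation: expanding $\bt_*\pi_\CG$ by the chain rule and collecting coefficients of $\partial_{x'}\!\wedge\partial_{y'}$ yields a single scalar relation between $\alpha(a,x)$ and the derivatives of $\bt$ (using $\partial_a\phi=f(\phi)$). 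This relation rearranges into a first–order linear ODE in $a$ for $\alpha(\cdot,x)$, and the smoothness requirement on the prefactor $\tfrac{b}{a}(1-\alpha)$ at $a=0$ forces the initial condition $\alpha(0,x)=1$. Assembling the resulting formula with the symplectic factor produces the expression on $\KR^4\times\KR^{2k}\times\KR^{2k}$ claimed in the theorem.

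The main obstacle is controlling the candidate bivector at the identity bisection $a=b=0$: the term $\tfrac{b}{a}(1-\alpha)$ is a priori singular, and one must check that the ODE solution satisfies $\alpha(a,x)=1+O(a)$ so that $(1-\alpha)/a$ is smooth across $a=0$. A secondary concern is multiplicativity of the output, but this is supplied \emph{gratis} by the uniqueness half of Theorem~B once the two pushforward relations have been verified.
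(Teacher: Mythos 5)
Your overall route coincides with the paper's: reduce to the two–dimensional factor (the $\pi_0$ block integrates by the pair groupoid exactly as you say), compute the source and target from the time–one flow of $a\,f(x)\dex+b\,f(x)\dey$, and then pin down the bivector on the chart $\CU\subset\KR^4$ by a pushforward condition, checking at the end that $\tfrac{b}{a}(1-\alpha)$ extends smoothly across $a=0$. The flow formulas you write are equivalent to the paper's $F,G$ (your $\int_0^a f(\phi(u,x))\,du=\phi(a,x)-x$), and your smoothness concern at the identity bisection is exactly the point the paper addresses via $\alpha(a,x)\to 1$.

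There is, however, a genuine gap in the uniqueness step. You propose to verify only the two marginal conditions $\bs_*\pi_\CG=-\pi_2$ and $\bt_*\pi_\CG=\pi_2$ and to conclude via Theorem B that the candidate is \emph{the} multiplicative Poisson structure, with multiplicativity ``supplied gratis.'' This is circular: Theorem B asserts uniqueness \emph{among multiplicative} bivectors satisfying those equations; it does not assert that every bivector satisfying them is the multiplicative one. Concretely, on the chart $\CU\subset\KR^4$ a bivector has six components, while $\bs_*\delta=0$ and $\bt_*\delta=0$ impose only two scalar constraints, so the two marginal conditions leave a four–parameter family of candidates at each point. What actually determines the bivector — and what the paper uses — is the \emph{joint} condition that $\varphi_\pi=\bt\times\bs\colon\CU\fto\CP(\KR^2)$ pushes $\pi_\CG$ forward to $\pi\oplus(-\pi)$; since $E$ is of full rank, $\varphi_\pi$ is a local diffeomorphism on the dense open set where $f\neq0$ (Lemma \ref{lem.regular.pair}), so this joint condition admits at most one solution, and the abstract existence of a multiplicative $\pi_\CG$ with this property (Theorem \ref{thm.e.symp.grpd0} and Lemma \ref{lem.comm.loc.poi}) closes the argument. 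In practice this means you must also check that the mixed components of $(\bt\times\bs)_*\pi_\CG$ (the $\dexp\wedge\dey$– and $\deyp\wedge\dex$–type terms) cancel, not just the two pure ones. A minor related point: once $F$ and $G$ are known, $\alpha$ is determined \emph{algebraically} as $-f(x)/G(a,x)$ from the $\dexp\wedge\deyp$ coefficient (using $\partial_x F=f(F)/f(x)$); the ODE in the statement is the flow equation for $F$, not a separate ODE for $\alpha$.
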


As a corollary of the theorem above, we provide a global symplectic realization for the $b^m$-Poisson structure $x^m\dex\wedge\dey+\pi_0$ in $\KR^2\times\KR^{2k}$ with $\pi_0$ symplectic.

  The integration in Theorem \hyperlink{thm.c}{C} corresponds to a Poisson groupoid integrating a bi-algebroid that is locally diffeomorphic to a holonomy groupoid and to a source-simply connected symplectic integration, as in \cite{CF03}. Details will be provided in section \ref{sec.int.ar}, \ref{sub.sub.section.b.sym} and \ref{sec.ar.poi}.

Moreover, in subsection \ref{sub.sub.section.cosymp}, we examine the special case of cosymplectic manifolds. In this context, we obtain a cosymplectic groupoid whose symplectization serves as the symplectic integration of the Poisson manifold underlying the original cosymplectic manifold. Notably, the cosymplectic groupoid studied here extends the work initiated in \cite{GMP11} but differs from the investigations conducted by Rui Loja Fernandes and David Iglesias Ponte \cite{FI23}. For the cosymplectic groupoids in this article, in contrast with \cite{FI23}, the identity bisection is not part of a single symplectic leaf; rather, it generates every other element in the groupoid through Hamiltonian flows.

The Poisson groupoid of a general $E$-symplectic manifold is described by the following theorem:

\begin{theoremD}
      The multiplicative Poisson structure $\pi_\CG$ in $\CG$ given by $\omega$ is a Poisson structure given by an $(\bs^{-1}(E))$-symplectic structure $\hat{\omega}$.   
\end{theoremD}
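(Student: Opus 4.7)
The plan is to construct a natural closed non-degenerate $2$-form $\hat\omega$ on the pullback Lie algebroid of $\bs^{-1}(E)$ and identify its inverse with $\pi_\CG$.

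Existence of $\pi_\CG$ uses that $\omega^\flat\colon\EA\to\EA^*$ is a Lie algebroid isomorphism (with $\EA^*$ carrying the bracket induced by $\pi_\omega$), so $(\EA,\EA^*)$ is a Lie bialgebroid whose integration yields the multiplicative Poisson on $\CG$ coinciding with the $\pi_\CG$ produced by Theorem B for the induced Poisson $\pi=\rho(\pi_\omega)$ on $M$. On the groupoid side, $\bs^{-1}(E)$ is almost regular with Lie algebroid
$$\widehat\EA=\bs^!\EA=\{(X,e)\in T\CG\oplus\bs^*\EA: d\bs(X)=\rho(e)\}\to\CG$$
of rank $2\,\mathrm{rank}(\EA)$ and anchor $\widehat\rho(X,e)=X$. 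There is a natural splitting $\widehat\EA\cong R\oplus L$ into the image $R$ of right-invariant sections $(\overrightarrow{f},0)$ with $f\in\bt^*\EA$ and the image $L$ of left-invariant sections $(\overleftarrow{e},e)$ with $e\in\bs^*\EA$, yielding $R\cong\bt^*\EA$ and $L\cong\bs^*\EA$.

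I would then set $\hat\omega:=\bt^*\omega-\bs^*\omega$, block-diagonal in the splitting with blocks $\bt^*\omega$ on $R$ and $-\bs^*\omega$ on $L$. Closedness is immediate since each summand is a pullback of $\omega$ along a Lie algebroid morphism and $d_\EA\omega=0$; non-degeneracy is clear from the block form and the invertibility of $\omega^\flat$. Thus $\hat\omega$ is an $\bs^{-1}(E)$-symplectic form with dual $\widehat\pi:=\widehat\rho(\hat\omega^{-1})$ that decomposes as a right-invariant bivector on $R$ plus a left-invariant bivector on $L$.

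To check $\widehat\pi=\pi_\CG$, note that since $R$ is source-vertical and $L$ is target-vertical, $\bs_*\widehat\pi=-\rho(\pi_\omega)=-\pi$ and $\bt_*\widehat\pi=\rho(\pi_\omega)=\pi$; multiplicativity is automatic because $\widehat\pi$ is the sum of a right-invariant and a left-invariant bivector with compatible value at the identity, so Theorem B forces $\widehat\pi=\pi_\CG$. The main obstacle is canonicality of the decomposition $\widehat\EA\cong R\oplus L$ at points where the anchor degenerates, since there source- and target-vertical distributions in $T\CG$ can fail to span transversally; I would handle this by defining $\hat\omega$ intrinsically on right- and left-invariant sections (where all identifications are canonical) and extending by $C^\infty(\CG)$-bilinearity, using almost regularity of $E$ to ensure such sections locally generate $\widehat\EA$ and that the smooth extension is unique.
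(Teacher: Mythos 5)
Your proposal is correct and follows essentially the same route as the paper: both identify the Lie algebroid of $\bs^{-1}(E)$ with $\bs^{!}\EA\cong\bt^*\EA\oplus\bs^*\EA$, equip it with the bivector $\pi_\omega\oplus(-\pi_\omega)$ (dually, the block form $\bt^*\omega-\bs^*\omega$), and note that this is non-degenerate, closed, and induces $\pi_\CG$ on $\CG$. One caveat: the final identification should not be routed through Theorem B, which concerns the groupoid integrating the Hamiltonian foliation of an \emph{almost regular Poisson} manifold and whose characterization of $\pi_\CG$ by $\bs_*(\pi_\CG)=-\pi$ and $\bt_*(\pi_\CG)=\pi$ rests on the almost-injectivity argument of Theorem \ref{thm.uniq.pi}; for an $E$-symplectic manifold not of full rank, $\CG$ is a different groupoid, the induced $\pi$ on $M$ need not be almost regular, and these two pushforward conditions alone do not single out a multiplicative bivector. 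The identification nevertheless goes through immediately via Theorem \ref{thm.e.symp.grpd0}, since your $\widehat{\pi}=\overrightarrow{\pi_\omega}-\overleftarrow{\pi_\omega}$ is exactly the formula $\rho^{\wedge 2}\varphi^{\wedge 2}(\pi_\omega\oplus-\pi_\omega)$ for the unique Mackenzie--Xu multiplicative Poisson structure, which is also how your worry about splitting $R\oplus L$ at points of anchor degeneracy is resolved: the splitting lives on the algebroid $\bs^{!}\EA$ via the everywhere-defined bundle isomorphism $\varphi$, not on $T\CG$.
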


The theorem above states that the Poisson structure $\pi_\CG$ is of the same ``type'' as $\pi$. A well-known case is that of $b^m$-symplectic manifolds: the groupoid integrating the $b^m$-foliation is also $b^m$-symplectic. However, the groupoid integrating the Hamiltonian vector fields is symplectic, as established in Theorem \hyperlink{thm.c}{C}. The desingularization procedure developed in \cite{GMW19} provides a way to connect these integrating groupoids, as discussed in Section~\ref{sec:b.desing}. Additionally, we examine the existence of Darboux-type normal forms for $E$-symplectic structures and explore the relationship between the existence of commutative frames and Darboux-Carathéodory normal forms.

\subsection*{Organization of this paper}
In Section \ref{sec.obj}, we review and summarize the objects of interest in this paper, with a particular focus on symplectic structures on almost regular foliations, also referred to as $E$-symplectic manifolds and almost regular Poisson manifolds. We provide examples and discuss key aspects of the groupoids that integrate these objects. \

Section \ref{sec.3} presents the main results of the paper. In Subsection \ref{sec.e.ar}, we state one of our principal results, Theorem A, which establishes a connection between symplectic structures on almost regular foliations, almost regular Poisson manifolds, and Lie bialgebroids. In Subsection \ref{sec.poi}, we discuss the integration of Lie bialgebroids into Poisson groupoids. Subsection \ref{sec.int.ar} offers a local description of the Poisson structure for almost regular Poisson manifolds, leading to Theorem B. Specifically, for $b^m$-symplectic manifolds and various other almost regular Poisson manifolds, we derive the symplectic groupoid integration, which is presented as Theorem \hyperlink{thm.c}{C}.\

In Section \ref{sec.ar.poi}, we outline a strategy for obtaining the symplectic groupoid integration of an almost regular Poisson manifold, applicable to cases such as $b^m$-symplectic, elliptic, and cosymplectic manifolds, among others. Finally, in Section \ref{sec.int.e}, we explore the case of $E$-symplectic manifolds, providing Theorem D and discussing its relation to Darboux forms, including the Darboux-Carathéodory theorem.

In the appendix, we explicitly describe the groupoid composition and inverse for the holonomy and source simply connected groupoid of a foliation in special charts, considering commutative frames.

\subsection*{Acknowledgments} We thank Camilo Angulo, Joel Villatoro and Marco Zambon for useful conversations regarding the organization of the paper and examples for the integration of almost regular Poisson manifolds.

%\subsection*{Structure of this paper}

%In \textbf{Section 2} we introduce the objects we will use in this paper and the basic idea of the groupoid that integrates them.

%In \textbf{Section 3} we start seeing relations between these objects and continue deriving explicit formulas for the Poisson structure of its integration. We also show some strategies to integrate the Poisson manifold, when singular foliation $E$ is of full rank such as the case of $b$, $b^m$, $c$, edge and elliptic or in the special non full rank case of cosymplectic manifolds. Our results give a unified treatment of the integration for a large set of Poisson structures.

%\section*{Aknowledgemenets}
%This work is supported by the Spanish State Research Agency AEI-10.13039-501100011033, through the Severo Ochoa and María de Maeztu Program for Centers and Units of Excellence in R\&D (project CEX2020-001084-M) and the grant PID2019-103849GB-I00. The author is also partially supported by the AGAUR project 2021 SGR 00603 Geometry of Manifolds and Applications, GEOMVAP.
\section{The objects}\label{sec.obj}
\subsection{Definitions}\label{sec.def}

We consider special cases of Poisson manifolds and of singular foliations. To be self-contained we include here both definitions:

\begin{defn}
    A Poisson manifold is a smooth manifold $M$ with a bivector field $\pi\in \gx^2(M)$ such that $[\pi,\pi]=0$ where the bracket is the canonical extension of the Lie bracket from vector fields to multi-vector fields given by Schouten and Nijenhuis.
\end{defn}

\begin{defn}
    A singular foliation (as in \cite{AS09}) is a locally finitely generated $\gi(M)$-submodule $E\subset \gx(M)$ such that $[E,E]\subset E$.
\end{defn}

These two represent geometric structures that control the dynamics on a manifold. On the one hand, any function $H \in \gi(M)$ on a Poisson manifold $(M,\pi)$ yields a vector field  $\pi(dH,-)\in \gx(M)$ governing the dynamics in $M$.

On the other hand, we can see the vector fields of a singular foliation $E$ as governing the dynamics on $M$ (alternatively they can also be seen as symmetries). 

The special cases we want to study are:

 \begin{defn} An almost regular foliation (coined by Debord in \cite{P01}) is a singular foliation $E\subset \gx(M)$ such that $E$ is a locally free module (or projective module). 
 \end{defn}

 \begin{rk}
     We will also refer to almost regular foliations as $E$-structures as done in \cite{MS21}.
 \end{rk}

 \begin{defn}
     An almost regular Poisson manifold (as in \cite{AZ17}) is a Poisson manifold $(M,\pi)$ such that the set of the Hamiltonian vector fields $E=\pi^\sharp(\gw(M))$ defines an almost regular foliation.
 \end{defn}
 
Any almost regular foliation $E$ has a vector bundle $\EA\fto M$ and a vector bundle map $\rho\colon \EA\fto TM$, called anchor which is injective on an open dense subset of $M$ and such that $E=\rho \gs(\EA)$. This implies that $E\cong \gs(\EA)$ and therefore the Lie bracket in $E$ induces canonically a Lie bracket on $ \gs(\EA)$. This means that $\EA$ has a more interesting structure, making it fit in the following definition.

\begin{defn} A {Lie algebroid} on a manifold $M$ is a vector bundle $A\fto M$ with a vector bundle map $\rho\colon A\fto TM$, called the anchor,  and a Lie bracket $[-,-]$ on $\gs(A)$ satisfying for any $X,Y\in \gs(A)$ and $f\in \gi(M)$ the following formula:
$$[X,fY]=df(\rho(X)) Y + f[X,Y].$$
\end{defn}

\begin{defn} An {ai-algebroid} (almost injective) is a Lie algebroid $A\fto M$ such that its anchor map $\rho$ is injective in an open dense subset of $M$. If ${\rm rnk}(M,A)=\dim(M)$ it is said to be of \textbf{full rank}.
\end{defn}

 The following statement proved in \cite{AZ13} and appearing here as a proposition shows a 1-1 correspondence, up to isomorphism, between almost regular foliations and ai-algebroids.

\begin{prop}
    Let $M$ be a manifold and $E\subset \gx(M)$ a singular foliation, the following three statements are equivalent.
    \begin{enumerate}
        \item $\dim(E/I_x E)$ has constant finite dimension $k\in \KN$ for all $x\in M$, where $I_x=\{ f\in \gi(M) \st f(x)=0\}$
        \item $E$ is an almost regular foliation such that, the minimal amount of generators for $E$ at any $x\in M$ is the constant $k\in \KN$,
        \item there is an ai-algebroid $\EA\fto M$ of rank $k\in \KN$ with $\rho(\gs(\EA))=E.$
    \end{enumerate}     
\end{prop}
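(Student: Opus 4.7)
The plan is to prove the cyclic implications $(1) \Rightarrow (2) \Rightarrow (3) \Rightarrow (1)$, with the unifying principle that $E/I_xE$ should be identified with the fibre at $x$ of a vector bundle whose module of sections is isomorphic to $E$.

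For $(1) \Rightarrow (2)$: fix $x \in M$ and lift a basis of $E/I_xE$ to elements $e_1,\dots,e_k \in E$. Because $E$ is locally finitely generated in the sense of the definition, a Nakayama-type argument at $x$ (applied to a finite local generating set whose images in $E/I_xE$ are spanned by the $e_i$, using that the residual change-of-basis matrix is invertible near $x$) shows that $e_1,\dots,e_k$ generate $E$ on some neighbourhood $U$ of $x$. The key step is then to verify $\gi(U)$-independence: any relation $\sum f_i e_i = 0$ on $U$ forces $f_i(y) = 0$ at every $y \in U$, since otherwise we could eliminate some $e_i$ at $y$ and produce fewer than $k$ generators of $E$ at $y$, contradicting the hypothesis $\dim(E/I_yE) = k$. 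Hence $E|_U$ is free of rank $k$, so $E$ is a locally free $\gi(M)$-module of rank $k$, i.e.\ an almost regular foliation whose minimal number of generators is $k$ at every point.

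For $(2) \Rightarrow (3)$: by the smooth Serre--Swan theorem a locally free $\gi(M)$-module of rank $k$ is $\gs(\EA)$ for a unique vector bundle $\EA \to M$ of rank $k$. The inclusion $E \hookrightarrow \gx(M) = \gs(TM)$ is an $\gi(M)$-module map and corresponds to a bundle morphism $\rho\colon \EA \to TM$, which plays the role of the anchor; the Lie bracket on $E$ transports to one on $\gs(\EA)$, and the Leibniz rule is inherited from $\gx(M)$. The delicate point is injectivity of $\rho$ on an open dense subset: on a trivialising chart $U$, generators $X_1,\dots,X_k$ that are $\gi(U)$-independent must be pointwise linearly independent on a dense open subset of $U$. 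Otherwise on some open $V \subset U$ every $k\times k$ minor of $[X_1,\dots,X_k]$ would vanish, and on any smaller open subset where the pointwise rank is locally constant the standard linear-algebra trick combined with a cutoff function yields a non-trivial $\gi$-linear relation among the $X_i$ on $U$, a contradiction. For $(3) \Rightarrow (1)$: injectivity of $\rho$ on an open dense $U \subset M$ implies that the $\gi(M)$-module map $s \mapsto \rho \circ s$ from $\gs(\EA)$ to $E$ is an isomorphism, since surjectivity is the hypothesis $\rho(\gs(\EA)) = E$ and if $\rho\circ s = 0$ then $s|_U = 0$ pointwise, hence $s = 0$. Consequently $E/I_xE \cong \gs(\EA)/I_x\gs(\EA) \cong \EA_x$, which is of constant dimension $k$.

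I expect the main obstacle to be the density step in $(2)\Rightarrow(3)$: turning $\gi$-independence of local generators into pointwise independence on a dense open subset requires stratifying by pointwise rank and globalising a locally-constant-rank relation via cutoffs, which is subtler than the otherwise mostly formal module-theoretic and Serre--Swan manipulations elsewhere in the argument.
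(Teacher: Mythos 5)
The paper does not actually prove this proposition --- it is imported from \cite{AZ13} (``The following statement proved in \cite{AZ13}\dots''), so there is no in-text argument to compare against. Your proof is correct and is the standard argument one expects behind that citation: Nakayama at $I_x$ to upgrade constant fibre dimension to local freeness, Serre--Swan plus transport of the bracket and anchor to produce the ai-algebroid, and the rank-stratification-with-cutoff argument to convert $\gi(U)$-independence of a local frame into pointwise independence on a dense open set. The step you single out as delicate is indeed the only one needing real care, and your sketch of it (pass to an open set of locally constant pointwise rank, take a smooth section of the kernel bundle, globalise by a bump function to contradict $\gi(U)$-independence) is sound, as is the converse direction via $E/I_xE\cong \Gamma(\EA)/I_x\Gamma(\EA)\cong \EA_x$.
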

\begin{defn}
    Let $E$ be an almost regular singular foliation. The \textbf{rank of $E$} is the rank of its ai-algebroid $\EA$. We say that \textbf{$E$ is of full rank} if $\EA$ is of full rank.
\end{defn}
 Every Poisson manifold $(M,\pi)$ inherently possesses a Lie algebroid structure given by $\pi^\sharp \colon T^*M \fto TM$ and therefore a singular foliation $E=\pi^\sharp(\gw(M))$ called the symplectic foliation.

Considering the reciprocal relation, if we begin with an almost regular foliation $E$ whose ai-algebroid is $\EA$ and a section $\pi \in \gs(\EA^{\wedge 2})$ with $[\pi,\pi]=0$, then $\pi$ induces a Poisson structure on $M$. A case of study in this article is when $\pi$ comes as the dual of a symplectic (closed and non-degenerate) form $\omega\in \gs((\EA^*)^2)$. 

Let us paraphrase the previous statement in a different view. Given an $E$-structure, by the Serre-Swan theorem, there is an \textbf{$E$-tangent bundle} ${}^E\!TM(={}^E\!A)$, whose sections (locally) are sections of $E$, and an \textbf{$E$-cotangent bundle} ${^E}TM^* := ({^E}TM)^*={}^E\!A^*$. We will refer to the global sections of $\wedge^p ({^E}TM^*)$ as \textbf{$E$-forms of degree $p$}, and denote the space of all such sections by ${^E}\Omega^p(M)$.

As $E$ satisfies the involutivity condition $[E, E] \subseteq E$, there is a differential $d: {^E}\Omega^p(M) \rightarrow {^E}\Omega^{p+1}(M)$ given by the Cartan-type (or Leibnitz-type) formula:

$$d\omega\scalebox{.85}{$(V_0, \dots,\! V_p)$} = \!\sum_i (-1)^iV_i\left(\omega\scalebox{.85}{$\left(V_0, \dots,\! \hat{V_i}, \dots,\! V_p \right)$} \right) + \sum_{i < j} (-1)^{i+j}\omega\scalebox{.85}{$\left([V_i, V_j], V_0, \dots,\! \hat{V}_i, \dots,\! \hat{V}_j, \dots,\! V_p\right)$}\,,$$
where the hat as in $\hat{V_i}$ represents a missing element.\\

The cohomology of this complex is the \textbf{$E$-cohomology} ${^E}H^*(M)$.\\ 

A closed non-degenerate $E$-form of degree 2 is called an \textbf{$E$-symplectic form}, and the triple $(M, E, \omega)$ is referred to as an \textbf{$E$-symplectic manifold}.

\begin{defn}
    Let $E$ be an almost regular foliation on the manifold $M$. An \textbf{$E$-symplectic} structure (as in as in \cite{MS21}) is a symplectic form $\omega\in \gs((\EA^*)^2)$. 
    %A singular $E$-symplectic structure is a closed 2-form $\omega' \in \gs((\EA^*)^2)$ that is non-degenerate in an open dense subset of $M$.
\end{defn}

For obstructions to the existence of $E$-symplectic structures, we suggest consulting \cite{K20} where the author discusses obstructions for symplectic structures on Lie algebroids. In the next subsection, we will also give several examples that inspired our work.

\subsection{Some motivating examples}

\begin{ex}\label{ex.ar.various} We start providing a list of examples of almost regular foliations:

\begin{enumerate}
    \item Let $M$ be a manifold and $Z$ a codimension 1 submanifold.
    \begin{enumerate}
        \item $M$ with $E^b$ being all the vector fields tangent to $Z$ gives the ai-algebroid ${}^{E^b}\!A={}^b TM$, called the $b$-tangent bundle.
        \item $M$ with $E^0$ being all the vector fields vanishing along $Z$ gives  ai-algebroid ${}^{E^0}\!A={}^0 TM$, called the $0$-tangent bundle. This vector bundle is used to study special cases of elliptic operators as can be seen in \cite{MU}.
        \item Let $\pi:Z\fto N$ be a submersion. On $M$ let $E^e\subset \gx(M)$ be all vector fields tangent to the fiber of $\pi$ in $Z$. $E^e$ gives the ai-algebroid ${}^{E^e}\!A={}^eTM$, called the edge-tangent bundle \cite{fine}. 
\end{enumerate}

    \item $\KR^2$ with $E$ generated by the rotations and the radial Euler vector field generate the elliptic Lie algebroid, which is an ai-algebroid. This algebroid has been studied by many authors, in particular its co-homology can be seen in 
    \cite{AW22}.
\end{enumerate}
\end{ex}

\begin{ex}(Regular foliations are almost regular foliations)
    An important example of a non-full rank $E$-manifold is the one of regular foliations. A regular foliation is a subbundle $F\subset TM$ such that $[\gs(F),\gs(F)]\subset \gs(F)$. Clearly, $F$ is a sub Lie algebroid of $TM$ and therefore an (almost) injective Lie algebroid.

    Moreover, any Lie algebroid with injective anchor is isomorphic to a regular foliation.
\end{ex}

\begin{ex}
    Given two almost regular singular foliations $(M_1,E_1)$ and $(M_2,E_2)$, the cartesian product $\left(\,M_1\times M_2\,,\, E_1\oplus E_2\subset \gx(M_1\times M_2)\,\right)$ is an almost regular foliation with ai-algebroids given by the Cartesian product as well.  
\end{ex}

\begin{ex}\label{ex.edge} Given any $b$-symplectic manifold the set of the Hamiltonian vector fields $E:=\gx_{\rm Ham}(M)=\pi^\sharp(\gw(M))$ is an almost regular foliation, as we showed in detail in Example \ref{ex.b.poi.edge}. \\
Moreover, if we ask that its singular subset $Z$ to be compact and the existence of an embedded compact symplectic leaf, then by \cite[Theorem 19]{GMP11} and \cite[Theorem 49]{GMP14}, we can describe $E$ as all the vector fields in $M$ tangent to a fibration $Z\fto S^1$ over a circle \footnote{In the original result $Z$ is a mapping torus $L\times_f I\fto S^1$ where $L$ is a compact symplectic leaf and $f$ is a symplectomorphism.} .This implies that $E$ is an edge structure as seen in part 1.(c) of example \ref{ex.ar.various} and in \cite{fine}.
\end{ex}

\subsection{The groupoid of an almost regular foliation}\label{sec.hol.grpd}

Here we want to highlight some key elements in the construction of the holonomy and source-simply connected groupoids associated to a almost regular foliation. We will give here a short introduction but for a more complete one, we refer to section 4 and 5 of \cite{GV22}.\\

The holonomy groupoid associated with any singular foliation is constructed in \cite{AS09}. Groupoids are often studied using paths (see \cite{CF03, CF04}). The groupoid can be described as paths in $E$ modulo holonomy, yielding an effective action. The source simply connected integration is introduced in \cite{GV22}. In \cite{P01}, it is proven that for almost regular foliations, this groupoid is a finite-dimensional smooth manifold, and thus a Lie groupoid.
\\

For any almost regular foliation $E$ on a manifold $M$:
\[{\rm Hol}(E)={\rm Paths}(E)/{holonomy} \y \CG(E)={\rm Paths}(E)/{(E-homotopy)}\]

These two groupoids are locally diffeomorphic and can be described using local charts that are then glued using a process similar to \cite[Theorem 3.4]{gualtieri}. These local charts are charts near the identity of both groupoids and any other element is reached by composition. Let us remark here bellow how any of the two groupoids will look locally near the identity:
\begin{itemize}
  
    \item For each $x\in M$, near the identity element at $x$, the holonomy (and source-simply connected) groupoid $\CG$ is locally diffeomorphic to an open neighbourhood $\CU\subset \KR^k\times M$ of $(0,x)$, where $k$ is the rank of $E$.
    
    \item Let $U$ the projection into $M$ of $\CU$. It is possible to find a diffeomorphism from the mentioned above such that the unit elements coincide with the zero section $\iota\colon U\fto \KR^k\times U$, the source $\bs\colon \CU\fto U$ with the projection to $U$ and the  target $\bt\colon \CU\fto U$ with the time 1 flow:
    $$\bt(v_1,\cdots,v_k,u)=\Phi^{v_1\rho(X_1)+\cdots+v_k(X_n)}_1(u),$$
    where $X_1,\cdots, X_k$ are any local sections of $\EA$ that are linearly independent in $U$.

\end{itemize}

To put these facts into context, we want to mention that in the original article \cite{AS09}, the quadruple $(\CU,\iota,\bs,\bt)$ are called path holonomy bisubmersions.  The holonomy and source simply connected groupoid can be constructed by ``gluing'' these objects. \\

Under this description, we know that $\CG$ looks like $\CU \subset \mathbb{R}^k \times M$ and how the identity bisection, the source, and the target will appear. Nevertheless, there is no clear way to explicitly write the groupoid composition and inverse. With an extra assumption, we have a way to express them using the following lemma.

\begin{thm}
\label{lem.comm.loc}
    If $X_1,\cdots,X_k$ commute under the Lie bracket, then the groupoid composition, and inverse of the holonomy and source simply connected groupoid $\CG$ is described in $\CU\subset \KR^k\times M$ by the normal addition and negative elements in $\KR^k$.
\end{thm}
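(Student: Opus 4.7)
The plan is to verify by direct computation that the formulas
$$(w,\bt(v,u))\cdot(v,u):=(v+w,u),\qquad (v,u)^{-1}:=(-v,\bt(v,u))$$
define morphisms of bisubmersions, and then invoke the universal property of the path-holonomy (resp. source-simply-connected) groupoid to conclude that they coincide with the groupoid operations.

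First I would record the key algebraic consequence of commutativity: writing $v\cdot X:=\sum_i v_iX_i$, the hypothesis $[X_i,X_j]=0$ implies that all the vector fields $v\cdot X$ commute, so their time-$1$ flows satisfy
$$\Phi^{(v+w)\cdot X}_1=\Phi^{w\cdot X}_1\circ\Phi^{v\cdot X}_1.$$
This is the only place commutativity is used, and it is precisely what is needed to check compatibility of the addition map with the target. Explicitly, for the proposed product, source-compatibility is trivial, while
$$\bt(v+w,u)=\Phi^{(v+w)\cdot X}_1(u)=\Phi^{w\cdot X}_1\bigl(\Phi^{v\cdot X}_1(u)\bigr)=\bt\bigl(w,\bt(v,u)\bigr).$$
For the inverse, $\bs(-v,\bt(v,u))=\bt(v,u)$ and $\bt(-v,\bt(v,u))=\Phi^{-v\cdot X}_1\Phi^{v\cdot X}_1(u)=u$. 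Unit compatibility, associativity, and the inverse identities then reduce to the abelian-group identities in $(\KR^k,+)$, and are immediate.

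The nontrivial step is to argue that these formulas actually \emph{are} the groupoid operations, not merely candidates compatible with $\bs$ and $\bt$. Using the bisubmersion framework of \cite{AS09} summarised in Section \ref{sec.hol.grpd}, the holonomy groupoid is obtained as a quotient in which any smooth map between bisubmersions that intertwines source, target, and the unit section is an equality in the quotient. The composition bisubmersion $\CU\times_{\bs,\bt}\CU$ sits over $\CG$, and the map sending the pair $\bigl((w,\bt(v,u)),(v,u)\bigr)$ to $(v+w,u)\in\CU$ is smooth, preserves $\bs$, $\bt$, and restricts to the identity on units; by the defining property of the quotient this map is the groupoid multiplication. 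The same argument applied to the opposite bisubmersion $\bar\CU$ (obtained by swapping $\bs$ and $\bt$) identifies inversion with $(v,u)\mapsto(-v,\bt(v,u))$. For the source-simply-connected version, one can equivalently realise $\CG(E)$ as $E$-homotopy classes of $E$-paths (cf.\ \cite{GV22}): the constant $E$-paths $t\mapsto v\cdot X$ and $t\mapsto w\cdot X$ compose, after reparametrisation, to an $E$-path that is $E$-homotopic to $t\mapsto(v+w)\cdot X$ via the affine homotopy $(s,t)\mapsto(v+sw)\cdot X$, and commutativity of the $X_i$ is exactly what guarantees the Maurer-Cartan-type condition for this to be a valid $E$-homotopy.

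The main obstacle is therefore conceptual rather than computational: making rigorous the step from ``the formula respects $\bs,\bt,\iota$'' to ``the formula \emph{is} the groupoid operation.'' Both avenues above (morphisms of bisubmersions, or explicit $E$-homotopies) resolve it, and the choice essentially dictates whether the statement is proved first for $\mathrm{Hol}(E)$ or for $\CG(E)$; the other then follows since the two groupoids are locally diffeomorphic in the chart $\CU$.
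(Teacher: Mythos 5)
Your argument is correct in substance but follows a genuinely different route from the paper's. The paper proves this via Proposition \ref{prop:comm.chart.grpd}: it first builds the abelian model groupoid $\CU'\subset\KR^k\times M$ with addition as composition (which is a Lie groupoid precisely because the flows of the $\rho(X_i)$ commute), observes that its Lie algebroid is isomorphic to $A|_{U'}$, and then invokes Lie's second theorem to integrate that isomorphism to a groupoid morphism $\CU'\fto\CG$ which is a local diffeomorphism near the identity bisection. You instead work inside the constructions themselves: for $\mathrm{Hol}(E)$ you exhibit the addition map $\CU\times_{\bs,\bt}\CU\fto\CU$ as a morphism of bisubmersions (source/target compatibility being exactly where $\Phi^{(v+w)\cdot X}_1=\Phi^{w\cdot X}_1\circ\Phi^{v\cdot X}_1$ enters) and use the defining quotient relation of \cite{AS09} to conclude that it \emph{is} the multiplication; similarly for the inverse via the opposite bisubmersion. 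This is the right mechanism — a morphism of bisubmersions does identify classes in the holonomy quotient — and it has the virtue of being self-contained and constructive, at the price of being tied to the specific presentation of $\mathrm{Hol}(E)$. The paper's Lie II argument is shorter and applies uniformly to \emph{any} source-simply-connected integration of $A$ without unwinding either quotient construction.

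One caveat: your explicit $E$-homotopy $(s,t)\mapsto(v+sw)\cdot X$ does not do what you claim — at $s=0$ it is the constant path $v\cdot X$, not the concatenation of $v\cdot X$ with $w\cdot X$, and $v\cdot X$ and $(v+w)\cdot X$ have different endpoints, so they cannot be homotopic rel endpoints. The correct homotopy is the standard one interpolating between the concatenation and the straight-line path (with commutativity guaranteeing the compatibility condition), but as written the formula is wrong. This does not sink the proof, since your closing remark is enough: the source-simply-connected groupoid covers $\mathrm{Hol}(E)$ by a groupoid morphism that is a diffeomorphism near the identity bisection, so the description of composition and inverse in the chart $\CU$ transfers from the holonomy case.
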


The proof of this theorem follows from Proposition \ref{prop:comm.chart.grpd} in the appendix. Both this theorem and Proposition \ref{prop:comm.chart.grpd}
are well-established facts about Lie algebroids, often considered folklore knowledge. We include them for the sake of self-containment, as we could not find a clear reference for them.\\

In the following sections, the notions of symplectic and Poisson structures compatible with the groupoid structure will be important, and a complete description of the composition will be key. This is why commutative frames will play a central role in the explicit characterization of the Poisson structure.

\section{Almost regular Poisson manifolds and $E$-symplectic manifolds}\label{sec.3}
In this section we study almost regular Poisson and $E$-symplectic structures as two cases of Lie bi-algebroids. We also study the canonical Poisson structure induced on the source simply connected groupoid integrating $\EA$. 

We want to remark that, by \cite[Theorem 4.3]{AZ17} its holonomy groupoid, which is a (discrete) quotient of the source simply connected groupoid integrating it (and therefore it is locally diffeomorphic), has a Poisson structure too.

\subsection{Relations between $E$-symplectic manifolds and almost regular Poisson manifolds}\label{sec.e.ar}

By the definition of almost regular Poisson manifold $(M,\pi)$ the symplectic foliation $E$ is almost regular and for its ai-algebroid $\EA$ there is a unique surjective morphism $\lambda: T^*M\fto \EA$ making the following commutative diagram:
    \[\begin{tikzcd}
     & \EA \arrow[dr,"\rho"] & \\
        T^*M \arrow[ur, "\lambda"] \arrow[rr, "\pi^\sharp"]  & & TM \\ 
    \end{tikzcd}\]
    
Remark that the existence of $\lambda$ is guaranteed by the following argument: The map at the level of sections $\pi^\sharp: \Omega(M)\rightarrow E$  is surjective and $\rho: \Gamma \EA \rightarrow E$ is $\gi(M)$-module isomorphism at the level of sections, then there is a unique map $\lambda: \Omega(M)\rightarrow \Gamma(\EA)$  which is surjective and $\gi(M)$-linear. This linearity, yields a surjective vector bundle map.

\begin{theoremA}\label{lem:esym.alps}\label{theorem.a}
\begin{enumerate} 
    \item Let $E$ be an ai-singular foliation with ai-algebroid $\EA\fto M$. Every $E$-symplectic structure induces canonically a Poisson bi-vector field $\pi\in \gx^2(M)$. If the $E$ is of full rank then the Poisson manifold is almost regular.
    \item Let $(M,\pi)$ be an almost regular Poisson manifold with symplectic foliation $E$, and ai-algebroid $\EA$. There is a closed form $\omega_{\pi}\in \gw^2(\EA)$ that is symplectic in an open dense subset of $M$. If $E$ is of full rank then $\EA\cong T^* M$ and $\omega_{\pi}\in \gw^2(\EA)$ is $\pi\in \gx^2(M)\cong \gw^2(T^*M)$.
\end{enumerate}
\end{theoremA}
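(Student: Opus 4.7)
The plan is to produce both directions by transporting structure along a natural bundle morphism: the anchor $\rho$ in part (1), and the surjection $\lambda:T^*M\fto \EA$ in part (2). In each direction the heart of the proof is to match the Schouten--Nijenhuis bracket on $M$ with a (co)chain-type computation on $\EA$, and to control rank and (almost) regularity by a determinantal / non-zero-divisor argument in a local frame.

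For part (1) I would first use non-degeneracy of $\omega\in \gw^2(\EA)$ to define the fibrewise inverse $\pi_\omega\in \gs(\EA^{\wedge 2})$, and then invoke the standard Lie-algebroid symplectic--Poisson correspondence to conclude that $d_\EA\omega=0$ forces $[\pi_\omega,\pi_\omega]_\EA=0$ for the Schouten--Nijenhuis bracket on $\EA$. Setting $\pi:=(\wedge^2\rho)(\pi_\omega)\in \gx^2(M)$, the fact that $\rho$ is a Lie algebroid morphism makes $\wedge^\bullet\rho$ a morphism of Gerstenhaber algebras on multi-sections, giving $[\pi,\pi]=(\wedge^3\rho)([\pi_\omega,\pi_\omega]_\EA)=0$, so $\pi$ is Poisson. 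For the regular / almost regular statement: if $\rho$ is fibrewise injective then $\pi^\sharp=\rho\circ\pi_\omega^\sharp\circ\rho^*$ has constant rank, so $\pi$ is regular; in the full-rank case I would identify the symplectic foliation $E':=\pi^\sharp(\gw(M))$ with the image of $\phi:=\pi_\omega^\sharp\circ\rho^*:T^*M\fto \EA$ via the module isomorphism $\rho:\gs(\EA)\to E$. Locally $\phi$ is a square matrix $B$ whose determinant is non-vanishing on an open dense set, and the adjugate-matrix argument (using that such a $\det B$ is not a zero-divisor in $\gi(M)$) shows that the columns of $B$ span a locally free submodule of $\gs(\EA)$ of rank $n$, so $E'$ is almost regular.

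For part (2) the map $\lambda:T^*M\fto\EA$ factoring $\pi^\sharp=\rho\circ\lambda$ comes from the diagram preceding the statement. I would define
\[\omega_\pi(\lambda\alpha,\lambda\beta):=\pi(\alpha,\beta),\qquad \alpha,\beta\in \gw(M),\]
and check: well-definedness, since $\lambda\alpha=0$ implies $\pi^\sharp\alpha=\rho(\lambda\alpha)=0$ and hence $\pi(\alpha,\cdot)=0$; $\gi(M)$-bilinearity and skew-symmetry, making $\omega_\pi\in \gs((\EA^*)^{\wedge 2})$; closedness, via the fact that $\lambda$ is a Lie algebroid morphism from the cotangent Lie algebroid of $(M,\pi)$ to $\EA$ (proved in \cite{AZ17}), so its dual $\lambda^*$ is a chain map from $(\gs(\wedge^\bullet\EA^*),d_\EA)$ to $(\gx^\bullet(M),[\pi,\cdot])$, and by construction $\lambda^*\omega_\pi=\pi$ so $\lambda^*(d_\EA\omega_\pi)=[\pi,\pi]=0$ --- and since $\lambda$ is a bundle isomorphism on the open dense set where $\rho$ is injective, $\lambda^*$ is injective on $3$-forms there, so $d_\EA\omega_\pi$ vanishes on that set and by smoothness on all of $M$; and non-degeneracy on the same open set, because $\omega_\pi(X,-)=0$ with $X=\lambda\alpha$ gives $\pi^\sharp\alpha=0$, $\rho(X)=0$ and hence $X=0$. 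In the full-rank case $T^*M$ and $\EA$ have the same rank $n=\dim M$ and $\lambda$ is an iso on an open dense set, so $\pi^\sharp$ is injective on sections; then $T^*M$ with anchor $\pi^\sharp$ satisfies the defining properties of the ai-algebroid of $E$, and by uniqueness $\EA\cong T^*M$, with $\omega_\pi$ corresponding to $\pi$.

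The step I expect to be hardest is verifying the Lie algebroid morphism property of $\lambda$, since the chain-map argument for closedness of $\omega_\pi$ rests on it. I would either appeal to \cite{AZ17} directly, or, in a self-contained treatment, verify the bracket compatibility on the open dense set where $\rho$ is an iso (there $\lambda=\rho^{-1}\circ\pi^\sharp$ and the cotangent/$\EA$-bracket compatibility is a direct computation) and then propagate to all of $M$ by continuity of the structure constants. A secondary delicate point is ensuring $\omega_\pi$ is smooth across the singular locus, but since its definition $(X,Y)\mapsto \pi(\alpha,\beta)$ is manifestly $\gi(M)$-tensorial in $X,Y$, smoothness follows automatically.
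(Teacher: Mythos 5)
Your argument is correct and, for part (1), essentially identical to the paper's: dualize $\omega$ to $\pi_\omega$, push it forward along $\rho$, and use that $\rho$ is a Lie algebroid morphism together with the equivalence $d\omega=0 \Leftrightarrow [\pi_\omega,\pi_\omega]=0$ (the paper's Lemma \ref{lem.d.pi}); your adjugate-matrix argument for almost regularity in the full-rank case just makes explicit what the paper compresses into the remark that $\rho^*$ is almost injective. In part (2) the construction of $\omega_\pi$ from surjectivity of $\lambda$ is again the paper's, but you establish closedness by a slightly different route: the paper appeals to Lemma \ref{lem.d.pi} (implicitly working on the dense set where $\omega_\pi$ is non-degenerate and extending by continuity), whereas you use that $\lambda$ is a Lie algebroid morphism, so that $\lambda^*$ intertwines $d_{\EA}$ with $[\pi,-]$, and then kill $d_{\EA}\omega_\pi$ by injectivity of $\wedge^3\lambda^*$; this is a clean alternative, and the morphism property you worry about is immediate by composing with $\rho$, which is injective on sections. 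One small slip: $\lambda$ is \emph{not} a bundle isomorphism on the dense set where $\rho$ is injective unless $E$ has full rank (its target has rank ${\rm rnk}\,\EA$, which in general is smaller than $\dim M$); but this is harmless, since fibrewise surjectivity of $\lambda$ --- which holds everywhere --- already makes $\wedge^3\lambda^*$ fibrewise injective everywhere, so your conclusion $d_{\EA}\omega_\pi=0$ holds on all of $M$ without any density argument.
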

\begin{proof}
    \begin{enumerate}
        \item Let $\omega\in \gw^2(\EA))$ be an $E$-symplectic structure. Because the 2-form $\omega$ is non-degenerate it can be dualized to a bivector $\pi_\omega\in \gs(\EA^{\wedge 2})$. The following map defines a bivector in $M$:

        \[\begin{tikzcd}
            T^*M \arrow[r,"\rho^*"] & (\EA)^* \arrow[r, "\pi_\omega^\sharp"] & \EA \arrow[r,"\rho"] & TM .
        \end{tikzcd}\]
This bivector is Poisson as a consequence of $\omega$ being closed (see Lemma \ref{lem.d.pi}).\\

If $A$ is of full rank, the dual map $\rho^*$ (which in local coordinates corresponds to the transpose) is almost injective and therefore $E=(\rho\circ\pi^\sharp_\omega\circ\rho^*)(\gw(M))$ is an almost regular foliation (see example \ref{ex.not.ar} for when it is not almost regular). 
    \item Because the bivector $\pi$ is by definition skew-symmetric, the dual of its musical morphism $\pi^\sharp\colon T^*M\fto TM$ satisfies $(\pi^\sharp)^*=-\pi^\sharp$. This implies the existence of the following commutative diagram of vector bundles:
    \[\begin{tikzcd}
     & \EA \arrow[dr,"\rho"] & \\
        T^*M \arrow[ur, "\lambda"] \arrow[rr, "\pi^\sharp"] \arrow[dr,swap, "-\rho^*"] & & TM \\
         & \EA^* \arrow[ur,swap, "\lambda^*"] 
    \end{tikzcd}\]
    The map $\lambda$ is surjective by construction; therefore, $\lambda^*$ is injective. Consequently, there exists a map $\omega_{\pi}^\sharp: \EA \to \EA^*$ making the diagram commute. The map $\omega_{\pi}^\sharp$ is skew-symmetric as $\pi$ is, i.e., $(\omega_{\pi}^\sharp)^* = -\omega_{\pi}^\sharp$, and therefore it corresponds to a 2-form $\omega_{\pi} \in \Gamma(\wedge^2 \EA^*)$. This $\omega_{\pi}$ is closed as a consequence of $\pi$ being Poisson (see Lemma \ref{lem.d.pi}). Moreover, $\omega_{\pi}$ is non-degenerate when $\rho$ is injective, i.e., on an open dense subset of $M$.
    If $E$ is of full rank then $\lambda$ is an isomorphism. 
    \end{enumerate}
       
\end{proof}
The ``full rank'' condition in part 1. of the lemma above is necessary as the reader can see in the following (counter) example:
\begin{ex}\label{ex.not.ar}
    Let $M=\KR^4$ with coordinates $x,y,z,w$ and the ai-singular foliation $E$ given by the vector fields $X_1=x\dex+w\dew$ and $X_2=y\dey+z\dez$. This is is given by the ai-algebroid $\EA=\KR^2\times M$ with anchor $\rho\colon \EA\fto TM$ sending $(e_i,p)\mapsto X_i(p)$ for $e_1,e_2$ the canonical basis of $\KR^2$ and $p\in M$.

    The dual map is given by $\rho^*\colon T^* M\fto \EA^*$ with 

    $$\begin{matrix}
        dx\mapsto xe_1^* & dy\mapsto ye_2^*\\
        dz\mapsto ze_2^* & dw\mapsto we_1^*
    \end{matrix},$$
    This image is not a projective submodule of $\gs(\EA^*)$, because the minimal amount of generators is $2$ everywhere except when one of the coordinates $x,y,z,w$ are zero.

  In $\EA$, we can have the (constant) symplectic bivector $\pi=e_1 \wedge e_2$, which corresponds to the symplectic form $\omega=e_1^* \wedge e_2^*$. The Poisson structure induced in $M$ is given by the assignment
    $$\begin{matrix}
        dx\xrightarrow{\rho^*} xe_1^*\xrightarrow{\omega^\sharp} -xe_2\xrightarrow{\rho} -xX_2 & dy\xrightarrow{\rho^*}ye_2^*\xrightarrow{\omega^\sharp} ye_1\xrightarrow{\rho} y X_1\\
        dz\xrightarrow{\rho^*} ze_2^*\xrightarrow{\omega^\sharp} ze_1 \xrightarrow{\rho} z X_1& dw\xrightarrow{\rho^*} we_1^*\xrightarrow{\omega^\sharp} -we_2\xrightarrow{\rho} -wX_2
    \end{matrix},$$
    which gives the (not almost regular) Poisson bivector:
    $$\pi:=-xy \dex\wedge \dey -xz \dex\wedge \dez +yw \dey\wedge \dew +zw\dez\wedge \dew=X_2\wedge X_1.$$
\end{ex}

We suggest that the reader refer to Example \ref{ex.b.poi.edge} for a detailed illustration of these relations.

To understand the integration of the ai-algebroids for $E$-symplectic and almost regular Poisson we can fit them inside the theory of Lie bi-algebroids. Let us recall this definition, see \cite{AD90} and the references within.

\begin{defn} A Lie bi-algebroid is a pair of Lie algebroids $A$ and $A^*$ such that the $A^*$-derivation $d_{A^*}\colon \gs^{\wedge k}(A)\fto \gs^{\wedge k+1}(A)$ satisfies Leibnitz with the bracket of $A$, i.e.
	$$d_{A^*}[X,Y]_A = [d_{A^*}X,Y]_{A}\pm [X,d_{A^*}Y]_A.$$
\end{defn}
The bracket of a Lie algebroid can be extended using Leibnitz identity to the exterior algebra $\gs(\wedge A)$ and it defines a De-Rham type differential on the dual exterior algebra $d\colon \gs(\wedge A^*)\fto \gs(\wedge^{+1} A^*)$.

 An $E$-symplectic manifold $(M,\EA,\omega)$ give us the Lie bi-algebroids $(\EA^*,\EA, d=[\omega,-])$ and its dual $(\EA,\EA^*, d^*=[\pi_\omega,-])$. Moreover, an almost regular Poisson manifold $(M,\pi)$ with ai-singular foliation $E'$ give us another Lie bialgebroid $(\EEA^*,\EEA,d=[\omega_{\pi},-])$ and its dual, which differential $d^*$ is not necessarily given by a bivector in $\EEA$. 

The following lemma can be seen as an exercise (in fact, it is exercise 12.1.19 in \cite{MK}). We are stating it here for the sake of self-containment.

\begin{lem}\label{lem.d.pi}
    Let $A\fto M$ be a Lie algebroid and $\pi\in \gs(A^2)$ such that $[\pi,\pi]=0$. Then $A^*$ is a Lie algebroid with:
    $$\rho'\colon A^*\fto TM; \alpha\mapsto \rho(\pi^\sharp(\alpha))$$
    $$[\alpha,\beta](X)=\rho'(\alpha)(\beta(X))-\rho'(\beta)\alpha(X)+[\pi,X](\alpha,\beta),$$
    The de Rham derivation in $A=(A^*)^*$ is exactly the graded Lie bracket $d_{A^*}=[\pi,-]$. Moreover, $\pi^\sharp\colon A^*\fto A$ is a Lie algebroid morphism.

    If $\omega\in \gs((A^*)^2)$ is non-degenerate with dual $\pi_\omega\in \gs(A^2)$ then $[\pi_\omega,\pi_\omega]=0$ if and only if $d\omega=0$. 
\end{lem}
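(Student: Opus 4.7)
The approach is to recognise that a Lie algebroid structure on $A^*$ is equivalent to a square-zero degree $+1$ derivation on $\gs(\wedge^\bullet A)$, verify that $d_{A^*}:=[\pi,-]$ is such a derivation, and then read off the stated anchor and bracket from Cartan-type formulas. The final equivalence for $\omega$ will follow from a pointwise identity relating $[\pi_\omega,\pi_\omega]$ and $d\omega$ through the musical isomorphism.

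First I would recall that the Schouten--Nijenhuis bracket on $\gs(\wedge^\bullet A)$ is the unique graded Lie extension of $\rho$ and of the bracket on $\gs(A)$, and that $[\pi,-]$ is a derivation of the wedge product of degree $+1$. Graded Jacobi together with $[\pi,\pi]=0$ yields $[\pi,[\pi,\xi]]=\tfrac{1}{2}[[\pi,\pi],\xi]=0$, so $d_{A^*}^2=0$. By the standard bijection between Lie algebroid structures on a bundle $B$ and differentials on $\gs(\wedge^\bullet B^*)$, applied with $B=A^*$, the differential $d_{A^*}$ produces a Lie algebroid structure on $A^*$ whose anchor and bracket are characterised by
\[
\langle d_{A^*} f,\alpha\rangle = \rho_{A^*}(\alpha)(f), \qquad
(d_{A^*}\xi)(\alpha,\beta) = \rho_{A^*}(\alpha)\xi(\beta) - \rho_{A^*}(\beta)\xi(\alpha) - \xi([\alpha,\beta]_{A^*}),
\]
for $f\in\gi(M)$, $\xi\in\gs(A)$, $\alpha,\beta\in\gs(A^*)$. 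Writing $\pi$ locally as $\sum X_i\wedge Y_i$ and computing the Schouten bracket gives $\langle [\pi,f],\alpha\rangle = \rho(\pi^\sharp\alpha)(f)$, so $\rho_{A^*}=\rho'$, and rearranging the second identity reproduces the bracket in the statement (with the sign of the last term fixed by the convention chosen for the Schouten bracket).

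For the morphism claim, $\rho\circ\pi^\sharp=\rho'$ is the definition of $\rho'$, and the identity $\pi^\sharp[\alpha,\beta]_{A^*}=[\pi^\sharp\alpha,\pi^\sharp\beta]_A$ I would verify by pairing both sides with an arbitrary $\gamma\in\gs(A^*)$, unwinding with the explicit bracket formula on $A^*$, and invoking graded Jacobi for the Schouten bracket together with $[\pi,\pi]=0$.

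Finally, assuming $\omega\in\gs(\wedge^2 A^*)$ is non-degenerate with dual $\pi_\omega$, so that $\pi_\omega^\sharp:A^*\to A$ is a bundle isomorphism, I would establish the pointwise identity
\[
[\pi_\omega,\pi_\omega](\alpha_1,\alpha_2,\alpha_3)\;=\;\pm\,2\,d\omega\bigl(\pi_\omega^\sharp\alpha_1,\,\pi_\omega^\sharp\alpha_2,\,\pi_\omega^\sharp\alpha_3\bigr),
\]
by expanding $d\omega$ via the Cartan formula on $A$, expanding the Schouten bracket in local generators, and using the defining identity $\omega(\pi_\omega^\sharp\alpha,-)=-\alpha$. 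Since $\pi_\omega^\sharp$ is a bundle isomorphism, one side vanishes if and only if the other does. The main obstacle I anticipate is keeping the signs consistent across the Cartan calculus on $A$ and $A^*$, the Schouten bracket, and the musical isomorphism, especially in this last identity; once the conventions are pinned down to those of the paper, each step is routine Lie algebroid calculus paralleling the classical case $A=TM$.
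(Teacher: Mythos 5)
Your proposal is correct, and it is the canonical argument: the paper itself gives no proof of this lemma, deferring to Exercise 12.1.19 of \cite{MK}, and the standard solution of that exercise is exactly your route --- identify $d_{A^*}=[\pi,-]$ as a square-zero degree-one derivation of $\gs(\wedge^\bullet A)$ via graded Jacobi and $[\pi,\pi]=0$, invoke the correspondence between such derivations and Lie algebroid structures on $A^*$, read off the anchor and (Koszul) bracket from the Cartan formulas, and close with the pointwise identity $[\pi_\omega,\pi_\omega](\alpha_1,\alpha_2,\alpha_3)=\pm 2\,d\omega(\pi_\omega^\sharp\alpha_1,\pi_\omega^\sharp\alpha_2,\pi_\omega^\sharp\alpha_3)$, which together with the invertibility of $\pi_\omega^\sharp$ gives the final equivalence. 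The only care needed is the sign bookkeeping you already flag; nothing in the argument has a gap.
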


%\begin{proof}(Sketch of the proof)
 %Let $\pi\in \gs(A^2)$, such that $[\pi,\pi]=0$. This implies that $A^*$ with the anchor and the bracket given is a Lie algebroids and $\pi^\sharp$ is a morphism of Lie algebroid, therefore
  % $$  d_{A^*}(\pi^\sharp(-))=[\pi,\pi^\sharp(-)]=\pi^\sharp( d_A(-)),$$
   
%If $\pi=\pi_\omega$ is the dual of a symplectic form $\omega$, evaluating $\omega$ in the formula above  one sees that $[\pi,\pi]=0$ implies $d\omega=0$.

%Moreover if we start with $d\omega=0$, the formula $d\omega=[\omega,\omega]=0$ implies that $\omega^\flat$ is a Lie algebroid morphism and also $\pi_\omega^\sharp$ then $[\pi_\omega,\pi_\omega]=0$.

%\end{proof}
 \subsection{Review on Poisson groupoids for $E$-structures}\label{sec.poi}
 
 Because of theorem 4.1 by Mackenzie and Xu in \cite{MX00}, for any Lie bi-algebroids $(A,A^*)$, with $A$ integrable by a source simply connected Lie groupoid $\CG$, there is a unique Poisson structure $\pi_\CG$ in $\CG$ which makes it into a Poisson groupoid with Lie bialgebroid $(A,A^*)$. By what we mentioned in subsection \ref{subsection:holgrpd}, the Lie algebroid associated to an $E$-type submodule is integrable. Therefore, any $E$-symplectic manifold and almost regular Poisson structure has an associated source simply connected Poisson groupoid. Let us review the definition of Poisson groupoid due to \cite{W88}:

\begin{defn}
     A multivector vield $\pi\in \gx^{\wedge k}(\CG)$ in a Lie groupoid $\CG$ is \textbf{multiplicative} if the corresponding map $\pi_\CG^\sharp\colon \wedge^{k-1}T^*\CG\fto T\CG$ is a a groupoid morphism.
 \end{defn}

 \begin{defn}
     A \textbf{Poisson groupoid} is a Lie groupoid with a \textbf{multiplicative} Poisson bivector $\pi_\CG\in \gx^{\wedge 2}(\CG)$.
 \end{defn}

 The condition of $\pi_\CG$ being multiplicative can be described with many equivalent statements, we suggest the reader to check proposition 3.3 of the original paper for the integration \cite{MX00} and the notes \cite{LMX11}. With the definition given here the proof of the  following lemma is straightforward:

\begin{lem}\label{lem.loc.poi.grpd}
    If $\CG$ is source connected, the multiplicative bivector $\pi_\CG$ is completely characterized by its values near the identity or equivalently $\pi_\CG^\sharp$ is characterized by its values near the zero section of $M$ in $T^* M$.
\end{lem}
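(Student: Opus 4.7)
The plan is to combine two ingredients. First, by Mackenzie--Xu \cite{MX00} (see also \cite{LMX11}), multiplicativity of $\pi_\CG$ is equivalent to the musical morphism $\pi_\CG^\sharp\colon T^*\CG \to T\CG$ being a morphism of Lie groupoids, from the cotangent groupoid $T^*\CG \rightrightarrows A^*$ into the tangent groupoid $T\CG \rightrightarrows TM$. Second, because $\CG$ is source-connected, a standard argument shows that every arrow $g \in \CG$ can be written as a finite product $g = g_1 \cdots g_n$ with each $g_i$ inside any prescribed open neighbourhood $U$ of the unit bisection $M \subset \CG$: pick a path joining $1_{\bs(g)}$ to $g$ in the source fiber $\bs^{-1}(\bs(g))$ and extract the factorisation by covering the image with finitely many translates of $U$.

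Combining these, I would proceed as follows. Given any covector $\alpha \in T_g^*\CG$, one produces composable $\alpha_i \in T_{g_i}^*\CG$ with $\alpha_1 \cdots \alpha_n = \alpha$; this is possible because the source and target of $T^*\CG$ are surjective submersions onto $A^*$, so $\alpha_2, \dots, \alpha_n$ can be chosen freely subject to composability and then $\alpha_1$ is fixed by the requirement that the total product reproduce $\alpha$. The groupoid-morphism property of $\pi_\CG^\sharp$ now gives
\[
\pi_\CG^\sharp(\alpha) \;=\; \pi_\CG^\sharp(\alpha_1) \cdot \pi_\CG^\sharp(\alpha_2) \cdots \pi_\CG^\sharp(\alpha_n),
\]
with composition taken in the tangent groupoid $T\CG$ and each $\pi_\CG^\sharp(\alpha_i)$ evaluated at a point of $U$. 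Since this composition is intrinsic to $\CG$ and independent of $\pi_\CG$, the map $\pi_\CG^\sharp$ is completely determined by its restriction to $T^*\CG|_U$, equivalently by its values on a neighbourhood of the zero section above $M$ in $T^*\CG$. The bivector $\pi_\CG$ itself is then characterised by its values on $U$.

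The point requiring the most care is producing the cotangent lift of the factorisation of $g$, as sketched above. An equivalent and perhaps more elementary route avoids this entirely by using the pointwise multiplicativity identity expressing $\pi_\CG(gh)$ as a combination of the pushforwards of $\pi_\CG(g)$, $\pi_\CG(h)$, and $\pi_\CG$ at the intermediate unit, under left and right translation; iterating this identity along $g = g_1 \cdots g_n$ expresses $\pi_\CG(g)$ in terms of $\pi_\CG(g_1), \dots, \pi_\CG(g_n)$ and $\pi_\CG|_M$, all evaluated in $U$, giving the same conclusion.
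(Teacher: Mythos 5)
Your proposal is correct and follows essentially the same route as the paper: the paper's proof simply asserts that $\pi_\CG^\sharp$, being a groupoid morphism out of a source-connected cotangent groupoid, is determined by its values near the identity bisection, and your factorisation of arrows into products of elements near the units (together with the cotangent lift) is exactly the standard argument behind that assertion. You merely spell out the details the paper leaves implicit, so no further comparison is needed.
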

\begin{proof}
   As $T^*\CG$ is source-simply connected, any morphism, and in particular $\pi_\CG^\sharp$ is characterized by its values near the identity bisection $\iota(T^*M)\subset T^*\CG$. Moreover, its linearity characterizes it in a neighbourhood of the zero section of $T^*M$. 
\end{proof}
 
The following result, confirms the result of Lemma \ref{lem.loc.poi.grpd}, as it reconstructs the Poisson structure from its Lie bi-algebroid (which only encodes information in the neighbourhood of $\iota(M)$). It is a consequence of the original construction of the Poisson groupoid for a Lie bi-algebroid done in \cite{MX00}. We recommend also checking section 3.2 of the Lecture notes \cite{LMX11} showing this construction.

\begin{thm}\label{thm.e.symp.grpd0} Let $(M,A, d^*=[\pi,-])$ be a Lie bi-algebroid for some $\pi\in \gs(A^{\wedge 2})$ and $\CG\soutar M$ the source simply connected groupoid integrating $A$. The pullback Lie algebroid $A_\CG=\bt^{!}A=\bt^*A {}_\rho \!\times_{d\bt} T\CG$ (do not confuse with the pullback vector bundle $\bt^*A$) and the vector bundle $A_\CG'=\bt^*A\otimes \bs^* A$ are isomorphic (as vector bundles) under the isomorphism:
$$\varphi: A'_\CG\fto A_\CG, (v,w,g)\mapsto (v,v\circ 0_g + 0_g \circ w^{d\tau}).$$

Moreover, the section $\pi_\CG'=\pi\oplus -\pi\in \gs((A_\CG')^{\wedge 2})$ and the section $\varphi^{\wedge 2}(\pi_\CG')\in \gs(A_\CG^{\wedge 2})$ gives the unique multiplicative Poisson structure $\pi_\CG$ of \cite{MX00} by the formula $\pi_\CG=\rho^{\wedge 2}\varphi^{\wedge 2}\pi_\CG'\in \gx^2(\CG)$, more explicitly: 
$$\forall g\in \CG \,\,\,\, \Rightarrow \,\,\,\, \pi_\CG|_g= \left((\pi|_{\bt(g)}) \circ_{d\mu^{\wedge 2}} 0_g^{\wedge 2} \right)- \left(0_g^{\wedge 2} \circ_{d\mu^{\wedge 2}}(\pi|_{\bs(g)})^{d\tau^{\wedge 2}}\,\right)\in T_g^{\wedge 2}\CG .$$
\end{thm}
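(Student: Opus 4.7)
The plan is to split the argument into two parts. First I would show that $\varphi$ is a vector bundle isomorphism, and then identify the resulting bivector $\pi_\CG$ with the Mackenzie-Xu multiplicative Poisson structure of \cite{MX00}. Geometrically, $\varphi$ encodes the decomposition of a vector in $T_g\CG$ as the sum of a right-translated vector coming from $T_{1_{\bt(g)}}\CG$ and a left-translated vector coming from $T_{1_{\bs(g)}}\CG$, using the identifications $A|_{\bt(g)} \cong \ker d\bs|_{1_{\bt(g)}}$ and (via $d\tau$) $A|_{\bs(g)} \cong \ker d\bt|_{1_{\bs(g)}}$.

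For well-definedness of $\varphi$ into $A_\CG = \bt^! A$, I would check the matching anchor condition. Since $\bt \circ R_g = \bt$ near $1_{\bt(g)}$, one has $d\bt(R_g(v)) = \rho(v)$; and since $w^{d\tau} \in \ker d\bt|_{1_{\bs(g)}}$, the left-translated piece $L_g(w^{d\tau})$ lies in $\ker d\bt|_g$. Thus $d\bt$ of the tangent component equals $\rho$ of the algebroid component. Bijectivity follows from a dimension count (using $\dim \CG - \dim M = \mathrm{rk}(A)$, both bundles have rank $2\,\mathrm{rk}(A)$) together with injectivity, which is immediate: the first component recovers $v$, after which the isomorphism $L_g \circ d\tau \colon A|_{\bs(g)} \to \ker d\bt|_g$ recovers $w$.

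For the Poisson structure, I would rewrite the explicit formula as $\pi_\CG|_g = \overrightarrow{\pi}|_g - \overleftarrow{\pi}|_g$, where $\overrightarrow{\pi}|_g = R_g^{\wedge 2}(\pi|_{\bt(g)})$ and $\overleftarrow{\pi}|_g = L_g^{\wedge 2}\bigl((d\tau)^{\wedge 2}\pi|_{\bs(g)}\bigr)$. Smoothness is clear from this description, and multiplicativity reduces to a direct computation on composable pairs $(g,h)$ using that $\overrightarrow{\pi}$ is right-invariant and $\overleftarrow{\pi}$ is left-invariant with respect to the tangent groupoid multiplication on $T\CG \rightrightarrows TM$. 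The contributions of $\overrightarrow{\pi}$ and $\overleftarrow{\pi}$ at $\iota(\bs(g)) = \iota(\bt(h))$ cancel; this cancellation is also the reason $\pi_\CG$ vanishes along the identity bisection.

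The main obstacle is verifying $[\pi_\CG, \pi_\CG] = 0$. Rather than attempt the direct Schouten bracket computation, which would require the full Lie bialgebroid compatibility between $A$ and $A^*$ and careful tracking of brackets between right- and left-invariant multivectors, I would invoke uniqueness: by \cite{MX00} there exists a unique multiplicative Poisson bivector on the source-simply connected $\CG$ inducing the bialgebroid $(A, A^*)$, and by Lemma \ref{lem.loc.poi.grpd} any multiplicative bivector is determined by its behaviour in a neighbourhood of the identity bisection. It then suffices to verify that the linearization of our $\pi_\CG$ along $M$ recovers $\pi \in \gs(A^{\wedge 2})$ and hence the differential $d^* = [\pi, -]$ on $\gs(\wedge A)$, which is a local computation at units using $R_{1_x} = L_{1_x} = \mathrm{id}$ and $d\tau|_{1_x} = -\mathrm{id}$ on $A|_x$.
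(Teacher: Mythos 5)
The paper itself does not prove this theorem: it is presented as a restatement of the construction of \cite{MX00} (the authors point to Section 3.2 of \cite{LMX11} for details), so there is no internal proof to compare against. Your overall route --- writing $\pi_\CG=\overrightarrow{\pi}-\overleftarrow{\pi}$ as a right-invariant extension minus a left-invariant extension, checking the anchor condition and the rank count for $\varphi$, establishing multiplicativity, and then invoking uniqueness of multiplicative bivectors on a source-(simply-)connected groupoid with prescribed infinitesimal data --- is precisely the strategy of the cited construction, and the part on $\varphi$ being a vector bundle isomorphism is fine.

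There is, however, a concrete error in your treatment of the unit bisection. You claim that $\overrightarrow{\pi}$ and $\overleftarrow{\pi}$ cancel at $\iota(x)$, that $\pi_\CG$ vanishes along the identity bisection, and that $d\tau|_{\iota(x)}=-\mathrm{id}$ on $A|_x$. All three statements hold only when $\rho=0$ (the Poisson--Lie group case). By the paper's own formula $V^{d\tau}=-V+d\iota(d\bt(V)+d\bs(V))$, on $A_x=\ker d\bs|_{\iota(x)}$ one has $d\tau(v)=-v+d\iota(\rho(v))$, hence
$$\overrightarrow{\pi}|_{\iota(x)}-\overleftarrow{\pi}|_{\iota(x)}=\sum_i f_i(x)\left(v_i\wedge\rho(w_i)+\rho(v_i)\wedge w_i-\rho(v_i)\wedge\rho(w_i)\right),$$
which is the (generally nonzero) expression of equation \ref{eq.poi.id}; indeed Corollary \ref{thm.sym.real.form} and the explicit $b$-symplectic computation show that in the full-rank case $\pi_\CG$ is symplectic, hence nowhere vanishing, along the units. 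This is not merely a side remark: your final step verifies that the linearization of $\pi_\CG$ along $M$ recovers $d^*=[\pi,-]$ ``using $d\tau|_{1_x}=-\mathrm{id}$ on $A|_x$,'' and with that incorrect identity the computation would return the Poisson--Lie group answer rather than the bialgebroid $(A,A^*)$. The verification must instead be done with the value \ref{eq.poi.id} at units, e.g.\ by reading off the induced Lie algebroid structure on the conormal bundle $N^*M\cong A^*$ of the coisotropic unit section, which is how \cite{MX00} recover the dual structure. With that correction the uniqueness argument goes through.
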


As a consequence of this theorem we obtain,

\begin{cor}\label{cor.uniqueness.of.pi}
    Let $(M,A,\rho:A\fto TM, d^*=[\pi,-])$ be a Lie bi-algebroid for some $\pi\in \gs(A^*)$ and $(\CG\soutar M,\pi_\CG)$ its a Poisson groupoid then $\rho^{\wedge 2}\pi=\bt_*(\pi_\CG)$ and $\rho^{\wedge 2}\pi=-\bs_*(\pi_\CG)$.
\end{cor}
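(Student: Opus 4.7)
The claim follows directly from the explicit formula for $\pi_\CG$ given in Theorem \ref{thm.e.symp.grpd0}, so the plan is to apply $d\bt^{\wedge 2}$ and $d\bs^{\wedge 2}$ termwise to
$$\pi_\CG|_g = \bigl(\pi|_{\bt(g)} \circ_{d\mu^{\wedge 2}} 0_g^{\wedge 2}\bigr) - \bigl(0_g^{\wedge 2} \circ_{d\mu^{\wedge 2}} (\pi|_{\bs(g)})^{d\tau^{\wedge 2}}\bigr).$$

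First I would address the left summand. Using the convention $A \cong \ker(d\bs)|_M$ with $d\bt|_A = \rho$, this summand is the right-translation $dR_g^{\wedge 2}(\pi|_{\bt(g)})$, since freezing the second factor of $\mu$ at $g$ gives $R_g\colon h \mapsto hg$. As $\bt \circ R_g = \bt$, one has $d\bt \circ dR_g = d\bt$, so $d\bt^{\wedge 2}$ sends this summand to $\rho^{\wedge 2}(\pi|_{\bt(g)})$; on the other hand $dR_g$ carries $\ker(d\bs)_{\bt(g)}$ into $\ker(d\bs)_g$, so $d\bs^{\wedge 2}$ annihilates it.

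The second summand is handled symmetrically. Freezing the first factor gives $L_g\colon k \mapsto gk$, which satisfies $\bs \circ L_g = \bs$, while the inversion $\tau$ swaps source and target in the sense that $d\bs \circ d\tau = d\bt$ at units. Moreover $dL_g$ maps into $\ker(d\bt)_g$ since $\bt \circ L_g$ is constant. Consequently $d\bt^{\wedge 2}$ kills this term, and
$$d\bs^{\wedge 2}\bigl(dL_g^{\wedge 2}(d\tau^{\wedge 2}(\pi|_{\bs(g)}))\bigr) = (d\bs \circ d\tau)^{\wedge 2}(\pi|_{\bs(g)}) = d\bt^{\wedge 2}(\pi|_{\bs(g)}) = \rho^{\wedge 2}(\pi|_{\bs(g)}).$$
Combining with the minus sign in the formula gives
$$d\bt^{\wedge 2}(\pi_\CG|_g) = \rho^{\wedge 2}(\pi|_{\bt(g)}), \qquad d\bs^{\wedge 2}(\pi_\CG|_g) = -\rho^{\wedge 2}(\pi|_{\bs(g)}),$$
and since the right-hand sides depend only on $\bt(g)$ and $\bs(g)$ respectively, they descend to well-defined bivectors on $M$, yielding $\bt_*(\pi_\CG) = \rho^{\wedge 2}(\pi)$ and $\bs_*(\pi_\CG) = -\rho^{\wedge 2}(\pi)$.

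The only real obstacle is bookkeeping: one must consistently track the identification between $A$ and $\ker(d\bs)|_M$ versus $\ker(d\bt)|_M$, and the sign conventions for $d\tau$ on $A$, which together determine which anchor kills which summand and where the overall minus sign appears. Once these are fixed consistently with Theorem \ref{thm.e.symp.grpd0}, the computation is purely linear-algebraic on each fiber $T_g\CG$ and requires no further argument.
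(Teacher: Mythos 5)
Your proposal is correct and follows exactly the route the paper intends: the corollary is stated as an immediate consequence of Theorem \ref{thm.e.symp.grpd0}, and your termwise application of $d\bt^{\wedge 2}$ and $d\bs^{\wedge 2}$ to its explicit formula (identifying the two summands as a right translation of $\pi|_{\bt(g)}$ and a left translation of the inverted $\pi|_{\bs(g)}$, each lying in the kernel of the opposite anchor) is precisely the computation the paper leaves implicit. The sign bookkeeping via $V^{d\tau}=-V+d\iota(d\bt(V)+d\bs(V))$ and $\bs\circ\tau=\bt$ is consistent with the conventions set just before Equation \eqref{eq.poi.id}.
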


Even though, after seeing these definitions, one expects to be able to write the Poisson structure in $\CG$ explicitly, the formulas depend on the groupoid structure and more precisely on the composition $\circ$. This composition is in many cases quite difficult to work with. In this article, we seek alternatives to explicitly obtain $\pi_\CG$.\\

In the following, we will describe this structure as accurately as possible for a neighbourhood of the zero section of $T^* M$. This is equivalent to expressing  $\pi_\CG$ near the identity bisection $\iota(M)\subset \CG$.\\

Let us write the bivector as $\pi=\sum_{i\in I} f_i (v_i \wedge w_i)$  for a finite set $I$ and some sections $v_i,w_i\in \gs(A)$ (every bivector in $A$ has this form locally). For any $x\in M$ and $V,W\in T_{\iota_x}\CG$ such that $d\bt(W)=d\bs(V)$ one can prove that the following formulas hold: 
$$V\circ_{d\mu} W= V+W-d\iota(d\bs(V)),$$
$$\Rightarrow V^{d\tau}=-V+d\iota(d\bt(V)+d\bs(V)).$$ 

Using the formula above, theorem \ref{thm.e.symp.grpd0} and the splitting $T\CG|_M= A\otimes TM$ it is possible to rewrite $\pi_\CG$ on the identity bisection as follows:
	\begin{equation}\label{eq.poi.id}
	\pi_{\CG_{\iota(x)}}=\sum_{i\in I} f_i(x) \left(\,\, v_i(x)\!\wedge\!\rho(w_i(x)) \,+\,\rho(v_i(x))\!\wedge\! w_i(x) \,-\, \rho(v_i(x))\!\wedge\!\rho(w_i(x))\,\right)
	\end{equation}
	for all $x\in M$.

We can use the equation \ref{eq.poi.id} to relate the types of singularities of the Poisson groupoid associated to a $E$-symplectic manifold or an almost regular Poisson structure, with the starting symplectic or Poisson structure. We will also try to expand this formula to a neighbourhood of the identity in special cases.

\subsection{The Poisson groupoid integrating the Hamiltonian vector fields for almost regular Poisson manifolds}\label{sec.int.ar}

Let $(M,\pi)$ be an almost regular Poisson manifold, with almost regular foliation $E=\pi^\sharp(\gw(M))$ and ai-algebroid $\EA$. Let $\lambda$ and $\rho$ be the maps $T^*M\xrightarrow{\lambda} \EA \xrightarrow{\rho} TM$ given in the proof of Theorem \hyperlink{thm.a}{A}, $\CG$ the source simply connected groupoid integrating $\EA$, $\CG^*$ the source simply connected groupoid integrating $\lambda^*\colon \EA^*\fto TM$ and $\omega_\pi\in \gw^{ 2}(\EA)=\gs((\EA^*)^{\wedge 2})$ the unique element such that $(\lambda^*\wedge \lambda^*) (\omega_\pi)=\pi$.

By construction the map $\lambda$ is surjective therefore the map $\lambda^*$ is injective. This means that $\CG^*$ is the monodromy groupoid of a regular foliation, which is well described in many books and articles such as \cite{H} and \cite{MM03}. Moreover, regular foliations have commutative frames (def \ref{def:comfrm}) and we can use the local charts in Proposition \ref{prop:comm.chart.grpd} to describe its groupoid and Poisson structure.\\

\begin{lem}\label{lem.comm.loc.poi} For any $p\in M$ let $U'$ be a neighbourhood, $\alpha_1,\cdots,\alpha_k\in \gs((\EA)^*)|_U$ be a commutative frame,
\begin{itemize}
    \item Let $\CG^*$ the source simply connected groupoid integrating $\EA^*$ i.e. the fundamental groupoid of the regular foliation given by $\EA^*$.
    \item Let $ \KR^k\times M \supset \CU \soutar U'$ be the groupoid given by Lemma \ref{lem.comm.loc} with diffeomorphism $\phi:\CU\fto \CG^*$.
\end{itemize}
Then
\begin{enumerate}
    \item There is only one Poisson structure $\pi_{\CG^*}$ in $\CG^*$ satisfying $(\bt\times\bs)_*\pi_{\CG^*}=\pi\oplus -\pi$, for the map $(\bt\times\bs)\colon \CG^*\fto M\times M$.
    \item Writing the bivector $\omega_\pi\in \gs((\EA^*)^{\wedge 2})$ locally at $p\in M$ as $\omega_\pi|_U=\sum_{i,\!j} f_{i,\!j} (\alpha_i\wedge \alpha_j)$ for $f_{i,\!j}$ smooth functions, the Poisson structure $\hat{\pi}_\phi$ in $\CU$ given below is the only one satisfying $\phi_*\hat{\pi}_\phi=(\pi_{\CG^*})|_{\CU}$:
    $$\hat{\pi}_\phi\scalebox{.85}{$(u)$}=\!\sum_{i,\!j} \left(f_{i,\!j}\scalebox{.85}{$(\bt(u))$}\!-\!f_{i,\!j}\scalebox{.85}{$(\bs(u))$}\right) \left(\scalebox{1.2}{$\frac{\partial}{\partial z_i}\!\wedge\! \frac{\partial}{\partial z_j}$}\right)\, +\, f_{i,\!j}\scalebox{.85}{$(\bs(u))$} \left(\scalebox{1.2}{$\frac{\partial}{\partial z_i}$}\!\wedge\!\rho\scalebox{.85}{$(\alpha_j)$}\!+\!\rho\scalebox{.85}{$(\alpha_i)$}\!\wedge\!\scalebox{1.2}{$\frac{\partial}{\partial z_j}$}\!-\!\rho\scalebox{.85}{$(\alpha_i)$}\!\wedge\!\rho\scalebox{.85}{$(\alpha_j)$}\right)\,,$$
     where $z_i$ are the coordinates of $\KR^k$ of $\CU\subset \KR^k\times M$. Moreover,
    the image of $((\bt\times\bs)(\CU),(\bt\times\bs)_*\hat{\pi}_\phi)$ is a Poisson submanifold of $(M\times M, \pi\oplus-\pi)$.
    \item $\pi_{\CG^*}$ and $\hat{\pi}_\phi$ are the Poisson structures given by Theorem \ref{thm.e.symp.grpd0}.
\end{enumerate}
\end{lem}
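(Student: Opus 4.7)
The plan is to establish the three items in order: item (1) gives uniqueness, item (2) verifies the explicit formula using this uniqueness, and item (3) identifies the Poisson structure produced by Theorem~\ref{thm.e.symp.grpd0} with the one from (1).

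For item (1), I first observe that since $\lambda\colon T^*M\to\EA$ is surjective by construction, its dual $\lambda^*\colon\EA^*\to TM$ is injective, so $\EA^*$ defines a regular foliation $\mathcal{F}$ and $\CG^*$ is its fundamental groupoid. Dualizing the commutative triangle in the proof of Theorem~\hyperlink{thm.a}{A} gives $\lambda^*\circ\omega_\pi^\sharp=\rho$, so the leaves of $\mathcal{F}$ coincide with the symplectic leaves of $\pi$. Hence $(\bt,\bs)\colon\CG^*\to M\times M$ is an immersion whose image lies inside the ``same symplectic leaf'' equivalence relation, and $\pi\oplus-\pi$ is tangent to this image. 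Injectivity of $d(\bt,\bs)$ forces any bivector pushing forward to $\pi\oplus-\pi$ to be uniquely determined; existence will come from item (3). The Poisson-submanifold claim for $(\bt\times\bs)(\CU)\subset M\times M$ then follows directly from this tangency together with the fact that an immersion is locally an embedded submanifold.

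For item (2), I use uniqueness from (1) and verify the pushforward identities $\bs_*\hat{\pi}_\phi=-\pi$ and $\bt_*\hat{\pi}_\phi=\pi$. The source is $\bs(z,m)=m$, so $d\bs$ kills every $\partial/\partial z_i$; only the term $-\sum f_{i,j}(\bs(u))\rho(\alpha_i)\wedge\rho(\alpha_j)$ survives, which equals $-\pi$ because $\pi=(\lambda^*\wedge\lambda^*)(\omega_\pi)$ with $\rho=\lambda^*$ the anchor of $\EA^*$. For the target, Proposition~\ref{prop:comm.chart.grpd} on commutative frames yields $\bt(z,m)=\Phi^{\sum z_i\rho(\alpha_i)}_1(m)$ with $d\bt(\partial/\partial z_i)|_u=\rho(\alpha_i)|_{\bt(u)}$; since the $\rho(\alpha_i)$ commute, the flow also preserves each of them, so $d\bt$ sends the $\bs$-lifts of $\rho(\alpha_j)$ to $\rho(\alpha_j)|_{\bt(u)}$. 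All four bivector building blocks in $\hat{\pi}_\phi$ therefore push forward to $\rho(\alpha_i)\wedge\rho(\alpha_j)|_{\bt(u)}$, and the coefficients collapse as $(f_{i,j}(\bt(u))-f_{i,j}(\bs(u)))+f_{i,j}(\bs(u))=f_{i,j}(\bt(u))$, giving $\pi|_{\bt(u)}$ as required.

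For item (3), I apply Theorem~\ref{thm.e.symp.grpd0} to the Lie bialgebroid $(\EA^*,\EA,d^*=[\omega_\pi,-])$ integrated by $\CG^*$, producing a canonical multiplicative Poisson structure. By Corollary~\ref{cor.uniqueness.of.pi} this structure satisfies $\bt_*=\pi$ and $\bs_*=-\pi$, so it coincides with the unique $\pi_{\CG^*}$ from (1); its restriction to $\CU$ then equals $\phi_*\hat{\pi}_\phi$ by the same uniqueness. The main obstacle is the $\bt_*$ computation in item (2): it is clean only because a commutative frame makes the flow of $\sum z_i\rho(\alpha_i)$ preserve each $\rho(\alpha_j)$, so that $d\bt$ acts transparently on both coordinate and lifted vector fields. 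The asymmetric coefficient $f_{i,j}(\bt(u))-f_{i,j}(\bs(u))$ is precisely the correction needed to promote equation~(\ref{eq.poi.id}), which only pins down $\pi_{\CG^*}$ on the identity bisection, to a formula valid throughout the chart.
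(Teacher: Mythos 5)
Your proof follows essentially the same route as the paper's: uniqueness from the fact that $\bt\times\bs$ is an immersion for the groupoid of the regular foliation $\EA^*$ (Lemma \ref{lem.regular.pair}), explicit verification that $\hat{\pi}_\phi$ pushes forward to $-\pi$ under $\bs$ and to $\pi$ under $\bt$ using $d\bs(\frac{\partial}{\partial z_i})=0$ and $d\bt(\frac{\partial}{\partial z_i})=\rho(\alpha_i)$, and identification with the canonical multiplicative structure via Corollary \ref{cor.uniqueness.of.pi}; your computation of $\bt_*$, exploiting that the commuting flows preserve each $\rho(\alpha_j)$, is in fact more detailed than the paper's ``one can check''. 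One inaccuracy: the leaves of the regular foliation $\lambda^*(\EA^*)$ do \emph{not} coincide with the symplectic leaves of $\pi$ in general --- for $\pi=x\dex\wedge\dey$ the regular foliation is all of $T\KR^2$ while the symplectic leaves include the single points of $\{x=0\}$ --- they coincide only on the open dense set where $\rho$ is injective. However, your tangency and Poisson-submanifold arguments only require the containment of the image of $\pi^\sharp$ in $\lambda^*(\EA^*)$, which does hold, so the conclusion is unaffected.
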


\begin{proof}
    Under the assumptions of Proposition \ref{prop:comm.chart.grpd} there is $d\bt(\frac{\partial}{\partial z_i})=\rho(\alpha_i)$ and $d\bs(\frac{\partial}{\partial z_i})=0$. By Lemma \ref{lem.regular.pair} there is at most one bivector $\hat{\pi}_\phi$ in $\CU$ satisfying $(d\bs)^2(\hat{\pi}_\phi)=-\pi$ and $(d\bt)^2(\hat{\pi}_\phi)=\pi$. One can check that the bivector given in the proposition satisfies this property.

    Moreover, because of Corollary \ref{cor.uniqueness.of.pi}, we have $\phi_*\hat{\pi}_\phi=(\pi_{\CG^*})|_{\CU}$ and using the groupoid structure on $\CU$ one can check that it is multiplicative.
\end{proof}

\begin{rk}
   In the proof above, even though uniqueness implies that $\hat{\pi}_\phi$ must be multiplicative, we recommend that the reader verify its multiplicativity using the explicit composition in $\CU$ (given by lemma \ref{lem.comm.loc}).
\end{rk}

Now that we have an understanding of the Poisson structure for $\CG^*$, we will describe the Poisson structure in $\CG$.
\begin{thm}\label{thm.uniq.pi}
    The multiplicative Poisson structure $\pi_\CG$ in $\CG$ given by $\pi$  is completely characterized by the equations $d\bs_*(\pi_\CG)=-\pi$ and $d\bt_*(\pi_\CG)=\pi$.
\end{thm}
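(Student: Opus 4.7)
The plan is to establish the uniqueness statement: any multiplicative Poisson bivector $\pi_\CG'$ on $\CG$ satisfying $d\bs_*\pi_\CG'=-\pi$ and $d\bt_*\pi_\CG'=\pi$ must equal $\pi_\CG$. The main tool will be the explicit formula \eqref{eq.poi.id} for the restriction of a multiplicative Poisson to the identity bisection, combined with the ai-algebroid assumption on $\EA$.

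The first step is to observe that Equation \eqref{eq.poi.id} applies to \emph{any} multiplicative Poisson, and not only to the specific one produced by Theorem \ref{thm.e.symp.grpd0}. This is a consequence of the linearization $V\circ_{d\mu}W = V + W - d\iota(d\bs V)$ at the identity: multiplicativity forces the $\EA\wedge TM$ and $TM\wedge TM$ components of $\pi_\CG'|_{\iota(x)}$ to be rigidly determined, via the anchor $\rho$, by the $\EA\wedge\EA$ component, which defines some bivector $\tilde\pi'\in\Gamma(\wedge^2\EA)$. Once this is in place, computing $d\bt_*$ and $d\bs_*$ on the splitting $T_{\iota(x)}\CG = \EA_x\oplus T_xM$ (noting that $d\bt$ acts as $(a,u)\mapsto \rho(a)+u$ and $d\bs$ as $(a,u)\mapsto u$) on the form \eqref{eq.poi.id} yields
\[ d\bt_*\pi_\CG'|_{\iota(x)} = \rho^{\wedge 2}(\tilde\pi'(x)) = -\,d\bs_*\pi_\CG'|_{\iota(x)}. \]
The hypotheses then read $\rho^{\wedge 2}(\tilde\pi') = \pi$. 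Because $\EA$ is an ai-algebroid, $\rho$ is fibrewise injective on an open dense subset, so $\rho^{\wedge 2}\colon\Gamma(\wedge^2\EA)\to\mathfrak{X}^2(M)$ is injective on sections (any element of its kernel would vanish on the dense locus, hence everywhere by continuity). Therefore $\tilde\pi'$ is uniquely determined by $\pi$ and coincides with the $\tilde\pi$ underlying $\pi_\CG$, giving $\pi_\CG'|_M = \pi_\CG|_M$.

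To upgrade this pointwise equality on $\iota(M)$ to equality on all of $\CG$, I would invoke Theorem \ref{thm.e.symp.grpd0}: a multiplicative Poisson on the source-simply connected groupoid $\CG$ is uniquely determined by its induced Lie bialgebroid on $(\EA,\EA^*)$, and that bialgebroid is encoded by the first-order data of the bivector along $\iota(M)$. Since both $\pi_\CG$ and $\pi_\CG'$ give rise to the same bialgebroid $(\EA,\EA^*,d^*=[\tilde\pi,-])$, Mackenzie--Xu uniqueness forces $\pi_\CG'=\pi_\CG$ globally. The main obstacle in this approach is the first step, namely justifying that \eqref{eq.poi.id} characterizes $\pi_\CG'|_{\iota(x)}$ for an arbitrary multiplicative Poisson; this is an infinitesimal-multiplicativity argument that should be doable in a few lines but requires some care. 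A cleaner alternative worth developing in parallel is to restrict to the open dense $U\subset M$ on which $\rho$ is an isomorphism: there $\CG|_U:=\bs^{-1}(U)\cap\bt^{-1}(U)$ is locally a pair groupoid integrating $TU$, on which multiplicative Poissons are classically of the form $\pi_U\oplus(-\pi_U)$, so the source/target conditions fix $\pi_\CG'$ on a dense open subset of $\CG$ containing a neighbourhood of $\iota(U)$; continuity of $\pi_\CG,\pi_\CG'$ together with Lemma \ref{lem.loc.poi.grpd} then conclude.
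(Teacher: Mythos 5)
Your main route has a genuine gap at its final step. Establishing $\pi_\CG'|_{\iota(M)}=\pi_\CG|_{\iota(M)}$ is zeroth-order information along the unit section, whereas the Lie bialgebroid that Mackenzie--Xu uniqueness requires you to match is first-order information: the bracket on $\gs(\EA^*)$ induced by a multiplicative bivector is the restriction to the conormal bundle of the cotangent Lie algebroid of $\pi_\CG'$, and this involves derivatives of the bivector transverse to $\iota(M)$. Poisson--Lie groups make the danger concrete: every multiplicative bivector on a Lie group vanishes at the identity, so they all agree ``at the units'', yet they induce different bialgebras and are genuinely different structures. Hence the sentence ``both $\pi_\CG$ and $\pi_\CG'$ give rise to the same bialgebroid'' does not follow from what you proved. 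Your first step is also imprecise: multiplicativity forces the $\wedge^2\EA$-component of $\pi_\CG'|_{\iota(x)}$ to \emph{vanish} (units of $T^*\CG$ must map to units of $T\CG$), forces the $\wedge^2 TM$-component to equal $\bs_*\pi_\CG'$, and identifies the $\EA\otimes TM$-component with the anchor $a_*\colon\EA^*\to TM$ of the induced algebroid on $\EA^*$ --- which is \emph{a priori} free data, not ``rigidly determined by the $\EA\wedge\EA$ component'' (that component is zero). It is pinned down only through the relation $a_*\circ\rho^*=\pi^\sharp$ together with density of the locus where $\rho^*$ is surjective; this repaired argument does yield $\pi_\CG'|_M=\pi_\CG|_M$, but it does not repair the jump from the unit section to all of $\CG$.

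The paper takes a different and essentially complete route: it integrates $\omega_{\pi}^\sharp\colon\EA\to\EA^*$ to a morphism $\varphi\colon\CG\to\CG^*$ onto the monodromy groupoid of the regular foliation $\lambda^*(\EA^*)\subset TM$; since $\omega_\pi^\sharp$ is almost injective, $\varphi$ is a local diffeomorphism on a dense open set, and on $\CG^*$ the bivector is pinned down by its source/target pushforwards because $\bt\times\bs$ is an immersion into the pair groupoid (Lemmas \ref{lem.regular.pair} and \ref{lem.comm.loc.poi}). Your ``cleaner alternative'' at the end is the same idea and is the route worth developing, but it needs two corrections: when ${\rm rnk}(\EA)<\dim M$ the restriction $\CG|_U$ is not a pair groupoid, only an immersed subgroupoid of $U\times U$ via $\bt\times\bs$, so the argument must be phrased as injectivity of pushforward along that immersion; and the separate conditions $\bs_*\pi_\CG'=-\pi$ and $\bt_*\pi_\CG'=\pi$ do not by themselves give $(\bt\times\bs)_*\pi_\CG'=\pi\oplus(-\pi)$ --- one also needs the cross-term $(d\bt\otimes d\bs)\pi_\CG'$ to vanish, i.e.\ the multiplicative identity $\{\bs^*f,\bt^*g\}=0$, which must be invoked explicitly.
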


\begin{proof}
   Let \(\varphi\colon \CG \to \CG^*\) be the Poisson morphism whose derivative near the identity is given by the map
\[
\omega_{\pi}^\sharp\colon \EA \to (\EA)^*,
\]
which is an almost injective Lie algebroid morphism. This implies that \(\varphi\) is a local diffeomorphism on an open dense subset of \(\CG\). Consequently, there is at most one Poisson structure \(\pi_\CG\) on \(\CG\) that pushes forward to \(\pi_{\CG^*}\), where \(\pi_{\CG^*}\) is the unique Poisson structure on \(\CG^*\) satisfying
\[
d\bs_*(\pi_{\CG^*}) = -\pi \quad \text{and} \quad d\bt_*(\pi_{\CG^*}) = \pi,
\]
as stated in Lemma \ref{lem.comm.loc.poi}.

\end{proof}
 
  The map $\varphi\colon \CG\fto \CG^*$  depends on each case, nevertheless, we know its derivative near the identity is $\omega_{\pi}^\sharp:\EA \fto (\EA)^*$.  Then we can write the Poisson structure in $\CG$ in the identity bisection regardless of the formulas defining $\varphi$, leading to the following result.

\begin{cor}\label{thm.sym.real.form}
    For any almost regular Poisson manifold, with almost symplectic bisection written locally as  $\omega_\pi|_U=\sum_{i,\!j} f_{i,\!j} (\alpha_i\wedge \alpha_j)$ for smooth functions $f_{i,\!j}\in \gi(U)$ and a commutative frame $\alpha_1,\cdots,\alpha_k\in \gs(\EA^*)$ then $\EA$ has local generators $X_1,\cdot,X_k$ satisfying $\alpha_i(X_j)= \delta_{ij}$ and $\rho(X_s)=\sum_j (f_{s,j}-f_{j,s}) \rho(\alpha_j)$. The Poisson bivector on its holonomy groupoid restricted to the identity bisection is given by:
    $$(\pi_\CG)|_{\iota(p)}= \sum_j {X_j}(p)\wedge \rho(\alpha_j)|_p - \sum_{i,\!j} f_{i,\!j}(p) (\rho(\alpha_i)|_p\wedge \rho(\alpha_j)|_p),$$ 
    Using the splitting $T\CG|_M= \EA \oplus_M TM$.\\
    This implies that $(\CG,\pi_\CG)$ is a regular Poisson structure of rank $2k$.\\
    
    Moreover, If $\EA$ is of full rank then $\pi_\CG$ is symplectic and $(\CG,\pi_\CG)$ is the symplectic integration of the Poisson manifold $(M,\pi)$.\\
\end{cor}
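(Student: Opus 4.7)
The plan is to reduce the corollary to a pointwise computation at the identity bisection, and to upgrade uniqueness to the rest of the groupoid by invoking Theorem \ref{thm.uniq.pi}. Throughout I work in the neighbourhood $U\subset M$ where the commutative frame $\alpha_1,\dots,\alpha_k$ is defined, and I use the canonical splitting $T_{\iota(p)}\CG = \EA_p \oplus T_pM$ together with the behaviour of $d\bs,d\bt$ along $\iota(M)$: on $\EA_p$ we have $d\bt=\rho$ and $d\bs=0$, while on $T_pM$ both restrict to the identity.

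The first step is to extract the anchor formula for the dual frame $X_s$, determined by $\alpha_i(X_j)=\delta_{ij}$. Contracting the local expression $\omega_\pi = \sum_{i,j} f_{i,j}\,\alpha_i\wedge\alpha_j$ with $X_s$ yields $\iota_{X_s}\omega_\pi = \sum_j(f_{s,j}-f_{j,s})\,\alpha_j$, which is $\omega_\pi^\sharp(X_s)\in\EA^*$. Composing with $\lambda^*$ and using the diagonal identity $\rho = \lambda^*\circ\omega_\pi^\sharp$ coming from the commutative diagram in the proof of Theorem \hyperlink{thm.a}{A} then delivers the anchor formula stated in the corollary.

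The second step is to verify the explicit expression for $\pi_\CG|_{\iota(p)}$ by computing $(d\bs)^{\wedge 2}$ and $(d\bt)^{\wedge 2}$ and checking that they agree with the pushforwards $-\pi|_p$ and $\pi|_p$ prescribed by Theorem \ref{thm.uniq.pi}. Applying $(d\bs)^{\wedge 2}$ kills every factor of $X_j$, leaving the correction term which equals $-\pi$ on the nose because Theorem \hyperlink{thm.a}{A} gives $\pi = (\lambda^*\wedge\lambda^*)(\omega_\pi) = \sum_{i,j} f_{i,j}\,\rho(\alpha_i)\wedge\rho(\alpha_j)$. Applying $(d\bt)^{\wedge 2}$ converts the leading sum into $\sum_j \rho(X_j)\wedge\rho(\alpha_j)$, and after substituting the anchor formula and antisymmetrizing in the $(i,j)$ indices, this combines with the correction term to return $\pi|_p$. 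An optional cross-check is to push the candidate formula forward along the Poisson morphism $\varphi:\CG\to\CG^*$ (whose derivative at identity is $\omega_\pi^\sharp$) and compare with the explicit bivector $\hat{\pi}_\phi|_{\iota(p)}$ produced by Lemma \ref{lem.comm.loc.poi}.

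For the rank and symplecticity statements, note that the leading sum $\sum_j X_j\wedge\rho(\alpha_j)$ pairs the $k$-dimensional vertical subspace $\EA_p \subset T_{\iota(p)}\CG$ with its image $\rho(\EA_p)\subset T_pM$, while the correction term lies entirely in $\wedge^2 TM$; hence on the dense set where $\rho$ is injective $\pi_\CG|_{\iota(p)}$ has rank exactly $2k$, and multiplicativity propagates this rank to all of $\CG$, proving regularity. When $\EA$ is of full rank we have $\dim\CG = \dim M + k = 2k$, so $\pi_\CG$ is nondegenerate, hence symplectic, and $(\CG,\pi_\CG)$ is the source-simply-connected symplectic integration of $(M,\pi)$. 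The main obstacle I expect is the bookkeeping in the $d\bt$ verification, where the antisymmetrization in $(i,j)$ must be performed consistently with the chosen sign convention for $\omega_\pi^\sharp$ in order to avoid spurious numerical factors; this is where a side-by-side comparison with Lemma \ref{lem.comm.loc.poi} is most useful as a sanity check.
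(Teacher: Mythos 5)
Your first step (deriving $\rho(X_s)=\sum_j(f_{s,j}-f_{j,s})\rho(\alpha_j)$ by contracting $\omega_\pi$ with $X_s$ and composing with $\lambda^*$) is fine and matches what the paper leaves implicit. The problem is your second step. You propose to establish the formula for $\pi_\CG|_{\iota(p)}$ by checking only that $(d\bs)^{\wedge 2}$ and $(d\bt)^{\wedge 2}$ send the candidate to $-\pi|_p$ and $\pi|_p$, and then invoking the uniqueness of Theorem \ref{thm.uniq.pi}. This does not pin down a bivector at $\iota(p)$: in the splitting $T_{\iota(p)}\CG=\EA_p\oplus T_pM$ one has $d\bs(X\oplus v)=v$ and $d\bt(X\oplus v)=\rho(X)+v$, so every element of the form $X\wedge\bigl(Y-d\iota(\rho(Y))\bigr)$ with $X,Y\in\EA_p$ lies in $\ker\bigl((d\bs)^{\wedge2}\bigr)\cap\ker\bigl((d\bt)^{\wedge2}\bigr)$, and this common kernel is nonzero whenever $k\geq 1$. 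Moreover, the uniqueness in Theorem \ref{thm.uniq.pi} is a statement about \emph{multiplicative} Poisson bivector fields on all of $\CG$; your candidate is only a bivector along the identity bisection, so you cannot conclude from that uniqueness that it coincides with $\pi_\CG|_{\iota(p)}$. What is actually missing from your two conditions is the cross term of $(\bt\times\bs)_*$, i.e.\ the full condition $(\bt\times\bs)_*\pi_{\CG^*}=\pi\oplus(-\pi)$ of Lemma \ref{lem.comm.loc.poi}(1), which determines the bivector because $\bt\times\bs$ is an immersion for the regular foliation $\EA^*$ (Lemma \ref{lem.regular.pair}).

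The step you relegate to an ``optional cross-check'' is in fact the paper's proof and the one that closes this gap: one restricts the explicit bivector $\hat\pi_\phi$ of Lemma \ref{lem.comm.loc.poi} to the identity bisection of $\CG^*$ using $T\CG^*|_M\cong \EA^*\oplus TM$, and then transports it to $\CG$ through the morphism $\varphi\colon\CG\to\CG^*$ whose derivative along the identity is $\omega_\pi^\sharp\oplus\mathrm{Id}$; since $\omega_\pi^\sharp$ is injective on an open dense subset, the relation $\varphi_*\pi_\CG=\pi_{\CG^*}$ determines $\pi_\CG|_{\iota(p)}$ there and hence everywhere by continuity. If you promote that comparison to your main argument (and track the sign/summation conventions for $\omega_\pi=\sum_{i,j}f_{i,j}\,\alpha_i\wedge\alpha_j$ carefully, since the antisymmetrization produces factors that must match the convention $(\lambda^*\wedge\lambda^*)(\omega_\pi)=\pi$), the rest of your write-up is essentially the paper's: the rank-$2k$ count from the pairing of $\EA_p$ with the independent vectors $\rho(\alpha_j)=\lambda^*(\alpha_j)$, and, in the full-rank case, nondegeneracy near $\iota(M)$ propagated to all of $\CG$ by multiplicativity and source-connectedness.
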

 
The importance of the above theorem is that it gives a formula for the bivector in the identity bisection and formulas that work for more general settings that can be extended to a neighbourhood of the identity in specific cases as we will do in the next section when we can write the map $\varphi$ explicitly. 

\begin{proof}

Let $\CG,\CG^*$ be the source-simply connected Lie groupoids integrating $A,A^*$ respectively. Let $p,\CU', U'$ as in Lemma \ref{lem.comm.loc.poi} the Poisson structure is
$$\phi^*\hat{\pi}\scalebox{.85}{$(u)$}=\!\sum_{i,\!j} \left(f_{i,\!j}\scalebox{.85}{$(\bt(u))$}\!-\!f_{i,\!j}\scalebox{.85}{$(\bs(u))$}\right) \left(\scalebox{1.2}{$\frac{\partial}{\partial z_i}\!\wedge\! \frac{\partial}{\partial z_j}$}\right)\, +\, f_{i,\!j}\scalebox{.85}{$(\bs(u))$} \left(\scalebox{1.2}{$\frac{\partial}{\partial z_i}$}\!\wedge\!\rho\scalebox{.85}{$(\alpha_j)$}\!+\!\rho\scalebox{.85}{$(\alpha_i)$}\!\wedge\!\scalebox{1.2}{$\frac{\partial}{\partial z_j}$}\!-\!\rho\scalebox{.85}{$(\alpha_i)$}\!\wedge\!\rho\scalebox{.85}{$(\alpha_j)$}\right)\,,$$
%where $\de_{z_i^x}=\phi^* \overleftarrow{X_i^*}$ and $\de_{z_i^y}=\phi^* \overleftarrow{Y_i^*}$.
If we restrict to $M$ and use the canonical isomorphism $T\CG^*|_M\cong A^* \oplus TM$ we have:
$$\phi^*\hat{\pi}(\iota(p))=\sum_{i,\!j} f_{i,\!j}(p) \left(\alpha_i\wedge \rho(\alpha_j) + \rho(\alpha_i)\wedge \alpha_j - \rho(\alpha_i)\wedge \rho(\alpha_j)\right),$$
$$\phi^*\hat{\pi}(\iota(p))=\sum_{i,\!j} (f_{i,\!j}-f_{j,i})(p) \left(\alpha_i\wedge \rho(\alpha_j) - \rho(\alpha_i)\wedge \rho(\alpha_j)\right),$$
The map $\varphi_{\omega}\colon \CG\fto \CG^*$ satisfies that $d\varphi|_M = \omega^{\sharp}\oplus Id: A\oplus TM\fto A^*\oplus TM$. Then the Poisson structure in $\CG|_M$ is 
$$\pi_\CG|_{\iota(p)}=\varphi^*\omega = \sum_j {X_j}(p)\wedge \rho(\alpha_j)|_p - \sum_{i,\!j} f_{i,\!j}(p) (\rho(\alpha_i)|_p\wedge \rho(\alpha_j)|_p).$$

If we assume the anchor on the dual is surjective (and therefore bijective), then $X_i,\rho(\alpha_i)$ are all the coordinates in $U$ (and $\CG$ is even dimensional). Then the bivector $\pi_\CG|_M$ is non-degenerate, therefore it is symplectic near $M$ and using that it is multiplicative, and $\CG$ is source connected then it must be symplectic everywhere.

\end{proof}

 As a consequence of Theorem \ref{thm.uniq.pi} and Corollary  \ref{thm.sym.real.form} we obtain,

\begin{theoremB}
 
    The multiplicative Poisson structure $\hat{\pi}$ in $\CG$ given by $\pi$ is a regular Poisson structure. Moreover, $\pi_\CG$ is completely characterized by the equations $\bs_*(\pi_\CG)=-\pi$ and $\bt_*(\pi_\CG)=\pi$.

\end{theoremB}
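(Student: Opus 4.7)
The plan is to assemble Theorem B directly from the two technical results already proved in this subsection, namely Theorem \ref{thm.uniq.pi} and Corollary \ref{thm.sym.real.form}. Since the statement of Theorem B has two parts (regularity and characterization), I would address them separately and in that order.

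For the characterization part, I would simply invoke Theorem \ref{thm.uniq.pi}, which asserts exactly that $\pi_\CG$ is the unique multiplicative bivector satisfying $\bs_*(\pi_\CG)=-\pi$ and $\bt_*(\pi_\CG)=\pi$. The argument there proceeds by constructing a Poisson morphism $\varphi\colon \CG\to \CG^*$ whose derivative at the identity bisection is the almost injective algebroid morphism $\omega_\pi^\sharp\colon \EA\to \EA^*$; since $\CG^*$ integrates the regular foliation $\EA^*\hookrightarrow TM$, uniqueness on $\CG^*$ (established via commutative frames in Lemma \ref{lem.comm.loc.poi}) transfers to $\CG$ because $\varphi$ is a local diffeomorphism on an open dense set. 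I would just cite this; no new argument is needed.

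For regularity, I would start from the explicit formula for $\pi_\CG|_{\iota(p)}$ given in Corollary \ref{thm.sym.real.form}. Using the splitting $T\CG|_M\cong \EA\oplus TM$ and the commutative frame $\alpha_1,\dots,\alpha_k$ together with its dual $X_1,\dots,X_k$, one reads off that $\pi_\CG^\sharp|_{\iota(p)}$ has image of dimension exactly $2k$ (the terms $X_j\wedge \rho(\alpha_j)$ contribute $2k$ linearly independent directions, and the correction term $-\sum f_{i,j}\rho(\alpha_i)\wedge\rho(\alpha_j)$ lies inside the span already produced). Hence the rank along $\iota(M)$ is the constant $2k$.

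To extend constancy of rank to all of $\CG$, I would exploit multiplicativity together with source connectedness of $\CG$: the map $\pi_\CG^\sharp\colon T^*\CG\to T\CG$ is a Lie groupoid morphism, so its image is a subgroupoid of $T\CG$ whose fiber dimension at a point $g$ is determined by the dimension at $\iota(\bs(g))$ and $\iota(\bt(g))$ through the formula of Theorem \ref{thm.e.symp.grpd0}; concretely, $\pi_\CG|_g$ is the sum of the left translate of $\pi|_{\bt(g)}$ and the right translate of $-\pi|_{\bs(g)}$ in $T_g^{\wedge 2}\CG$, whose ranges at $g$ each have rank $k$ and intersect only in the directions coming from $TM$, giving once more $2k$. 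The step I expect to be most delicate is showing precisely that these two translates do not acquire extra coincidences away from the identity, so that the rank does not drop; this can be handled by noting that left and right translations are transverse to each other along the source and target fibers, and that $\pi$ itself has rank exactly $k$ pointwise on $M$ (since $\EA$ has rank $k$ and $\omega_\pi$ is non-degenerate on an open dense set, which suffices after transporting through the algebroid). Once this rank computation is complete at every $g\in \CG$, regularity of $\pi_\CG$ follows.
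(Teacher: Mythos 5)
Your overall route is the same as the paper's: Theorem~B is obtained there simply by combining Theorem~\ref{thm.uniq.pi} (which gives the characterization by $\bs_*(\pi_\CG)=-\pi$ and $\bt_*(\pi_\CG)=\pi$ via the morphism $\varphi\colon\CG\to\CG^*$ and the commutative-frame description of $\CG^*$) with Corollary~\ref{thm.sym.real.form} (which gives the rank-$2k$ formula along $\iota(M)$). Your treatment of the characterization half and of the rank computation on the identity bisection is correct and matches the paper.

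The one place you go beyond citation is the extension of the rank from $\iota(M)$ to all of $\CG$, and that step as written has a gap. The principle you invoke --- that multiplicativity of $\pi_\CG^\sharp$ forces the rank at an arbitrary $g$ to agree with the rank on the units, because the left translate of $\pi|_{\bt(g)}$ and the right translate of $-\pi|_{\bs(g)}$ ``do not acquire extra coincidences'' --- is false for general multiplicative bivectors: for a (coboundary) Poisson--Lie group the same formula $\pi_G|_g=L_g\pi-R_g\pi$ gives rank $0$ at the identity and positive rank elsewhere, so transversality of left and right translations cannot by itself control the rank away from the units. Moreover, $\pi$ itself does \emph{not} have constant pointwise rank $k$ on $M$ (its rank drops on the singular locus --- that is the whole point of the almost regular setting), so the individual translates do not each have rank $k$ at every $g$; the constancy of ${\rm rank}\,\pi_\CG$ is a genuinely global phenomenon. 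The paper sidesteps this: it cites \cite{AZ17} for the regularity of $(\CG,\hat\pi)$, and in the full-rank case argues via openness of non-degeneracy near $\iota(M)$ together with multiplicativity and source-connectedness. If you want a self-contained argument in the general case, you should instead transport the explicit regular bivector $\pi_{\CG^*}$ of Lemma~\ref{lem.comm.loc.poi} through $\varphi$ and control the rank on the complement of the dense open set where $\varphi$ is a local diffeomorphism, rather than relying on translation-transversality.
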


The regularity of $(\CG, \hat{\pi})$ has already been proven in \cite{AZ17}. The significance of this theorem is that it provides explicit formulas for $\hat{\pi}$. If the source and target maps can be explicitly written for $\CG$, then it becomes possible to describe $\pi_\CG$. In the next section, we will specifically address the $b$-symplectic case.

 \subsection{The Poisson groupoid integrating the  Hamiltonian vector fields for a $b^m$-symplectic manifold.}\label{sub.sub.section.b.sym}

Consider a $b$-manifold $(M,Z)$. The set of vector fields tangent to $Z$ up to a certain order, say $m$, gives rise to a generalization of the $b$-tangent bundle, known as the $b^m$-tangent bundle. In particular, the entire construction of $b$-manifolds can be generalized to include $b^m$-symplectic manifolds, as Scott did in \cite{Scott} and later, with a global description in \cite{FPW23}. The $b^m$-counterparts have become interesting objects of study in quantization \cite{GMW21} and in investigating the convexity of toric actions \cite{GMW18}. A key observation, due to Guillemin, Miranda, and Weitsman, is that it is possible to desingularize $b^m$-symplectic manifolds \cite{GMW19}. In this section, desingularization plays a crucial role, as it allows us to identify the integrating groupoid as the limit of a desingularized construction, which can be interpreted as the integration of an almost regular Poisson manifold. Thus, all the pieces of the puzzle come together seamlessly.

In this section, we only consider $\KR^2$. Still, the results are valid for any $b^m$-symplectic manifolds since any $b^m$-symplectic manifold is locally isomorphic to the Cartesian product of a symplectic manifold with the examples given here.

\subsubsection{The case of $b$-symplectic manifolds}

In $\KR^2$ consider the $b$-Poisson bivector field $\pi=x\dex \wedge \dey$. This defines the singular foliation $E$ generated by $x\dex$ and $x\dey$. Let $E$ be the Lie algebroid of $E$. The Lie algebroids $E$ and $T^*M$ are isomorphic. Let $\CG(E)$ the Lie groupoid integrating $E$.  The groupoid $\CG(E)$ integrating $E$ is isomorphic to a neighbourhood $U\subset \KR^2\times \KR^2$ of $\{0\}\times \KR^2$ (see \cite{AZ17}). Moreover, under this isomorphism the source and target maps are given as follows:
$$\bs(a,b,x,y)=(x,y)\, \y \, \,\bt(a,b,x,y)=(xe^a, bx \, \e  (a)+y).$$
where $\e\colon \KR\fto \KR$ is a smooth non-vanishing function with derivatives given by the formulas
$${\e}(a)=\left\{\begin{matrix} \frac{e^a-1}{a} & \colon \neq 0 \\
1 & \colon   a=0 \end{matrix} \right. 
\,\,\y \,\,\,
{\e}^{(n)}(a)=\left\{\begin{matrix} \left(e^a-n\,\e^{(n-1)}(a)\right)/a & \colon \neq 0 \\
1/({n+1}) & \colon   a=0 \end{matrix} \right.$$
this name is due to the Lagrange mean value theorem and the exponential function. One can verify the derivatives of $\e$ using induction and l'Hôpital rule.

One can also check that $E^*\cong TM$ and the pair groupoid $\CP(\KR^2)=\KR^2\times \KR^2$ integrates it. Using the results of the previous sections one can check that the Poisson structure on $\CP(\KR^2)$ is $\hat{\omega}=\pi \oplus (-\pi)= x'\dexp\wedge\deyp -x \dex\wedge \dey$. And the map $\varphi_\pi\colon U\fto \CP(\KR^2)$ is given by the source and target maps
$(a,b,x,y)\mapsto(xe^a,bx\, \e(a)+y,x,y)$
so
$$\begin{matrix}
   \dea\mapsto xe^a\dexp+bx\, \e'(a) \deyp & \deb\mapsto x\, \e(a) \deyp\\
   \dex\mapsto e^a \dexp+b\, \e(a) \deyp+\dex & \dey\mapsto \deyp+\dey
\end{matrix}.$$

Therefore, the only bivector in $U$ under the push-forward to $\omega$ by the map $\varphi_\pi\colon U\fto \CP(\KR^2)$ is the following one:

$$\pi_\CG= \dea\wedge\dey +\frac{1}{\e(a)}\dex\wedge\deb+\frac{b(\e(a)-1)}{a\e(a)}\deb\wedge\dey -x\dex\wedge \dey$$

This bivector is Poisson and non-degenerate, therefore it defines a symplectic structure in $U$, moreover, in the identity, when $a=b=0$, it satisfies the formula of corollary \ref{thm.sym.real.form}. $(U,\pi_\CG)$ is how the symplectic groupoid of $(\KR^2,\pi)$ looks near the identity.

\begin{rk}
 For Poisson structures on $\mathbb{R}^2$, comparable results, albeit in a less explicit form,  are available in \cite[Section 7.5]{CF}.
\end{rk}

\subsubsection{The symplectic groupoid of $(\KR^2,f(x)\dex\wedge\dey)$ for $f$ with discrete zeros}\label{sub.sub.section.f.poi}

In this subsection, we generalize the previous result on $b$-symplectic structures for a broader setting, as stated in the following theorem.

\begin{theoremC} Let $M$ be a manifold of dimension $2n$ and $(\KR^2\times M\times \KR^k,\pi)$ be a Poisson manifold with Poisson structure written as:
$$\pi(x,y,p,v)= \left(f(x,v)\dex\wedge\dey \right)\oplus \pi_0(p,v)$$
with $(x,y,p,v)\in\KR\times\KR\times M\times \KR^k$, $\pi_0(p,v)$ symplectic in $M$ for all $v\in \KR^k$ and $f(x,v)=0$ only for $(x,v)=(0,0)$. 

Any groupoid integrating the Hamiltonian foliation (the holonomy or the fundamental ones) is a regular Poisson groupoid. It looks, near the identity bisection, like an open set $\CU\subset \KR^4\times M\times M\times \KR^k$, with source and target of any $(a,b,x,y,p,q,v)\in \CU$ given by:
$$\bs\scalebox{.8}{$(a,b,x,y,p,q,v)$}=(x,y,q,v)\, \y \, \,\bt\scalebox{.8}{$(a,b,x,y,p,q,v)$}=\left(F(a,x),b G(a,x) +y,p,v\right).$$

and its multiplicative symplectic vector field will look in $\CU$ as:
 {\footnotesize
    $$\scalebox{1.2}{$\pi_\CH$}\scalebox{.8}{$(a,\!b,\!x,\!y,\!p,\!q,\!v)$}= \left(\scalebox{1.1}{$\dea\!\wedge\!\dey$} +\alpha\scalebox{.85}{$(a,\!x,\!v)$}\scalebox{1.1}{$\deb\!\wedge\!\dex$}+\scalebox{1.2}{$\frac{b}{a}$}(1\!-\!\alpha\scalebox{.85}{$(a,\!x,\!v)$})\scalebox{1.1}{$\deb\!\wedge\!\dey$} -f\scalebox{.85}{$(x,\! v)$}\scalebox{1.1}{$\dex\!\wedge\!\dey$}\right)+ \scalebox{.9}{$\pi_0(p,\!v)\! -\!\pi_0(q,\!v)$},$$}

where:
$$\alpha(a,x,v):=\left\{\begin{matrix} -\frac{f(x,v)}{G(a,x,v)} & \st a\neq 0 \y x\neq x_0\\ 
1 & \st a=0 \,\,\,\,\text{   or   } \,\,\, \,x=0\end{matrix}\right.$$ 

the function $G$ is given by the formula:
$$ G(a,x,v):=\left\{\begin{matrix}
    \frac{(x-F(a,x,v))}{a} & \st a\neq 0 \\
    -f(x,v) & \st a=0 
\end{matrix}\right.$$

and $F(a,x,v)$ is the unique function satisfying the ODE:
$$\dea F(a,x,v)=f(F(a,x,v),v) \y F(0,x,v)=x.$$

For $k=0$ this groupoid is a symplectic integration of the Poisson manifold $(\KR^2\times M\times \KR^k,\pi)=(\KR^2\times M,\pi)$.
\end{theoremC}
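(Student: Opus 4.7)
The plan is to decompose the problem along the tensor factorization of $\pi$ and then invoke Theorem B as the uniqueness principle that pins down the Poisson structure. The $M$-factor $(M,\pi_0(\cdot,v))$ is fibrewise symplectic in $v$, so its source-simply connected integration is (locally near the identity) the pair groupoid with Poisson structure $\pi_0(p,v)-\pi_0(q,v)$; the $\KR^k$-factor consists of Casimirs and passes identically through both source and target. All the genuine content is therefore carried by the $\KR^2$-factor with Poisson bivector $f(x,v)\dex\wedge\dey$, so I must construct a groupoid chart there, exhibit a candidate bivector, and verify $\bs_*\pi_\CG=-\pi$ and $\bt_*\pi_\CG=\pi$; by Theorem B such a candidate is automatically the multiplicative Poisson structure.

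I would begin with the ai-algebroid of the Hamiltonian foliation on the $\KR^2$-factor. It has rank two and admits the commuting frame $X_1=f(x,v)\dex,\, X_2=f(x,v)\dey$, so by Theorem \ref{lem.comm.loc} and Proposition \ref{prop:comm.chart.grpd} the holonomy (equivalently source-simply connected) groupoid is locally diffeomorphic to an open $\CU\subset \KR^2\times\KR^2$ with source $\bs(a,b,x,y)=(x,y)$ and target given by flowing $aX_1+bX_2$ for unit time. Integrating $\dot x=af(x,v)$ gives $x(1)=F(a,x,v)$ by the defining ODE of $F$, while $\dot y=bf(x(t),v)$ combined with the change of variables $dt=dx/(af)$ produces $y(1)=y+(b/a)(F-x)=y+bG(a,x,v)$. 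A sign convention on the chosen generators then yields the explicit source and target maps of the statement.

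For the Poisson structure I would follow the template of Subsection \ref{sub.sub.section.b.sym}, now with a general $f$. The dual Lie algebroid $\EA^*$ is regular and is integrated by the pair groupoid of $\KR^2$ with Poisson structure $\pi\oplus(-\pi)$ by Lemma \ref{lem.comm.loc.poi}. The map $\varphi\colon \CU\to \KR^2\times\KR^2$, $(a,b,x,y)\mapsto(\bt,\bs)$, is a Poisson morphism, and computing $d\varphi$ on $(\dea,\deb,\dex,\dey)$ using $\partial_a F=f(F,v)$ together with $F+aG=x$ forces the unique bivector on $\CU$ with $\varphi_*\pi_\CG=\pi\oplus(-\pi)$ to be precisely the one displayed in the theorem, with the coefficient $\alpha=-f(x,v)/G(a,x,v)$ arising from inverting the contribution of $\dex$.

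Finally, for the $k=0$ case, the matrix of $\pi_\CG$ in the basis $(\dea,\deb,\dex,\dey)$ has determinant a monomial in $\alpha$ that is nonzero at $a=0$ by $\alpha(0,x,v)=1$, so $\pi_\CG$ is symplectic on a neighbourhood of the identity bisection; combined with multiplicativity and source connectedness this yields the desired symplectic integration of $(\KR^2\times M,\pi)$. The principal obstacle I foresee is the smooth extension of $G$ and $\alpha$ across $\{a=0\}$ and the zero set of $f(\cdot,v)$: both are defined by formally indeterminate quotients and must be justified by Taylor-expanding $F$ in $a$ using $\partial_a F=f(F,v)$, yielding $G=-f(x,v)+O(a)$ and $\alpha=1+O(a)$, which secures both smoothness and the correct push-forward behaviour near the singular locus.
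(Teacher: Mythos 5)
Your proposal is correct and follows essentially the same route as the paper: reduce to the $\KR^2$-factor with $\pi=f(x)\,\dex\wedge\dey$, obtain the chart, source and target from the time-one flow of $aX+bY$ via the commutative-frame results, integrate $E^*\cong TM$ by the pair groupoid with $\pi\oplus(-\pi)$, and pin down $\pi_\CG$ as the unique bivector pushing forward to $\pi\oplus(-\pi)$ under $\bt\times\bs$, checking non-degeneracy and the $a\to 0$ limits at the end. The only cosmetic difference is your choice of generator $X_2=f\,\dey$ versus the paper's $Y=-f\,\dey$ (which you flag as a sign convention), and your explicit appeal to Theorem B for uniqueness where the paper invokes the equivalent Lemma \ref{lem.comm.loc.poi} and Corollary \ref{thm.sym.real.form}.
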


\begin{proof}
It suffices the case when $M=\{*\}$ and $k=0$, therefore we will only consider $\KR^2$, but the results are valid for many other Poisson manifolds, in particular, any $b^m$-Poisson since they are locally isomorphic to the Cartesian product of a symplectic manifold with the example given here.\\

In $\KR^2$ consider the Poisson bivector field $\pi=f(x) \dex\wedge \dey$ for $f\in \gi(M)$ vanishing in $x=0$ (for discrete zeros the proceeding is the same). This defines an almost regular singular foliation $E$ generated by $X:=f(x)\dex$ and $Y:=-f(x)\dey$.

For any $(a,b)\in \KR^2$ the time 1 flow of the vector field $aX+bY$ starting at $(x,y)$ is given by the formula
$$\Phi^{aX+bY}_1(x,y)=\left(F(a,x),b G(a,x)+y\right)$$
where $F(a,x)$ is the unique function satisfying the ODE:
$$\dea F(a,x)=f(F(a,x)) \y F(0,x)=x,$$
and $G$ is given by the formula:
$$ G(a,x):=\left\{\begin{matrix}
    \frac{(x-F(a,x))}{a} & \st a\neq 0 \\
    -f(x) & \st a=0 
\end{matrix}\right. .$$
The maps $\bs$ and $\bt$ from $\KR^4$ to $\KR^2$ are given by:
$$\bs(a,b,x,y)=(x,y)\, \y \, \,\bt(a,b,x,y)=\left(F(a,x),b G(a,x) +y\right).$$

One can also check that $E^*\cong TM$ and the pair groupoid $\CP(\KR^2)=\KR^2\times \KR^2$ integrates it. Using the results of the previous sections, the Poisson structure on $\CP(\KR^2)$ is $\hat{\omega}=\pi \oplus (-\pi)= f(x')\dexp\wedge\deyp -f(x) \dex\wedge \dey$; in addition, the map $\varphi_\pi\colon U\fto \CP(\KR^2)$ is given by the source and target maps
$(a,b,x,y)\mapsto\left(F(a,b),bG(x,a)+y,x,y\right)$
so for $a\neq 0$ there is:
$$\begin{matrix}
   \dea\mapsto f(F(a,x))\dexp-\frac{b\left(G(a,x)+f(F(a,x))\right)}{a} \deyp & \deb\mapsto G(a,x) \deyp\\
   \dex\mapsto (\dex F(a,x)) \dexp+\frac{b}{a} (1-\dex F(a,x)) \deyp+1 \dex & \dey\mapsto 1\deyp+1\dey
\end{matrix}.$$

Note that $H:=\dex F$ is the solution of the differential equation:
$$\dea H(a,x)=f'(F(a,x))\cdot H(a,x) \y H(0,x)=1$$
then $H=f(F(a,x))/f(x)$ .

Therefore, the only bivector in $U$ mapped by the push-forward of $\varphi_\pi\colon U\fto \CP(\KR^2)$ to $\hat{\omega}$  is the following one:

$$\pi_\CG= \dea\wedge\dey +\alpha(a,x)\deb\wedge\dex+\frac{b}{a}(1-\alpha)(a,x)\deb\wedge\dey -f(x)\dex\wedge \dey$$

where  
$$\alpha(a,x):=\left\{\begin{matrix} -\frac{f(x)}{G(a,x)} & \st a\neq 0 \y x\neq x_0\\ 
1 & \st a=0 \,\,\,\,\text{   or   } \,\,\, \,x=x_0\end{matrix}\right.$$

When $a\mapsto 0$ or $x\mapsto x_0$ then $F(a,x)\mapsto x$, $\alpha(a,x)\mapsto 1$. Indeed $\pi_\CG$ is a smooth bi-vector field in an open neighbourhood $U$ of $\{(0,0,x,y)\st (x,y)\in \KR^2\}\subset \KR^4$. 

This bi-vector is Poisson and non-degenerate, therefore it defines a symplectic structure in $U$, moreover, in the identity, when $a=b=0$, it satisfies the formula of corollary \ref{thm.sym.real.form}. $(U,\pi_\CG)$ is how the symplectic groupoid of $(\KR^2,\pi)$ looks near the identity.
\end{proof}

\begin{cor}\label{cor.bm}
    For the manifold $\KR^2$ with Poisson structure $\pi=x^m\dex\wedge\dey$, the Poisson groupoid integrating it, is locally diffeomorphic to the manifold:
    $$ \CH:=\{(a,b,x,y)\in\KR^2 \st 1-(m-1)ax>0\}\subset \KR^4,$$
    with Poisson non degenerate structure for $a\neq 0$:
    $$\pi_\CG= \dea\!\wedge\!\dey +\left(\dfrac{a\, x^{m-1}}{\scalebox{.8}{$\big(\!\scalebox{.9}{$\sqrt[m-1]{1\!-\!(m\!-\!1)ax^{m-\!1}}$}\big)^{-1}\!-\!1$}}\right)\deb\!\wedge\!\dex+\left(\frac{b}{a}\!+\!\frac{-b\, x^{m-1}}{\scalebox{.8}{$\big(\!\scalebox{.9}{$\sqrt[m-1]{1\!-\!(m\!-\!1)ax^{m-\!1}}$}\big)^{-1}\!-\!1$}}\right)\deb\!\wedge\!\dey -x^m\dex\!\wedge\!\dey\,,$$
    and for $a=0$:
    $$\pi_\CG= \dea\!\wedge\!\dey +\deb\!\wedge\!\dex -x^m\dex\!\wedge\!\dey,$$
    
    and source and target given by the following surjective submersions $\bs,\bt\colon \CH\fto \KR^2$:
     $$\bt\scalebox{.85}{$(a,\!b,\!x,\!y)$}=\left(\,x\left(\!\scalebox{.9}{$\sqrt[m-1]{1\!-\!(m\!-\!1)ax^{m-\!1}}$}\right)^{-1},\,\,\, b\,x\,\scalebox{.9}{$\dfrac{1-\big(\!\scalebox{.9}{$\sqrt[m-1]{1\!-\!(m\!-\!1)ax^{m-\!1}}$}\big)^{-1}}{a}$}+y\,\right) \y \bs\scalebox{.85}{$(a,\!b,\!x,\!y)$}=(x,y), $$
  
\end{cor}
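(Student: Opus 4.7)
The plan is to specialize Theorem \hyperlink{thm.c}{C} to $M = \{*\}$, $k = 0$, and $f(x) = x^m$, and then make the auxiliary functions $F$, $G$, $\alpha$ explicit by solving the ODE that defines $F$. Theorem \hyperlink{thm.c}{C} already guarantees the existence of the Poisson groupoid, its local model as an open subset of $\KR^4$, the general shape of $\bs, \bt$, and the multiplicativity, smoothness, and nondegeneracy of $\pi_\CG$. Thus the corollary is essentially a bookkeeping computation.

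The first step is to solve $\partial_a F(a,x) = f(F(a,x)) = F(a,x)^m$ with $F(0,x) = x$ in closed form. For $m > 1$ this ODE is separable: integrating $F^{-m}\, dF = da$ and imposing the initial condition yields $F^{1-m} - x^{1-m} = -(m-1)a$, that is,
$$F(a,x) \;=\; x\,\bigl(1-(m-1)a\,x^{m-1}\bigr)^{-1/(m-1)}.$$
The right-hand side is a smooth function of $(a,x)$ precisely where the radicand is positive, which defines the open set $\CH \subset \KR^4$ stated in the corollary. Substituting $F$ into the second component of $\bt$ from Theorem \hyperlink{thm.c}{C} gives $b\,G(a,x) + y$ with $G(a,x) = (x - F(a,x))/a$, and a short algebraic simplification produces the closed form displayed for $\bt$. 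The source map $\bs(a,b,x,y) = (x,y)$ is trivial.

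The second step is to substitute $F$ and $G$ into $\alpha(a,x) = -x^m/G(a,x)$ and into the bivector formula of Theorem \hyperlink{thm.c}{C}. Direct computation gives
$$\alpha(a,x) \;=\; \frac{a\,x^{m-1}}{\bigl(1-(m-1)a\,x^{m-1}\bigr)^{-1/(m-1)}-1},$$
matching the stated coefficient of $\deb\wedge\dex$; the coefficient of $\deb\wedge\dey$ is then $(b/a)(1-\alpha)$, which collapses to the expression in the corollary. Plugging $f(x) = x^m$ into the final term $-f(x)\,\dex\wedge\dey$ completes the formula on the locus $\{a \neq 0\}$. The $a = 0$ specialization follows from the second branches of the piecewise definitions of $G$ and $\alpha$ (giving $G(0,x) = -x^m$, $\alpha(0,x) = 1$), which reduce the bivector to the simpler $\dea\wedge\dey + \deb\wedge\dex - x^m\,\dex\wedge\dey$ stated for $a = 0$.

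The only delicate point is the bookkeeping across the loci $a = 0$ and $x = 0$, where the intermediate expressions for $F$, $G$, and $\alpha$ present removable singularities; however, smoothness of the resulting $\pi_\CG$ on all of $\CH$ is already part of the conclusion of Theorem \hyperlink{thm.c}{C}, so no separate limit argument is needed here. Likewise, multiplicativity, the Jacobi identity, and regularity of $\pi_\CG$ are inherited directly from Theorem \hyperlink{thm.c}{C}, so the corollary is a pure specialization and no further verification is required.
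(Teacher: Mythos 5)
Your proposal is correct and follows essentially the same route as the paper: the paper's proof likewise just specializes Theorem C to $f(x)=x^m$ and states the resulting closed forms of $F$, $G$, and $\alpha$, while you additionally spell out the separable-ODE computation that produces $F(a,x)=x\bigl(1-(m-1)a\,x^{m-1}\bigr)^{-1/(m-1)}$. The only implicit caveat, shared with the paper, is that the formulas presuppose $m\geq 2$ (the $m=1$ case having been treated separately with $F(a,x)=xe^{a}$).
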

\begin{proof}
    Using Theorem \hyperlink{thm.c}{C}  with $f(x)=x^m$ then $F(a,x)=x\big(\!\scalebox{.9}{$\sqrt[m-1]{1\!-\!(m\!-\!1)ax^{m-\!1}}$}\big)^{-1}$, $G(a,x)=x\dfrac{1-\big(\!\scalebox{.9}{$\sqrt[m-1]{1\!-\!(m\!-\!1)ax^{m-\!1}}$}\big)^{-1}}{a}$ and $\alpha(a,x)=\dfrac{-a x^{m-1}}{1-\big(\!\scalebox{.9}{$\sqrt[m-1]{1\!-\!(m\!-\!1)ax^{m-\!1}}$}\big)^{-1}}$, we get the desired formulas.
\end{proof}

In particular, for $m=2$ we get the formulas:
$$\bt(a,b,x,y)=\left(\,\frac{x}{1-ax}\,, \frac{-bx^3}{1-ax}+y\, \right) \y \bs(a,b,x,y)\mapsto(\, x\,,y\,), $$
$$\pi_\CG= \dea\wedge\dey +(1-ax)\deb\wedge\dex+bx\deb\wedge\dey -x^2\dex\wedge \dey.$$
In this case, it is easy to see that in the identity, when $a=b=0$, it satisfies the formula of corollary \ref{thm.sym.real.form}. $(\CH,\pi_\CG)$ is how the symplectic groupoid of $(\KR^2,\pi)$ looks near the identity. Moreover, the source and target maps are surjective submersions to the whole $\KR^2$ so the source and target are global symplectic resolutions for the $b^2$-structure. Under this description, only the composition (and inverse) of the symplectic groupoid is unknown.\\

For $m=3$ (and the following ones), the formulas get more complicated but, we can still simplify them as:  
$$\bt\scalebox{.8}{$(a,b,x,y)$}=\left(\,\scalebox{.9}{$\dfrac{x}{\sqrt{1-2ax^{2}}}$}\,,\, bx\,\scalebox{.9}{$\dfrac{1-\big(\scalebox{.85}{$\sqrt{1-2ax^2}$}\big)^{-1}}{a}$}\!+\!y\,\right) \y \bs\scalebox{.8}{$(a,b,x,y)$}=(\,x\,,y\,)\,, $$
$$\pi_\CG= \dea\!\wedge\!\dey +\left(\scalebox{.75}{$\dfrac{1-2ax^2+\sqrt{1-2ax^2}}{2}$}\right)\deb\!\wedge\!\dex+b\left(x^2\!+\!\scalebox{.9}{$\frac{1-\sqrt{1-2ax^2}}{2a}$}\right)\deb\!\wedge\!\dey -x^3\dex\!\wedge\!\dey\,.$$

As seen in the example above for $m\geq 3$ it gets more difficult to verify that it satisfies all the properties. In the case of $m=3$ one can check smoothness and the correct limits for $a\mapsto 0$ by applying l'Hôpital's rule to the function $\scalebox{.8}{$\big(1-\big(\scalebox{.8}{$\sqrt{1-2ax^2}$}\big)^{-1}\big)$}/{a}$.
\subsubsection{The integrating groupoid for a desingularization of $b^{2k}$-symplectic manifolds}\label{sec:b.desing}

Can a singular symplectic structure be desingularized?
Recall from \cite{GMW19} the following result.
\begin{thm}[Guillemin-Miranda-Weitsman]

Given a $b^m$-symplectic structure $\omega$ on a compact manifold $(M^{2n},Z)$:
\begin{itemize}
\item If {$m=2k$}, there exists  a family of{ symplectic forms ${\omega_{\epsilon}}$} which coincide with  the $b^{m}$-symplectic form
    $\omega$ outside an $\epsilon$-neighbourhood of $Z$ and for which  the family of bivector fields $(\omega_{\epsilon})^{-1}$ { converges } in
    the $C^{2k-1}$-topology to the Poisson structure $\omega^{-1}$ as $\epsilon\to 0$ .
\item If {$m=2k+1$}, there exists  a family of { folded symplectic forms ${\omega_{\epsilon}}$} which coincide with  the $b^{m}$-symplectic form
    $\omega$ outside an $\epsilon$-neighbourhood of $Z$.

\end{itemize}

\end{thm}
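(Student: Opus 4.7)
The plan is to prove the theorem by reducing to a local normal form in a tubular neighborhood of $Z$, performing an explicit smoothing of the singular radial factor, and then checking the required structure and convergence properties.

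First, I would invoke the $b^m$-Darboux/Moser-type normal form (as developed for $b^m$-forms by Scott and later refined) to write the form, in a tubular neighborhood $\mathcal{U}\cong Z\times(-\delta,\delta)$ of $Z$, as
\begin{equation*}
\omega\big|_{\mathcal{U}} \;=\; \frac{dx}{x^{m}}\wedge\alpha \;+\; \beta,
\end{equation*}
where $x$ is a defining function of $Z$, $\alpha$ is (the pullback of) a closed 1-form on $Z$ determined by the modular data of $\omega$, and $\beta$ restricts to a symplectic form on the fibers transverse to $\tfrac{\partial}{\partial x}$. Since $Z$ is compact, a finite cover by such tubes together with a partition of unity reduces the problem to the local model above.

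Next, I would construct the deformation by replacing the singular factor $dx/x^{m}$ with an exact form $df_{\epsilon}$, where $f_{\epsilon}\in C^{\infty}(\mathbb{R})$ is chosen so that $f_{\epsilon}(x)=-\tfrac{1}{(m-1)x^{m-1}}$ for $|x|\geq\epsilon$ and interpolates smoothly across $x=0$. The parity of $m$ then dictates the outcome. For $m=2k$, the singular antiderivative $-1/((m-1)x^{m-1})$ is odd, so one can choose $f_{\epsilon}$ odd with $f_{\epsilon}'(x)>0$ everywhere; then $\omega_{\epsilon}:=df_{\epsilon}\wedge\alpha+\beta$, extended by $\omega$ outside $\mathcal{U}_{\epsilon}=\{|x|<\epsilon\}$, is closed and non-degenerate, hence symplectic. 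For $m=2k+1$, the antiderivative is even, so any smooth interpolation $f_{\epsilon}$ forces $f_{\epsilon}'$ to vanish transversally somewhere inside $\mathcal{U}_{\epsilon}$; this vanishing locus is a smooth hypersurface across which $\omega_{\epsilon}^{n}$ changes sign with a transverse zero, which is exactly the folded symplectic condition.

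For the convergence statement in the even case, I would compute the dual bivector. Outside $\mathcal{U}_{\epsilon}$ one has $\omega_{\epsilon}^{-1}=\omega^{-1}$ exactly, so convergence is trivial there; the work is entirely inside $\mathcal{U}_{\epsilon}$. Using the block form $\omega_{\epsilon}=df_{\epsilon}\wedge\alpha+\beta$, the inverse bivector takes the schematic form $\tfrac{1}{f_{\epsilon}'(x)}\,\partial_{x}\wedge(\text{dual of }\alpha)+(\text{inverse of }\beta)$. Since $\omega^{-1}$ behaves as $x^{m}\,\partial_{x}\wedge(\cdot)+\cdots$ near $Z$, and the interpolation allows one to match all derivatives of $x^{m}$ and $1/f_{\epsilon}'$ up to order $m-1=2k-1$ at $x=\pm\epsilon$ with a suitable bump construction, one obtains uniform $C^{2k-1}$-convergence on compact sets. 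The main obstacle is precisely this last step: designing $f_{\epsilon}$ so that $1/f_{\epsilon}'$ and its first $2k-1$ derivatives agree with $x^{m}$ and its derivatives at the matching radius $|x|=\epsilon$, and controlling these derivatives uniformly as $\epsilon\to 0$. I would handle this by taking $f_{\epsilon}'$ of the form $x^{-m}\cdot\chi(x/\epsilon)+g_{\epsilon}(x)$ with $\chi$ a cutoff and $g_{\epsilon}$ a carefully chosen polynomial correction, and then verifying the $C^{2k-1}$-estimates by direct differentiation and scaling in $x/\epsilon$.
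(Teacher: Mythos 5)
Your overall strategy is the one the paper sketches (following Guillemin--Miranda--Weitsman): put $\omega$ in a Laurent-type normal form near $Z$, replace the singular kernel $dx/x^{m}$ by $f_\epsilon'\,dx$ with $f_\epsilon'>0$ in the even case, and control $1/f_\epsilon'-x^{m}$ in $C^{2k-1}$. Two steps, however, would fail as written. First, you cannot reduce to the local model by ``a finite cover by such tubes together with a partition of unity'': convex combinations of symplectic forms are in general neither closed nor non-degenerate, so the gluing destroys the construction. The correct input is that the decomposition $\omega=\frac{dx}{x^{2k}}\wedge\bigl(\sum_{i=0}^{2k-1}\alpha_i x^i\bigr)+\beta$, with $\alpha_i$ closed forms on $Z$, holds \emph{semi-globally} on an entire tubular neighbourhood of the compact hypersurface $Z$ (Scott's $b^m$-Laurent/Mazzeo--Melrose decomposition, used by the paper); one then performs a single replacement of $dx/x^{2k}$ supported in that neighbourhood, with no patching of symplectic forms. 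Note also that the one-form factor is the polynomial $\sum_i\alpha_i x^i$ rather than a single closed $\alpha$; this does not change the argument, but your normal form is not quite the right one.

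Second, your conditions on $f_\epsilon$ in the even case are mutually inconsistent: if $f_\epsilon$ is odd and equals $-\tfrac{1}{(m-1)x^{m-1}}$ for $|x|\ge\epsilon$ with $m=2k$, then $f_\epsilon(-\epsilon)=+\tfrac{1}{(m-1)\epsilon^{m-1}}>0>f_\epsilon(\epsilon)$, so $f_\epsilon$ cannot be increasing. Since only $df_\epsilon$ enters $\omega_\epsilon$, the requirement should be $f_\epsilon'(x)=x^{-m}$ for $|x|\ge\epsilon$, allowing $f_\epsilon$ to differ from the antiderivative by a different constant on each side of $Z$ --- this is precisely why the profile $h$ in the paper carries the offsets $-2$ and $+2$ outside $[-1,1]$. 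With that repair your construction goes through, and the $C^{2k-1}$ estimate comes out more cleanly than from your cutoff-plus-polynomial correction: fix one odd increasing profile $h$ with $h'(x)=x^{-2k}$ for $|x|\ge 1$ and rescale, $h_\epsilon(x)=\epsilon^{-(4k-2)}h(x/\epsilon^2)$; then $g_\epsilon:=1/h_\epsilon'-x^{2k}$ is supported in $|x|\le\epsilon^2$ and a single scaling computation gives $\|g_\epsilon^{(j)}\|_\infty=O(\epsilon^{4k-2j})\to 0$ for $j\le 2k-1$. Your treatment of the odd case (the fold appearing where $f_\epsilon'$ must vanish, by parity) matches the intended argument, though you should still verify that $\omega_\epsilon$ restricted to the fold hypersurface has maximal rank $2n-2$, which the folded condition requires and you do not address; the paper itself only sketches the even case and defers the full proof to \cite{GMW19}.
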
 This desingularization process will be called \emph{GMW desingularization}.
The GMW desingularization  sheds light on the topological obstructions that a given manifold has in order to admit a $b^m$-symplectic structure:
\begin{itemize}
  
  \item Any $b^{2k}$-symplectic manifold admits a symplectic structure.
  \item Any $b^{2k+1}$-symplectic manifold admits a folded symplectic structure.
  \item The converse is not true: as {$S^4$} admits a folded symplectic structure but no $b$-symplectic structure \cite{Cannas}.
\end{itemize}

For the even case, $b^{2k}$ , the GMW-desingularization process assigns a family of symplectic structures. The idea of the proof is as follows

Write the $b^{2k}$-symplectic form as:
{\begin{equation}\label{00} \omega =\, \frac{dx}{x^{2k}}\!\wedge\left(\sum_{i = 0}^{2k-1}  \alpha_{i}x^i\right) \,+\, \beta \end{equation}}

 \begin{itemize}
\item  $h\in \mathcal{C}^{\infty}(\mathbb R)$ odd function s.t. $h'(x)>0$ for $x \in [-1,1]$, and such that outside $[-1,1]$,
\[h(x)=\begin{cases} \frac{-1}{(2k-1) x^{2k-1}}-2 &\textrm{for} \quad x<-1  \\[.5em] \frac{-1}{(2k-1) x^{2k-1}}+2 &\textrm{for} \quad x>1
\end{cases}.\]
under these assumptions, the function $h$ is injective, which is important because its inverse will be used later on.

\item Re-scale $h_\epsilon(x)=\frac{1}{\epsilon^{4k-2}}h(\frac{x}{\epsilon^2})$ on $\epsilon\in (-1,1)\backslash \{0\}$. This function does not converge for $\epsilon\mapsto 0$ but its derivative with respect to $x$ does.

The picture below depicts in red the graph\footnote{The red curve is a graph for $\frac{4}{\pi}\arctan(x)$, which has similarities with $h(x)$: smooth, same values in x=-1 and in x=1, same limits at infinity and with positive derivative for  $x\in(-1,1)$.} of the function $h$ . Its rescaled versions are shown in blue and green.

\begin{center}
\includegraphics[scale=0.13]{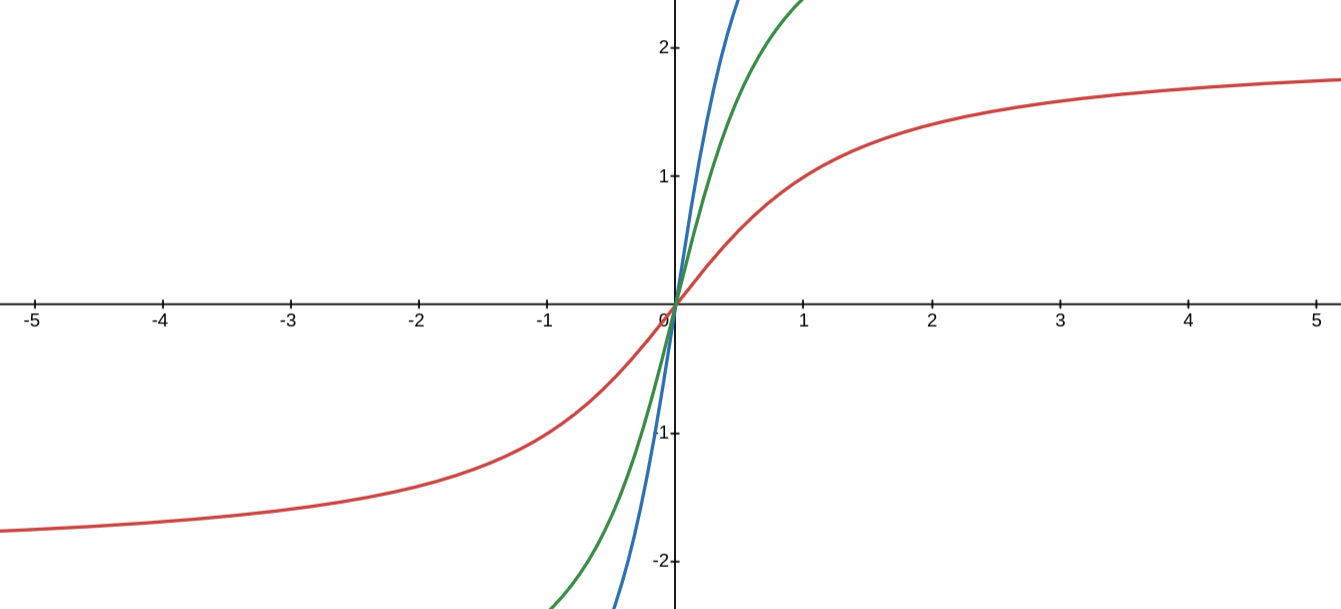}
\end{center}
\item Replace $\frac{dx}{x^{2k}}$ by $h_\epsilon' dx$  to obtain{${\omega_\epsilon} = h_\epsilon' dx\wedge(\sum_{i = 0}^{2k-1} \alpha_{i}x^i) + \beta$} which is symplectic for $\epsilon\neq 0$ and converges to the original $b^{2k}$-symplectic structure when $\epsilon\mapsto 0$ in the $C^{2k-1}$ topology.
\end{itemize}

Using the fact that \( h_\epsilon'(x) > 0 \), we define the function
\[
g_\epsilon(x) = \frac{1}{f_\epsilon'(x)} - x^{2k},
\]
for \(\epsilon \in (-1,1)\). This function satisfies the following properties:
\begin{itemize}
    \item \(g_\epsilon(x) \geq 0\) for \(x \in (-\epsilon^2, \epsilon^2)\),
    \item \(g_\epsilon(0) > 0\) when \(\epsilon \neq 0\),
    \item \(g_\epsilon(x) = 0\) for \(x \notin (-\epsilon^2, \epsilon^2)\),
    \item \(g_\epsilon(x)\) tends to zero in the \(C^{2k-1}\) topology as \(\epsilon \to 0\).
\end{itemize}

Moreover, we have
\[
dh_\epsilon(x) = \frac{dx}{x^{2k} + g_\epsilon(x)}.
\]

Below is an example graph illustrating a possible shape of \(g_\epsilon\):
\begin{center}
\includegraphics[scale=0.2]{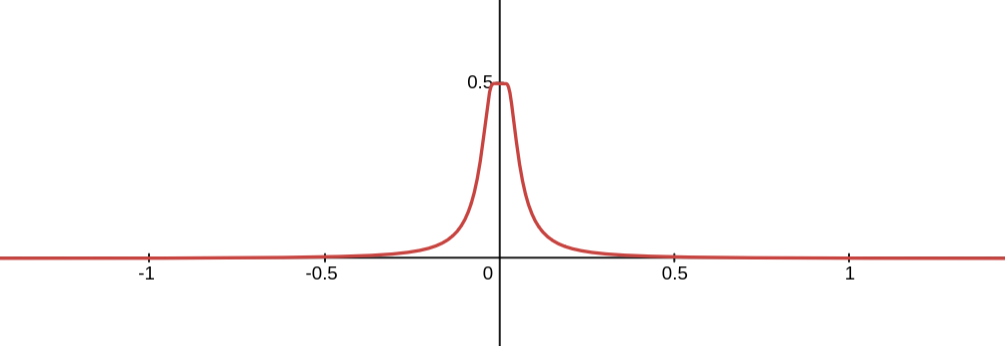}
\end{center}

The form $\omega_\epsilon$ can be written locally as

$${\omega_\epsilon} = \frac{dx}{x^{2k}+g_\epsilon (x)}\!\wedge\!\left(\sum_{i = 0}^{2k-1}  \alpha_{i}x^i\right) + \beta$$
And the Poisson bivector counterpart:
$$\pi_\epsilon = \left(\,x^{2k}+g_\epsilon(x)\,\right) \left(\dex_{\scriptscriptstyle\!1}\! \wedge\! \dey_{\scriptscriptstyle\!1}\right)\, +\, \pi_\beta$$
We now prove the following theorem that connects the integrating structure described above with the desingularization.

\begin{thm} Let $(M, \omega_\epsilon)$ be the desingularization of a $b^{2k}$-symplectic manifold. Denote by $\pi_\epsilon$ the dual Poisson structure. Then:

\begin{enumerate}
    \item The pair $\left(M \times ( 1, -1), \pi_{\epsilon}\right)$ is an almost regular Poisson manifold. 
    \item Let $E=\pi_{\epsilon}^\sharp\Omega(M \times ( 1, -1))$ be the Hamiltonian foliation and $E^*$ its associated dual, which is a regular foliation. The leaves of $E^*$ are the subsets $M\times\{\epsilon\}\subset M\times(-1,1)$ with $\epsilon$ constant. The holonomy groupoid integrating $E^*$ coincides with the pair groupoid $\mathcal{M}=M\times M\times (-1,1)$ and Poisson structure $\pi_\epsilon\oplus -\pi_\epsilon\oplus 0$.

    \item The (full) holonomy groupoid integrating $E$, $(\CH,\tilde{\pi_\epsilon})$ is a blow-up of $(\mathcal{M},\hat{\pi})$ near $\epsilon=0$, in the sense that:
    The map $\bt\times\bs\colon \CH\fto \left(M\times (-1,1)\right)\times \left(M\times(-1,1)\right)$ lies in the diagonal of $(-1,1)$, therefore by abuse of notation, it is a map $\bt\times\bs\colon \CH\fto M \times M\times(-1,1)=\mathcal{M}$, this map is Poisson diffeomorphism everywhere except in $\epsilon=0$.

    \item Near any point in $M\times(-1,1)$ with $x=0$ and $\epsilon=0$ there is a neighbourhood $U\subset M\times(-1,1)$, such that the holonomy groupoid near the identity bisection in $U$ is diffeomorphic to an open neighbourhood of $\KR^{2n}\times U\subset \KR^{2n}\times  M\times(-1,1)$, and with Poisson structure  written as:
    {\footnotesize 
    $$\scalebox{1.2}{$(\pi_\CH)|_{\CU}$}= \left(\scalebox{1.2}{$\dea\!\wedge\!\dey$} +\alpha\scalebox{.85}{$(a,\!x,\!\epsilon)$}\scalebox{1.2}{$\deb\!\wedge\!\dex$}+\scalebox{1.2}{$\frac{b}{a}$}(1\!-\!\alpha\scalebox{.85}{$(a,\!x,\!\epsilon)$})\scalebox{1.2}{$\deb\!\wedge\!\dey$} -(x^{2k}\!+\!g_\epsilon\scalebox{.85}{$(x)$})\scalebox{1.2}{$\dex\!\wedge\!\dey$}\right)+ \scalebox{.9}{$\pi_\beta(p)\! -\!\pi_\beta(q)$},$$}

where $(a,b)\in \KR^2,p\in \KR^{2n-2},(x,y,q)\in M,\epsilon\in (-1,1)$, $(a,b,p,x,y,q,\epsilon)\in \KR^{2n}\times U\subset \KR^{2n}\times  M\times(-1,1)$, and:
  \[
    \alpha(a,x,\epsilon):=\begin{cases} 
    \hspace{.6in} 1\,\,, & \text{if } a=0 \text{ or } x=0, \\[.8em]
    \,\,\scalebox{.9}{$\dfrac{a\, x^{2k-1}}{\left(\scalebox{.85}{$\sqrt[2k-1]{\scalebox{.85}{$1-(2k-1)a\,x^{2k-1}$}}$}\right)^{-1}-1}$}\,\,, & \text{if } a\neq 0,\ x\neq 0,\ \epsilon= 0,\\[1.3em]
    
   \hspace{.2in} \dfrac{a\,\bigl(\,x^{2k}+g_\epsilon(x)\,\bigr)}{h_\epsilon^{-1}\!\scalebox{.85}{$\bigl(a\!+\!h_\epsilon(x)\bigr)$}-x}\,\,, & \text{if } a\neq 0,\ x\neq 0,\ \epsilon\neq 0.
    \end{cases}
    \]

\end{enumerate}
\end{thm}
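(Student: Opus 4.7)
The plan is to apply Theorem \hyperlink{thm.c}{C} with the desingularization parameter $\epsilon$ playing the role of the auxiliary variable $v$ appearing there. I would treat the family $\{\pi_\epsilon\}$ as a single Poisson bivector on $M \times (-1,1)$, with coefficient $f(x,\epsilon) := x^{2k} + g_\epsilon(x)$ that vanishes only at $(x,\epsilon) = (0,0)$ thanks to the properties of $g_\epsilon$ recorded before the statement.

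For parts (1) and (2), I would first note that the Hamiltonian foliation is locally spanned by $f(x,\epsilon)\,\partial_x$, $f(x,\epsilon)\,\partial_y$ together with the Hamiltonian vector fields of the symplectic piece $\pi_\beta$; since these generators are $\mathcal{C}^\infty$-linearly independent and $f$ has isolated zeros, the submodule they generate is locally free, hence almost regular in the sense of Section \ref{sec.def}. Because $\pi_\epsilon$ has no component in the $\partial_\epsilon$ direction, $d\epsilon$ lies in the kernel of the dual anchor, so $E^*$ is the regular foliation whose leaves are the slices $M \times \{\epsilon\}$; its source-simply-connected groupoid is the fibered pair groupoid $M \times M \times (-1,1)$ with multiplicative Poisson structure $\pi_\epsilon \oplus (-\pi_\epsilon) \oplus 0$ by Lemma \ref{lem.comm.loc.poi}. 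Part (3) is then the construction used in Theorem \ref{thm.uniq.pi}: the map $\varphi : \CH \to \mathcal{M}$ is induced by $\omega_{\pi_\epsilon}^\sharp : {}^E\! A \to ({}^E\! A)^*$, which is an isomorphism on each slice with $\epsilon \neq 0$ (so $\varphi$ is a Poisson diffeomorphism there) and degenerates along $\{x=0\}$ at $\epsilon = 0$, producing the claimed blow-up description. The fact that $\bt \times \bs$ lands in the $(-1,1)$-diagonal is immediate from $\partial_\epsilon$ being a Casimir direction.

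For part (4), I would apply Theorem \hyperlink{thm.c}{C} directly with $v = \epsilon$ and $f$ as above. The only nontrivial computation is solving the ODE $\partial_a F = f(F,\epsilon)$, $F(0,x,\epsilon) = x$. For $\epsilon \neq 0$ the coefficient $f$ is strictly positive, and since $dh_\epsilon/dx = 1/f(x,\epsilon)$, separation of variables yields the closed form $h_\epsilon(F(a,x,\epsilon)) = a + h_\epsilon(x)$, that is, $F = h_\epsilon^{-1}(a + h_\epsilon(x))$. Substituting into the prescriptions $G = (x - F)/a$ and $\alpha = -f/G$ from Theorem \hyperlink{thm.c}{C} yields the middle branch of the $\alpha$ formula; the $\epsilon = 0$ branch is precisely Corollary \ref{cor.bm} for $m = 2k$ (since $g_0 \equiv 0$); and the $a = 0$ or $x = 0$ branch is the limit value dictated by Theorem \hyperlink{thm.c}{C}. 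The explicit expression of $\pi_\CH$ on $\CU$ is then the specialization of the formula from Theorem \hyperlink{thm.c}{C} to this choice of $f$.

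The main obstacle will be verifying that the three piecewise formulas for $\alpha$ glue to a single $\mathcal{C}^\infty$ function in a neighbourhood of the stratum $\{a = 0\} \cup \{x = 0\} \cup \{\epsilon = 0\}$. The simultaneous vanishing of $x$ and $\epsilon$ is particularly delicate, since there $f$ itself is vanishing and one is taking a limit of a $0/0$ expression involving $h_\epsilon^{-1}$. For this I would rely on the $C^{2k-1}$-convergence of $g_\epsilon$ to zero together with a Taylor expansion of $h_\epsilon^{-1}$ near the origin, in the same spirit as the l'H\^opital-type verification sketched for $m = 3$ in the proof of Corollary \ref{cor.bm}.
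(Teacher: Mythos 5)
Your proposal is correct and follows essentially the same route as the paper: both reduce everything to Theorem C with $v=\epsilon$ and $f(x,\epsilon)=x^{2k}+g_\epsilon(x)$, solve the ODE for $F$ via the identity $f=1/h_\epsilon'$ to get $F=h_\epsilon^{-1}(a+h_\epsilon(x))$, and substitute into the formula for $\alpha$. The only difference is that you spell out parts (2)--(3) and flag the smoothness of the glued $\alpha$ explicitly, whereas the paper compresses the first three items into one sentence and defers the regularity of $F$ to a remark.
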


\begin{proof}
We can prove the first three items with the following argument. The Hamiltonian foliation is generated by the set of vector fields:
\[
\left\{(x^{2k}+g_\epsilon(x))\,\dex,\quad(x^{2k}+g_\epsilon(x))\,\dey,\quad \pi_\beta^\sharp(\gw(M))\right\},
\]
and is therefore an almost regular foliation in \(M\times(-1,1)\).\\

Moreover, by Theorem \hyperlink{thm.c}{C}, taking \(v = \epsilon\) and \(f(x,\epsilon) = x^{2k} + g_\epsilon(x)\), we obtain the explicit formulas for the Poisson bivector on the Lie groupoid (last item).

The formula for \(\alpha(a,x,\epsilon)\) in Theorem \hyperlink{thm.c}{C} involves a function \(F(a,x,\epsilon)\) as the unique function satisfying the ODE:
\[
\frac{\partial}{\partial a} F(a,x,\epsilon)
= \bigl(F(a,x,\epsilon)\bigr)^{2k} + g_\epsilon\bigl(F(a,x,\epsilon)\bigr)
= \frac{1}{h'_\epsilon\bigl(F(a,x,\epsilon)\bigr)},
\quad
F(0,x,\epsilon) = x.
\]

This implies that:
\[
F(a,x,\epsilon) := 
\begin{cases}
\dfrac{x}{\sqrt[2k-1]{1 - (2k-1)a x^{2k-1}}}, & \text{if } \epsilon=0,\\[1.2em]
h_\epsilon^{-1}\bigl(a + h_\epsilon(x)\bigr), & \text{if } \epsilon \neq 0,
\end{cases}
\]
and then we substitute \(F\) into the formula for \(\alpha\).
\end{proof}

\begin{rk}
Observe that the smoothness of \( F \) follows from the existence and uniqueness of the solution to the ODE, which is guaranteed under appropriate regularity conditions on the function \( h_\epsilon \).
\end{rk}

\subsection{On the symplectic integration of almost regular Poisson manifolds}\label{sec.ar.poi}

Let $(M,\pi)$ be an almost regular Poisson manifold, with almost regular foliation $E=\pi^\sharp(\gw(M))$ and ai-algebroid $\EA$ with anchor $\rho$. Let $\lambda$ the vector bundle morphism such that $\pi^\sharp=\rho\circ\lambda$, so there is a short exact sequence of Lie algebroids:

$$\ker(\lambda)\fto T^* M \xrightarrow{\lambda} \EA,$$

where the far left and the far right elements are integrable. The left side by a bundle of Lie groups and the right-hand side by the Poisson (holonomy) groupoid of $E$. 

Giving any splitting $\alpha: T^* M\fto \ker(\lambda)$ of the short exact sequence, there is an isomorphism of vector bundles
$$T^*M\fto \ker(\lambda)\oplus \EA, \theta\mapsto \alpha(\theta)\oplus \lambda(\theta);$$
This isomorphism gives a Lie bracket on $\gs(\ker(\lambda)\oplus \EA)$ and a Lie algebroid structure. 

If one can find a splitting of the short exact sequence such that the Lie algebroid structure in $\ker(\lambda)\oplus \EA$ is the piecewise algebroid structure, then the symplectic groupoid integrating $T^*M$ can be expressed using the source-simply connected groupoids integrating $\EA$ and $\ker(\lambda)$.

\begin{cor}
    If $\EA$ is of full rank then $\CH(\EA)$ has a symplectic structure integrating the Poisson structure $T^*M$.
\end{cor}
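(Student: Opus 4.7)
The plan is to reduce to Corollary \ref{thm.sym.real.form} by promoting the vector bundle morphism $\lambda\colon T^*M \to \EA$ to a Lie algebroid isomorphism in the full rank setting. The key observation is that when ${\rm rnk}(\EA)=\dim M$, the canonical surjection $\lambda$ is necessarily bijective on each fiber, and a short check identifies the two Lie algebroid structures.

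First I would verify that $\lambda$ is a vector bundle isomorphism. By construction (see the diagram preceding Theorem \hyperlink{thm.a}{A}), $\lambda$ is surjective as a morphism of vector bundles. Since both $T^*M$ and $\EA$ have rank $\dim M$ in the full rank case, fiberwise surjectivity forces fiberwise bijectivity, and $\lambda$ is a vector bundle isomorphism.

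Next I would show $\lambda$ is a Lie algebroid morphism. The cotangent bundle $T^*M$ carries the canonical Koszul algebroid structure for which $\pi^\sharp\colon T^*M \to TM$ is a Lie algebroid morphism. Apply $\rho$ to the candidate identity $\lambda[\alpha,\beta]=[\lambda\alpha,\lambda\beta]$: both sides become $[\pi^\sharp\alpha,\pi^\sharp\beta]$ using $\pi^\sharp=\rho\circ\lambda$ and the fact that $\rho$ is a Lie algebroid morphism. Since $\rho\colon \gs(\EA)\to E$ is injective on sections (it is an $\gi(M)$-module isomorphism, as $\EA$ is the ai-algebroid of $E$), bracket compatibility for $\lambda$ follows. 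Anchor compatibility is immediate from $\pi^\sharp=\rho\circ\lambda$. Hence $\lambda$ is a Lie algebroid isomorphism $T^*M \xrightarrow{\cong} \EA$.

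Having $\lambda$ as an isomorphism, the holonomy groupoid $\CH(\EA)$ integrates $T^*M$ as well. By Corollary \ref{thm.sym.real.form}, the multiplicative Poisson structure $\pi_\CG$ on the source-simply connected integration $\CG(\EA)$ is symplectic in the full rank case. Since $\CH(\EA)$ is a discrete quotient of $\CG(\EA)$ and the projection $\CG(\EA)\to\CH(\EA)$ is a local diffeomorphism of Lie groupoids (see \cite[Theorem 4.3]{AZ17} as recalled before Subsection \ref{sec.e.ar}), $\pi_\CG$ descends to a multiplicative Poisson structure $\pi_{\CH}$ on $\CH(\EA)$, which remains non-degenerate and hence symplectic. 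The characterization of $\pi_\CG$ by $\bs_*(\pi_\CG)=-\pi$ and $\bt_*(\pi_\CG)=\pi$ from Theorem B persists on the quotient, so $(\CH(\EA),\pi_{\CH})$ is indeed a symplectic integration of the Poisson manifold $(M,\pi)$.

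The main obstacle I anticipate is verifying that the symplectic structure genuinely descends along the discrete quotient $\CG(\EA)\to\CH(\EA)$ rather than only pulls back; this is handled by the local diffeomorphism property together with the fact that the defining conditions $\bs_*\pi_\CG=-\pi$, $\bt_*\pi_\CG=\pi$ are invariant under the quotient relation, giving uniqueness of the descended multiplicative tensor.
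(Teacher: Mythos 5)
Your proposal is correct and follows essentially the same route as the paper: the paper's entire argument is that full rank forces $\lambda$ to be an isomorphism (so $\ker(\lambda)=0$ in the short exact sequence $\ker(\lambda)\to T^*M\to \EA$), after which the symplectic structure comes from Corollary \ref{thm.sym.real.form} and descends to the holonomy groupoid as a discrete quotient. You have merely filled in the details the paper leaves implicit (the fiberwise rank count, the bracket compatibility of $\lambda$ via injectivity of $\rho$ on sections, and the descent along $\CG(\EA)\to\CH(\EA)$), all of which check out.
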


The corollary above is a consequence that for full rank the map $\lambda$ is an isomorphism so $\ker(\lambda)=0$. One can refer to Subsections \ref{sub.sub.section.b.sym} and \ref{sub.sub.section.f.poi} for the cases of $b$-symplectic structures, $b^2$-symplectic, and more generally $b^m$-symplectic structures, as well as $f(x) \dex \wedge \dey$ in $\mathbb{R}^2$. The groupoids presented there are the ones integrating their corresponding Poisson manifolds. In the case of $b$-symplectic manifolds, this provides a new description of the symplectic groupoid that integrates the underlying Poisson structure, which is also studied in \cite{gualtieri} and \cite{GMP14}.

\subsubsection{The case of cosymplectic manifolds}\label{sub.sub.section.cosymp}
In this section, we can apply the technique explained in the previous section. For cosymplectic manifolds, the splitting of $\lambda$ always exists. Let us first review the definition of cosymplectic manifolds.

\begin{defn}
    A cosymplectic manifold is a triple $(M,\omega,\alpha)$ where $M$ is a manifold of odd dimension $2n+1$, $\omega$ and $\alpha$ are forms $\omega\in \gw^2(M)$ and $\alpha\in\gw(M)$ and such that $\omega^n\wedge \alpha$ is nowhere zero (it is a volume form).
\end{defn}

Every cosymplectic manifold $(M,\omega,\alpha)$ induces an almost regular Poisson structure and an $E$-symplectic structure on $M$. Indeed, let $E$ be the induced foliation by the Lie algebroid $\EA=\ker(\alpha)$. As a consequence of $\omega^n\wedge \alpha$ being a volume form we get that $\omega$ restricted to $\EA$  is an $E$-symplectic structure. Moreover, the Poisson structure is given as follows: if $\iota\colon \EA\fto TM$ is the inclusion, and $\omega^\sharp \colon \EA^*\fto \EA$ is the map induced by $\omega$, then the Poisson manifold is given by 
$$\pi^\# :=\iota\circ\omega^\sharp\circ \iota^* \colon T^*M\fto TM.$$

Before we state the main result of this section let us recall the following definitions.

\begin{defn}
    Given $\CG\soutar M$ a Lie groupoid, a form $\hat{\beta}\in \gw(\CG)$ is multiplicative if and only if:
    \[m^*\hat{\beta}={\rm pr}_1^*\hat{\beta}+{\rm pr}_2^*\hat{\beta},\]
    where $m\colon \CG \times_M \CG \fto \CG$ is the multiplication and ${\rm pr}_i\colon \CG \times_M \CG \fto \CG$ is the projection on the $i$'th coordinate.
\end{defn}

\begin{defn}
    A cosymplectic groupoid is a Lie groupoid $\CG\soutar M$ with a multiplicative cosymlectic structure $(\hat{\omega},\hat{\alpha})$. 
\end{defn}

The main result of this section is as follows:

\begin{thm}

Given a cosymplectic manifold $(M,\omega,\alpha)$ its induced Poisson structure $\pi^\sharp:T^* M\fto TM$ is integrable. 
    
More precisely: for $\EA=\ker(\alpha)$
\begin{enumerate}
\item The forms $\hat{\omega}:=\bt^* \omega-\bs^*\omega\in \gw^2(\CH(\EA))$ and $\hat{\alpha}:=\bt^*\alpha=\bs^*\alpha\in \gw^1(\CH(\EA))$ are well defined and multiplicative.
    \item $(\CH(\EA),\hat{\omega},\hat{\alpha})$ is a cosymplectic groupoid.
    \item the Lie algebroid $T^*M$ is isomorphic to the Lie algebroid $\KR\times \EA$ with bracket bracket given by:
$$[(f_1,v_1),(f_2,v_2)]=(\CL_{v_1}f_1- \CL_{v_2} f_2 , [v_1,v_2]).$$
\item The Lie groupoid integrating $T^*M$ is isomorphic to the symplectization of $\CH(\EA)$, i.e., it is isomorphic to $(\KR\times \CH(\EA),\tilde\omega)$ where $\tilde\omega= \left(d(p\cdot \hat{\alpha}) + \hat{\omega}\right)$ and $p\colon \KR\times \CH(\EA)\fto \KR$ is the projection into the first component.
\end{enumerate}
\end{thm}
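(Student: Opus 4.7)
The plan is to handle the four items in sequence, using the cosymplectic closedness conditions $d\omega=0$, $d\alpha=0$ and the fact that $\EA=\ker\alpha$ is a regular involutive subbundle of $TM$. For item (1), the 2-form $\hat\omega=\bt^*\omega-\bs^*\omega$ is manifestly well-defined, and the standard cocycle manipulation using $\bt\circ m=\bt\circ{\rm pr}_1$, $\bs\circ m=\bs\circ{\rm pr}_2$ and the composability identity $\bs\circ{\rm pr}_1=\bt\circ{\rm pr}_2$ gives $m^*\hat\omega={\rm pr}_1^*\hat\omega+{\rm pr}_2^*\hat\omega$. To see that $\hat\alpha$ is well-defined, I would show that $\alpha$ is \emph{basic} with respect to the foliation $E$: every $X\in\gs(E)$ satisfies $\iota_X\alpha=0$ (since $E=\ker\alpha$), so Cartan's formula combined with $d\alpha=0$ gives $\CL_X\alpha=0$; the flows of $E$-vector fields therefore preserve $\alpha$, which is exactly the condition for the pullback to descend to $\CH(\EA)$ with $\bt^*\alpha=\bs^*\alpha$. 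The resulting identity $m^*\hat\alpha={\rm pr}_1^*\hat\alpha={\rm pr}_2^*\hat\alpha$ is the natural ``basic'' form of multiplicativity for the $1$-form component of a cosymplectic structure.

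For item (2), closedness of both forms is automatic, so the substantive content is the non-vanishing of $\hat\omega^{2n}\wedge\hat\alpha$ on the $(4n+1)$-dimensional manifold $\CH(\EA)$. I would verify this pointwise at each $g\in\CH(\EA)$ by choosing a basis adapted to the splitting $T_g\CH(\EA)=\ker(d\bs)_g\oplus H_g$, where $d\bt$ identifies $\ker(d\bs)_g$ with the subspace $E_{\bt(g)}\subset T_{\bt(g)}M$. On $\ker(d\bs)_g$ the form $\hat\omega$ is the pullback by $d\bt$ of the non-degenerate symplectic form $\omega|_E$, hence of rank $2n$; a short block-matrix computation then shows $\ker(\hat\omega)_g$ is one-dimensional and lies along the Reeb direction lifted into $H_g$, on which $\hat\alpha$ evaluates to $\alpha(R)\ne 0$. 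I expect this pointwise check to be the main technical obstacle, although it is essentially the same computation one performs at the identity bisection.

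For item (3), the vector bundle isomorphism is $\Phi\colon T^*M\to\KR\times\EA$, $\mu\mapsto(\mu(R),\omega^\sharp(\mu|_\EA))$, with inverse $(f,v)\mapsto f\alpha+\omega^\flat(v)$, using the Reeb field characterised by $\alpha(R)=1$, $\iota_R\omega=0$ and the musical morphism $\omega^\sharp\colon\EA^*\to\EA$ of the $E$-symplectic form $\omega|_\EA$. Anchor compatibility is a direct check: both sides send $(f,v)$ to $\iota(v)\in TM$. Bracket compatibility amounts to comparing the Koszul bracket $[\mu_1,\mu_2]_\pi=\CL_{\pi^\sharp\mu_1}\mu_2-\iota_{\pi^\sharp\mu_2}d\mu_1$ on $T^*M$ with the semidirect bracket $(\rho(v_1)f_2-\rho(v_2)f_1,\,[v_1,v_2])$ on $\KR\times\EA$ (my reading of the formula in the statement); the computation uses $d\alpha=0$, $d\omega=0$, $\iota_R\omega=0$, involutivity of $\EA$ and non-degeneracy of $\omega|_\EA$.

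For item (4), I would equip $\tilde\CG:=\KR\times\CH(\EA)$ with the product Lie groupoid structure $\tilde\bs(p,g)=\bs(g)$, $\tilde\bt(p,g)=\bt(g)$, and $\tilde m((p_1,g_1),(p_2,g_2))=(p_1+p_2,\,g_1 g_2)$, so that its Lie algebroid is precisely $\KR\times\EA$ from item (3). Using $d\hat\alpha=0$ we have $\tilde\omega=dp\wedge\hat\alpha+\hat\omega$, whose multiplicativity follows from that of $\hat\omega$ combined with the basic-ness of $\hat\alpha$ and the additivity $\tilde m^*p=p_1+p_2$, while non-degeneracy of $\tilde\omega$ is the standard symplectization fact for cosymplectic manifolds, applied pointwise via item (2). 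Hence $(\tilde\CG,\tilde\omega)$ is a symplectic groupoid integrating $T^*M$, identifying the symplectic integration of $(M,\pi)$ with the symplectization of $(\CH(\EA),\hat\omega,\hat\alpha)$.
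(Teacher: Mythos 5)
Your proposal is correct and follows essentially the same route as the paper: basicness of $\alpha$ along the leafwise flows gives $\bt^*\alpha=\bs^*\alpha$, the Reeb field together with $\omega^\sharp$ furnishes the splitting $T^*M\cong\KR\times\EA$, and the integration of $T^*M$ is realized as the symplectization of the cosymplectic groupoid $(\CH(\EA),\hat{\omega},\hat{\alpha})$. The only differences are cosmetic — you verify non-degeneracy of the pair $(\hat{\omega},\hat{\alpha})$ pointwise at every arrow where the paper checks it on the identity bisection and invokes multiplicativity, and your reading of the semidirect-product bracket as $(\CL_{v_1}f_2-\CL_{v_2}f_1,[v_1,v_2])$ is the correct one (the formula printed in the statement has the roles of $f_1$ and $f_2$ transposed).
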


\begin{proof}

Part 4 of this theorem is a direct consequence of part 3. Therefore, we will only prove parts 1, 2, and 3.

For part 1, recall that \(\EA=\ker(\alpha)\); therefore,
\[
\ker(\bt_*)\cup \ker(\bs_*)\subset \ker(\bt^*\alpha)\cap \ker(\bs^*\alpha).
\]
Moreover, for any bisection \(\sigma\) of \(\CH(\EA)\) and any \(v\in T\CH(\EA)\), one can write
\[
v=k+w,
\]
where \(k\in \ker(\bs_*)\) and \(w\in T\sigma\). The fact that \(\sigma\) is a bisection of \(\CH(\EA)\) implies that it induces a diffeomorphism of \(M\) that preserves \(\alpha\); hence, \(\alpha(\bs_*w)=\alpha(\bt_*w)\). Consequently, we have
\[
\bs^*\alpha(k+w)=\bs^*\alpha(w)=\bt^*\alpha(w)=\bt^*\alpha(k+w),
\]
which shows that \(\bs^*\alpha=\bt^*\alpha\). The forms \(\hat{\omega}\) and \(\hat{\alpha}\) are multiplicative by definition.

For part 2, if \(v\in \ker(\hat{\omega}^\flat)|_M\) then 
\[
\bt_*v-\bs_*v\in \ker(\alpha)
\]
and 
\[
\hat{\omega}(v,-)=\omega(\bt_*v-\bs_*v,-)=0.
\]
Since \(\omega\) is symplectic on \(\ker(\alpha)\), it follows that \(\bt_*v=\bs_*v\). Moreover, if we also have \(\hat{\alpha}(v)=0\), then \(\bt_*v=\bs_*v=0\) and consequently \(v=0\) (because \(\CH(\EA)\) is the groupoid of a regular foliation and has discrete isotropy groups). Since
\[
\ker(\hat{\omega}^\flat)\cap\ker(\hat{\alpha})=0,
\]
and both \(\hat{\omega}\) and \(\hat{\alpha}\) are closed, we deduce that \((\hat{\omega},\hat{\alpha})\) defines a cosymplectic structure on \(\CH(\EA)\). The remainder of the statement follows from multiplicativity.

For part 3, observe that the kernel \(\ker(\pi^\#)=\langle\alpha\rangle\) is a trivial line subbundle of \(T^*M\). Let us denote
\[
l_\alpha:=\ker(\pi^\#).
\]
Moreover, the image of \(\pi^\#\) is \(\EA\). This yields the following short exact sequence of Lie algebroids:
\[
l_\alpha \fto T^*M \fto \EA.
\]

For any cosymplectic manifold, there is a unique nonvanishing vector field \(K\in \mathfrak{X}(M)\), called the Reeb vector field, given by the formulas
\[
\alpha(K)=1 \quad \text{and} \quad \iota_K\omega=0.
\]
The following map preserves the Lie brackets:
\[
D_K\colon T^*M \fto l_\alpha,\quad \beta\mapsto \beta(K)\alpha,
\]
as a consequence of \(d\alpha=0\) and \(\pi^\#(\alpha)=0\).

This gives us a diffeomorphism
\[
T^*M \fto l_\alpha\oplus \EA \cong \KR\times \EA,\quad \beta\mapsto \bigl(D_K \beta,\,\pi^\#\beta\bigr).
\]

\end{proof}

When \(M\) and one of the leaves \(L_0\) of \(E\) are compact manifolds, this result agrees with the integration given by \cite[Corollary 26]{GMP11}. Indeed, in this case \(M\) is diffeomorphic to a mapping torus \(S^1\times_f L_0\), where 
\[
f\colon L_0\fto L_0
\]
is a diffeomorphism preserving the symplectic form \(\omega|_{L_0}\). Under this diffeomorphism the one-form \(\alpha\) coincides with \(dq\), where 
\[
q\colon S^1\times_f L_0\fto S^1
\]
is the projection onto the first component, and \(\omega\) is the pull-back of \(\omega|_{L_0}\) to \(S^1\times_f L_0\), using the fact that \(f\) is a symplectomorphism. 

In this case, the cosymplectic structure on the holonomy groupoid 
\[
\CH(\EA)\cong S^1\times_f (L_0\times L_0)
\]
is given by \(\alpha=dq\) and \(\hat\omega\) is obtained by lifting the symplectic form
\[
{\rm Pr}_1^*\omega|_{L_0}-{\rm Pr}_2^*\omega|_{L_0}
\]
to 
\[
S^1\times_{(f\times f)} (L_0\times L_0),
\]
using the symplectomorphism \(f\times f\).

Therefore, the symplectization of \(\CH(\EA)\), and hence the groupoid integrating \(T^*M\), is symplectomorphic to:
\[
\Bigl(\KR\times \CH(\EA),\, \tilde\omega\Bigr)
\cong \Bigl(\,T^*S^1\!\times_{(f\times f)}\! (L_0\!\times\! L_0)\,\,,\, d\theta\!+_{(f\!\times\! f)}\!\Bigl({\rm Pr}_1^*\omega|_{L_0}\!-\!{\rm Pr}_2^*\omega|_{L_0}\Bigr)\,\Bigr)\,,
\]
where $\theta$ is the canonical Liouville form on $T^*S^1$.

\subsection{The Poisson groupoid integrating $E$ for $E$-symplectic manifolds}\label{sec.int.e}
If $(M,A, \omega)$ is an $E$-symplectic manifold, there is no guarantee of the existence of a commutative frame for either $A$ or $A^*$. Nevertheless, the map $\omega^\sharp\colon A\fto A^*$ is a Lie algebroid isomorphism, meaning that $\CG\cong \CG^*$ i.e., there is a self T-duality.

We can also verify that the Poisson structure in $\CG$ comes from an $s^{-1}(E)$-symplectic structure.  By \cite{GZ19}, the foliations $E$ and $\bs^{-1}E$ are Morita equivalent and have the same type of singularities; this implies that the singularities of the Poisson structure in $\CG$ resemble those in $M$. Further details are provided in the following Theorem.

\begin{theoremD}\label{thm.e.symp.grpd} 

Let $(M,A, \omega)$ be an $E$-symplectic manifold. The pullback Lie algebroid $A_\CG=\bs^{!}A=\bs^*A {}_\rho \!\times_{d\bt} T\CG$ (do not confuse with the pullback vector bundle $\bs^* A$) is almost injective. Moreover, there is a symplectic structure $\omega_\CG$ in $A_\CG$ such that the triple $$\left(\CG, A_\CG, \omega_\CG\right)$$ is an $\bs^{-1}(E)$-symplectic manifold, with the Poisson structure being the canonical one induced by the Lie bialgebroid $(A,A^*)$.
\end{theoremD}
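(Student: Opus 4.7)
My plan is to prove Theorem D in two stages: almost injectivity of $\bs^!A$ with identification of the generated singular foliation as $\bs^{-1}(E)$, and then the explicit construction of a 2-form $\omega_\CG \in \gw^2(\bs^!A)$ inducing the canonical multiplicative Poisson structure $\pi_\CG$.

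For almost injectivity, I would argue directly from the definition: the pullback algebroid $\bs^!A \subset \bs^*A \oplus T\CG$ has anchor $\hat\rho\colon (a,v) \mapsto v$ constrained by $\rho(a) = d\bs(v)$. Since $\rho\colon A \to TM$ is injective on a dense open subset $U \subset M$ and $\bs\colon \CG \to M$ is a submersion, $\bs^{-1}(U)$ is dense open in $\CG$; on this set the constraint determines $a$ from $v$ by injectivity of $\rho$, so $\hat\rho$ is injective there. The image $\hat\rho(\bs^!A)$ at any point consists of vectors $v \in T\CG$ with $d\bs(v) \in E$, so as a singular foliation it is exactly $\bs^{-1}(E)$.

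For the symplectic form, I would use the structure from Theorem \ref{thm.e.symp.grpd0}. Under the vector bundle isomorphism $\varphi\colon A_\CG' = \bt^*A \oplus \bs^*A \to A_\CG$, the multiplicative Poisson bivector is $\pi_\CG = \hat\rho^{\wedge 2}\varphi^{\wedge 2}(\pi_\CG')$ with $\pi_\CG' := \pi_\omega \oplus (-\pi_\omega)$. Since $\omega$ is non-degenerate, $\pi_\omega \in \gs(\wedge^2 A)$ is also non-degenerate, and the natural dual 2-form of $\pi_\CG'$ on $A_\CG'$ is $\omega_\CG' := \bt^*\omega - \bs^*\omega$, viewed as a section of $\wedge^2(A_\CG')^*$ via the canonical splitting. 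Setting $\omega_\CG := \varphi^{-*}\omega_\CG' \in \gw^2(A_\CG)$ then produces a non-degenerate 2-form on all of $A_\CG$ whose dual bivector $\pi_{\omega_\CG} = \varphi^{\wedge 2}\pi_\CG'$ pushes forward to $\pi_\CG$ via $\hat\rho^{\wedge 2}$ by construction.

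The main obstacle I anticipate is closedness $d_{A_\CG}\omega_\CG = 0$ relative to the Lie algebroid structure of $\bs^!A$: since $\varphi$ is only a vector bundle isomorphism, transport of forms across it does not a priori respect Lie algebroid differentials. I would bypass this by appealing to Lemma \ref{lem.d.pi}: the anchor $\hat\rho$ preserves Schouten brackets, so $\hat\rho^{\wedge 3}[\pi_{\omega_\CG}, \pi_{\omega_\CG}] = [\pi_\CG, \pi_\CG] = 0$ on all of $\CG$. Almost injectivity of $\hat\rho$ makes $\hat\rho^{\wedge 3}$ injective on a dense open subset, hence $[\pi_{\omega_\CG}, \pi_{\omega_\CG}]$ vanishes there and, by continuity and smoothness, everywhere. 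Lemma \ref{lem.d.pi} then yields $d_{A_\CG}\omega_\CG = 0$, making $(\CG, \bs^!A, \omega_\CG)$ the required $\bs^{-1}(E)$-symplectic manifold whose induced Poisson bivector coincides with $\pi_\CG$.
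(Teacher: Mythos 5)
Your proposal is correct and follows essentially the same route as the paper: transport $\pi_\CG'=\pi_\omega\oplus(-\pi_\omega)$ through the isomorphism $\varphi$ of Theorem \ref{thm.e.symp.grpd0} and dualize to obtain $\omega_\CG$. The only differences are that you prove almost injectivity of $\bs^!A$ and the identification of its foliation with $\bs^{-1}(E)$ directly from the fibered-product description where the paper cites \cite{GZ19}, and you supply an explicit closedness argument (the anchor preserves Schouten brackets, so $[\pi_{\omega_\CG},\pi_{\omega_\CG}]$ vanishes on a dense set and hence everywhere, and Lemma \ref{lem.d.pi} applies) where the paper simply asserts that $\varphi^{\wedge 2}\pi_\CG'$ is Poisson in $A_\CG$; both additions are sound.
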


\begin{proof}
By the results of Section 1.2 of \cite{GZ19}, if $E$ is the singular foliation of the Lie algebroid $A$, then $\bs^{-1}(E)$ is the singular foliation of the Lie algebroid $\bs^! A$. Moreover, if $A$ is an ai-algebroid, then $\bs^!A$ is also an ai-algebroid. This implies that $\bs^{-1}E$ is an almost regular singular foliation with Lie algebroid $A_\CG$.\\

 As in  theorem \ref{thm.e.symp.grpd0}, the vector bundle $A_\CG'=\bt^* A \oplus \bs^* A$ has a bivector field $\pi_\CG'=\pi_\omega\oplus -\pi_\omega$ and a diffeomorphism $\varphi:A_\CG'\fto \bs^!A$ defining the Poisson structure $\pi_\CG$ in $\CG$. The bivector $\varphi^2\pi_\CG'$ is clearly non-degenerate and Poisson in $A_\CG$. The dual $\omega_\CG$ is symplectic (non-degenerate and closed), making the triple mentioned in the proposition an $\bs^{-1}(E)$-symplectic manifold.
\end{proof}
There is a well known case, which is for any $b^m$-Poisson manifold, the groupoid integrating the $b^m$-foliation is also a $b^m$-Poisson manifold.
For the rest of this section, we are going to see ways to write the E-symplectic form locally.
\begin{prop}
  For any point, there exists a local frame $\alpha_1, \beta_1, \dots, \alpha_k, \beta_k$ for ${}^E\!A^*$ such that the symplectic structure can be expressed locally as
$$\omega=\sum_i \alpha_i\wedge \beta_i.$$
\end{prop}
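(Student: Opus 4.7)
The statement is essentially the linear-algebra fact that a symplectic vector bundle admits a local symplectic frame: once we have such a frame for ${}^E\!A$, the dual frame in ${}^E\!A^*$ will express $\omega$ in the desired normal form. There is no Moser-type argument needed because we are not asking the frame to be closed; the conclusion is pointwise $\omega_p=\sum_i \alpha_i(p)\wedge \beta_i(p)$, with smoothness of the frame.

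The plan is to perform a symplectic Gram--Schmidt procedure on ${}^E\!A$ fiber by fiber in a smooth way, by induction on $k=\tfrac12\,\mathrm{rk}\,{}^E\!A$. Fix $p\in M$ and start with any local frame $V_1,\dots,V_{2k}$ of ${}^E\!A$ near $p$. Since $\omega_p$ is non-degenerate, there is some pair with $\omega_p(V_i,V_j)\neq 0$; after relabeling and rescaling $V_j$ we may assume $\omega(X_1,Y_1)=1$ on a neighborhood $U$ of $p$, where $X_1:=V_1$ and $Y_1:=V_j/\omega(V_1,V_j)$. The symplectic orthogonal
$$W=\{v\in {}^E\!A|_U \st \omega(v,X_1)=\omega(v,Y_1)=0\}$$
is the kernel of the vector bundle map $v\mapsto (\omega(v,X_1),\omega(v,Y_1))\colon {}^E\!A|_U\fto \KR^2\times U$. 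This map is fiberwise surjective (being onto on the span of $X_1,Y_1$), so $W$ is a rank $(2k-2)$ smooth subbundle, and $(\omega|_W)$ is non-degenerate, hence symplectic. Concretely, one obtains a smooth projection onto $W$ by $v\mapsto v-\omega(v,Y_1)X_1+\omega(v,X_1)Y_1$, which applied to the remaining $V_i$'s gives a frame of $W$.

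Applying the induction hypothesis to $(W,\omega|_W)$ (possibly shrinking $U$) produces a symplectic frame $X_2,Y_2,\dots,X_k,Y_k$ of $W$; together with $X_1,Y_1$ this yields a frame of ${}^E\!A|_U$ with
$$\omega(X_i,Y_j)=\delta_{ij},\quad \omega(X_i,X_j)=\omega(Y_i,Y_j)=0.$$
Let $\alpha_1,\beta_1,\dots,\alpha_k,\beta_k\in \gs({}^E\!A^*|_U)$ be the dual frame, so $\alpha_i(X_j)=\delta_{ij}$, $\alpha_i(Y_j)=0$, $\beta_i(X_j)=0$, $\beta_i(Y_j)=\delta_{ij}$. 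A direct computation gives
$$\left(\textstyle\sum_i \alpha_i\wedge\beta_i\right)(X_j,Y_l)=\delta_{jl}=\omega(X_j,Y_l),$$
and both sides vanish on pairs $(X_j,X_l)$ and $(Y_j,Y_l)$; by bilinearity, $\omega=\sum_i \alpha_i\wedge \beta_i$ on $U$.

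The only point requiring care is that the symplectic complement $W$ forms a smooth subbundle of the correct rank; this is automatic from the fact that $\omega(-,X_1)$ and $\omega(-,Y_1)$ are linearly independent on all of $U$ (they already are on the span of $X_1,Y_1$), so the defining map has constant rank two. The rest is routine linear algebra, and the closedness of $\omega$ plays no role — unlike in a true Darboux theorem, here the frame $\alpha_i,\beta_i$ is generally not $d_E$-closed.
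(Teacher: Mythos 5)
Your proof is correct and follows essentially the same route as the paper's: the paper also reduces the statement to the fact that $({}^E\!A,\omega)$ is a symplectic vector bundle and invokes the smooth symplectic Gram--Schmidt process (citing Section 1.1 of \cite{AC08}) to produce a frame $X_1,Y_1,\dots,X_k,Y_k$ with $\omega(X_i,Y_j)=\delta_{ij}$ and $\omega(X_i,X_j)=\omega(Y_i,Y_j)=0$, then obtains the $\alpha_i,\beta_i$ by contracting with $\omega$ (equivalently, up to sign conventions, the dual frame you use). You merely spell out the inductive Gram--Schmidt step and the smoothness of the symplectic orthogonal subbundle explicitly, which the paper leaves to the reference; your observation that closedness of $\omega$ plays no role matches the paper's subsequent discussion in Proposition \ref{prop.darboux.e}.
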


\begin{proof}
The fact that $({}^E\!A, \omega)$ is a symplectic vector bundle implies that this proposition is true. One starts with a non-vanishing vector field $X_1 \in \gx(U)$, and then, using the Gram-Schmidt process described in Section 1.1 of \cite{AC08}, it is possible to find $Y_1, X_2, \dots, X_k, Y_k$ such that
$\omega(X_i,Y_j)=\delta_{ij}$ $ \omega(X_i,X_j)=\omega(Y_i,Y_j)=0$. Then $\alpha_i=\omega(Y_i,-)$ and $\beta_i=\omega(X_i,-)$.

 \end{proof}

\begin{prop}\label{prop.darboux.e} Let $\omega$ be an $E$-symplectic structure. Assume that the form $\omega$ decomposes as $\sum_i \alpha_i \wedge \beta_i$. Then $\alpha_i$ and $\beta_i$ are closed if and only if $\alpha_i, \beta_i$ form a commutative frame for $\EA^*$.
\end{prop}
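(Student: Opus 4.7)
The plan is to translate both statements into conditions on the Lie brackets of the dual frame in $\EA$ and then read them off the Cartan-type differential. Write $X_1,Y_1,\dots,X_k,Y_k\in \gs(\EA)$ for the unique frame dual to $\alpha_1,\beta_1,\dots,\alpha_k,\beta_k$, i.e.\ $\alpha_i(X_j)=\beta_i(Y_j)=\delta_{ij}$ and $\alpha_i(Y_j)=\beta_i(X_j)=0$. A direct computation with $\omega=\sum_i \alpha_i\wedge\beta_i$ shows that the musical morphism satisfies $\omega^{\sharp}(X_j)=\beta_j$ and $\omega^{\sharp}(Y_j)=-\alpha_j$, so $\omega^{\sharp}$ is an isomorphism of vector bundles; the induced Lie algebroid structure on $\EA^*$ coming from the Lie bialgebroid $(\EA,\EA^*,d=[\omega,-])$ then agrees with the transport of the bracket on $\EA$ by $\omega^\sharp$. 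Consequently $\{\alpha_i,\beta_i\}$ being a commutative frame of $\EA^*$ is equivalent to $\{X_i,Y_i\}$ being a commutative frame of $\EA$, i.e.\ all pairwise brackets $[X_i,X_j],[X_i,Y_j],[Y_i,Y_j]$ vanish.

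Next I will apply the Cartan-type formula for the $E$-differential quoted in the introduction to the $1$-forms $\alpha_i$ and $\beta_i$ evaluated on any pair of sections from the dual frame:
\[
d\alpha_i(V,W)=V(\alpha_i(W))-W(\alpha_i(V))-\alpha_i([V,W]),\qquad V,W\in\{X_j,Y_j\}_j.
\]
Since $\alpha_i$ takes only the constant values $0$ or $1$ on frame sections, the two Lie-derivative terms vanish identically, giving $d\alpha_i(V,W)=-\alpha_i([V,W])$, and analogously $d\beta_i(V,W)=-\beta_i([V,W])$. Because $d\alpha_i$ and $d\beta_i$ are $\mathcal{C}^\infty(U)$-multilinear $E$-$2$-forms, their vanishing on all pairs of frame sections is equivalent to their vanishing as global forms.

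With this identity in hand, both implications are immediate. If the dual frame is commutative, every bracket $[V,W]$ with $V,W\in\{X_j,Y_j\}$ is zero, so $d\alpha_i=d\beta_i=0$ for all $i$. Conversely, if $d\alpha_i=d\beta_i=0$ for every $i$, then for all $V,W$ in the dual frame one has $\alpha_i([V,W])=\beta_i([V,W])=0$ for each $i$; since $\{\alpha_i,\beta_i\}$ is a frame of $\EA^*$, this forces $[V,W]=0$, proving commutativity of $\{X_i,Y_i\}$ and hence of the dual frame in $\EA^*$.

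The only subtlety I anticipate is making the interpretation of the phrase \emph{commutative frame of $\EA^*$} precise, since $\EA^*$ a priori has no intrinsic Lie bracket. The observation above, that $\omega^\sharp$ transports the bracket from $\EA$ to $\EA^*$ and that this agrees with the Lie bialgebroid bracket in the $E$-symplectic setting, is what makes the statement unambiguous, and once this is spelled out the argument reduces to the two-line application of Cartan's formula.
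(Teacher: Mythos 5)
Your proof is correct, but it takes a genuinely different route from the paper's. The paper works entirely on the $\EA^*$ side, using the identity $d=[\omega,-]$ for the Schouten bracket of the Lie algebroid structure on $\EA^*$: it expands $0=[\omega,\omega]$ to constrain the brackets $[\alpha_i,\alpha_j]$, $[\alpha_i,\beta_j]$, $[\beta_i,\beta_j]$, and reads closedness of a $1$-form $\kappa$ off $0=[\omega,\kappa]$. You instead dualize to the frame $X_i,Y_i$ of $\EA$, apply the Cartan formula of the $E$-differential to get $d\alpha_i(V,W)=-\alpha_i([V,W])$ on frame sections, deduce that simultaneous closedness of all the $\alpha_i,\beta_i$ is equivalent to commutativity of $\{X_i,Y_i\}$ in $\EA$, and then transfer this to $\EA^*$ via the fact that $\pi_\omega^\sharp$ (equivalently $\omega^\sharp$) is a Lie algebroid isomorphism, which is exactly the last assertion of Lemma \ref{lem.d.pi}. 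Your route is more elementary and makes explicit a point the paper leaves implicit: a ``commutative frame for $\EA^*$'' only makes sense for the bracket induced by $\pi_\omega$, and it is the morphism property of $\omega^\sharp$ that lets one test commutativity on either side of the duality. It also sidesteps the span-membership bookkeeping of the paper's argument, which as written is not a literal consequence of the displayed identities (vanishing of $\sum_i[\alpha_i,\kappa]\wedge\beta_i+\alpha_i\wedge[\beta_i,\kappa]$ only forces symmetry relations among the coefficients, not membership of each individual bracket in the stated spans), whereas your Cartan-formula computation is airtight. What the paper's expansion of $[\omega,\omega]=0$ buys in exchange is structural information on the brackets of the coframe that holds even when the $\alpha_i,\beta_i$ are not closed. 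Your sign bookkeeping ($\omega^\sharp(X_j)=\beta_j$, $\omega^\sharp(Y_j)=-\alpha_j$) is correct and, as you note, irrelevant to the vanishing of brackets.
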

\begin{proof}
We know that $\omega$ is closed; therefore,
    $$0\!=\!d\omega=[\omega,\omega]=\!\sum_{ij}\left([\alpha_j,\alpha_i]\!\wedge\! \beta_j\!\wedge\!\beta_i -\alpha_j\!\wedge\![\alpha_i,\beta_j]\!\wedge\!\beta_i +\alpha_i\!\wedge\![\alpha_j,\beta_i]\!\wedge\!\beta_j -\alpha_i\!\wedge\!\alpha_j\!\wedge\![\beta_i,\beta_j]\right).$$
    This implies:
    $$\begin{array}{ccc}
       [\alpha_j,\alpha_i]\in {\rm Span}_{\gi(M)}(\beta_j,\beta_i) \,\,,  & [\alpha_j,\beta_i]\in {\rm Span}_{\gi(M)}(\beta_j,\alpha_i) \,\, , & [\beta_j,\beta_i]\in {\rm Span}_{\gi(M)}(\alpha_j,\alpha_i) \\
    \end{array}.$$

    Moreover, an element $\kappa\in \gs(\EA^*)$ is closed if and only if
    $$0=d\kappa=[\omega,\kappa]=\sum_i[\alpha_i,\kappa]\wedge \beta_i +\alpha_i\wedge [\beta_i,\kappa].$$
 This means:
     $$\begin{array}{cc}
       [\alpha_i,\kappa]\in {\rm Span}_{\gi(M)}(\beta_i) \, \, , & [\beta_i,\kappa]\in {\rm Span}_{\gi(M)}(\alpha_i) \\
    \end{array}.$$
 Thus, it is clear that $\alpha_i$ and $\beta_i$ are closed if and only if they commute with any other $\alpha_j$ and $\beta_j$.
\end{proof}
This clearly obstructs expressing $\omega = \sum_i \alpha_i \wedge \beta_i$ with $\alpha_i$ and $\beta_i$ closed, as in the Darboux-type normal form.

\begin{cor}
    In particular, if $\EA$ is the zero tangent bundle, no $\EA$-symplectic structure can take the form described in Proposition \ref{prop.darboux.e} with $\alpha_i$ and $\beta_i$ closed.
\end{cor}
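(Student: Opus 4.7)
The plan is to argue by contradiction, showing that along $Z$ any closed section of ${}^0T^*M$ is forced to take values in a single line inside $\EA^*|_Z$, so no collection of $2k=\dim M$ closed sections can form a frame of $\EA^*$ at a point of $Z$.

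Concretely, I would fix local coordinates $(x, y_1, \dots, y_{n-1})$ adapted to $Z=\{x=0\}$, so that ${}^0TM$ has the local frame $x\partial_x,\, x\partial_{y_1},\dots, x\partial_{y_{n-1}}$ with dual frame $\tfrac{dx}{x},\tfrac{dy_1}{x},\dots,\tfrac{dy_{n-1}}{x}$. A direct computation yields the brackets $[x\partial_x, x\partial_{y_i}] = x\partial_{y_i}$ and $[x\partial_{y_i}, x\partial_{y_j}] = 0$. Writing a candidate section as $\kappa = f_0 \tfrac{dx}{x} + \sum_j f_j \tfrac{dy_j}{x}$ and evaluating the Cartan-type formula for the $\EA$-differential on the pair $(x\partial_x, x\partial_{y_i})$ gives $x(\partial_x f_i - \partial_{y_i} f_0) = f_i$, which forces $f_i|_Z = 0$ for every $i$. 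Hence any closed $\kappa\in \gs(\EA^*)$ restricts along $Z$ to a section of the $1$-dimensional line subbundle spanned by $\tfrac{dx}{x}|_Z \subset \EA^*|_Z$.

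Assuming, toward a contradiction, that an $\EA$-symplectic form decomposes as $\omega = \sum_{i=1}^k \alpha_i \wedge \beta_i$ with all $\alpha_i, \beta_i$ closed (equivalently, forming a commutative frame of $\EA^*$ by Proposition~\ref{prop.darboux.e}), at any point $p\in Z$ the $2k = \dim M \ge 2$ covectors $\alpha_1(p), \beta_1(p), \dots, \alpha_k(p), \beta_k(p)$ all lie in the single line $\KR\cdot \tfrac{dx}{x}|_p$ and therefore cannot span $\EA^*|_p$, contradicting the frame condition.

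The main obstacle is conceptual rather than computational: isolating the right pointwise invariant. Once one notices that the bracket $[x\partial_x, x\partial_{y_i}] = x\partial_{y_i}$ is essentially forced by the $0$-structure, the closedness equation instantly kills the $\tfrac{dy_j}{x}$ components along $Z$ and the claim reduces to a dimension count. An equivalent route, more in the spirit of Proposition~\ref{prop.darboux.e}, is to transport the question via the Lie algebroid isomorphism $\omega^\sharp\colon \EA \to \EA^*$ and look for a commutative frame of ${}^0TM$ itself; writing the frame elements as $V_i = xW_i$, one linearises at $p\in Z$, observes that each $(DV_i)_p$ is the rank-one endomorphism $u\mapsto u(x)\,W_i(p)$, and notes that such endomorphisms pairwise commute only when the $W_i(p)$ are all collinear, which again obstructs the frame condition.
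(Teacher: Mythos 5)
Your main argument is correct and complete, but it takes a genuinely different route from the one the paper intends. The corollary is stated in the paper without proof, and the intended argument is clearly the one assembled from the surrounding results: Proposition \ref{prop.darboux.e} converts ``all $\alpha_i,\beta_i$ closed'' into ``$\alpha_i,\beta_i$ form a commutative frame of $\EA^*$'', the isomorphism $\omega^\sharp\colon\EA\to\EA^*$ transports this to a commutative frame of ${}^0TM$, and the appendix proposition (non\-commutative isotropy Lie algebra $\Rightarrow$ no commutative frame) kills this because the isotropy of ${}^0TM$ at $p\in Z$ is the non-abelian algebra with $[e_0,e_i]=e_i$. You instead bypass the commutative-frame dictionary entirely and show directly, from the Cartan formula and the bracket $[x\partial_x,x\partial_{y_i}]=x\partial_{y_i}$, that every closed section of $\EA^*$ restricts along $Z$ to the line spanned by $dx/x$, so $2k\ge 2$ closed sections can never frame (or even span) $\EA^*$ over $Z$; your computation $x(\partial_x f_i-\partial_{y_i}f_0)=f_i$ is correct and the dimension count finishes it. Your version is more elementary and self-contained, and it actually yields a slightly stronger statement (it rules out nondegeneracy of $\sum\alpha_i\wedge\beta_i$ at points of $Z$ without invoking Proposition \ref{prop.darboux.e} at all), whereas the paper's route reuses the commutative-frame machinery it has just built and is uniform with the rest of Section \ref{sec.int.e}. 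One small caveat on your secondary ``equivalent route'': rank-one endomorphisms $u\mapsto u(x)\,W_i(p)$ also commute pairwise when all $W_i(p)$ are tangent to $Z$ (i.e.\ all $dx(W_i(p))=0$), not only when the $W_i(p)$ are collinear; this case is excluded because a frame forces the $W_i(p)$ to span $T_pM$, but the claim as you wrote it is not literally true and should be qualified.
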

Observe that there are two main ingredients in Proposition \ref{prop.darboux.e}: The first is the assumption of the existence of a \emph{splitted} form decomposition as $\omega = \sum_i \alpha_i \wedge \beta_i$. The second condition is that $\alpha_i$ and $\beta_i$ are closed 1-forms in the $E$-complex. The proposition is stated in terms of the existence of a splitted form but how does one obtain such a splitting?. For certain $E$-structures, such splitted forms are intrinsically related to Darboux normal forms.

These results, which relate the existence of commutative frames to the splitting of Darboux-type normal forms and closed forms, mirror the classical Darboux-Carathéodory theorems under the assumption of the existence of a set of Poisson-commuting functions. The Darboux-Carathéodory normal form is of total type when the number of Poisson-commuting functions is maximal, naturally leading us to the realm of integrable systems.

In the next subsection, we analyze in detail two Darboux-Carathéodory theorems under the assumption of the existence of first integrals for (regular) Poisson manifolds and $b$-symplectic manifolds.

\subsubsection{Darboux-Carathéodory theorem and commutative frames}

For integrable systems on regular Poisson manifolds and $b$-symplectic manifolds (or more generally $b^m$-symplectic manifolds), these Darboux-Carathéodory type theorems have been established in the literature. The Darboux-Carathéodory theorems are a key step in proving action-angle coordinate theorems, which can be seen as cotangent models \cite{KM17}.

Recall that both regular Poisson manifolds and $b$-symplectic, or more generally $b^m$-symplectic, manifolds are examples of $E$-symplectic manifolds.

We present the statements of such theorems and connect them to the earlier proposition. For simplicity of exposition, we focus on the case of $b$-manifolds. A Darboux-Carathéodory theorem for $b^m$-symplectic manifolds is obtained in \cite{MP}.

\subsubsubsection{The case of regular Poisson manifolds}

The following theorem, proved in \cite{LaurentMirandaVanhaecke}is a generalization of the Carathéo\-dory-Jacobi-Lie theorem \cite[Th.\
 13.4.1]{libermannmarle} for an arbitrary Poisson manifold $(M,\Pi)$. It provides a set of canonical local
coordinates for the Poisson structure $\Pi$, which contains a given set $p_1,\dots,p_r$ of functions in which pairwise commute for the Poisson bracket. This result is often used for the proof of the action-angle-type theorems.

\begin{thm}[Laurent-Miranda-Vanhaecke, \cite{LaurentMirandaVanhaecke}] \label{thm:localsplitting}
  Let $m$ be a point of a Poisson manifold $(M,\Pi)$ of dimension~$n$. Let $p_1,\dots,p_r$ be $r$ functions in
  involution, defined on a neighbourhood of $m$, which vanish at $m$ and whose Hamiltonian vector fields are
  linearly independent at $m$.  There exist, on a neighbourhood $U$ of $m$, functions
  $q_1,\dots,q_r,z_1,\dots,z_{n-2r}$, such that
  \begin{enumerate}
    \item The $n$ functions $(p_1,q_1,\dots,p_r,q_r, z_1,\dots,z_{n-2r})$ form a system of coordinates on $U$,
    centered at $m$;
    \item The Poisson structure $\Pi$ is given on $U$ by
    \begin{equation}\label{eq:thm_split}
      \Pi=\sum_{i=1}^r\pp{}{q_i}\we\pp{}{p_i}+\sum_{i,j=1}^{n-2r} g_{ij}(z)\pp{}{z_i}\we\pp{}{z_j},
    \end{equation}
    where each function $g_{ij}(z)$ is a smooth function on $U$ and is independent of $p_1,\dots,p_r,
    q_1,\dots,q_r$.
  \end{enumerate}
  The rank of $\Pi$ at $m$ is $2r$ if and only if all the functions $g_{ij}(z)$ vanish for $z=0$.
  Such local coordinates $(p_1,q_1,\dots,p_r,q_r, z_1,\dots,z_{n-2r}) $ are called \emph{coordinates conjugated to
  $p_1,\dots,p_r$}.
\end{thm}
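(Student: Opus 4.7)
The plan is to build the required coordinate system by successively normalizing the Poisson bracket, starting from the given involutive family $p_1,\dots,p_r$. The central algebraic input is the identity relating Poisson brackets of functions to Lie brackets of their Hamiltonian vector fields: since $\{p_i,p_j\}=0$, the Hamiltonian vector fields $X_{p_i}=\Pi^\sharp(dp_i)$ commute pairwise, and by hypothesis they are linearly independent at $m$. The simultaneous flow-box theorem for commuting independent vector fields then yields local coordinates $(q_1,\dots,q_r,y_1,\dots,y_{n-r})$ centered at $m$ in which $X_{p_i}=\partial/\partial q_i$. One immediately reads off $\{p_i,q_j\}=X_{p_i}(q_j)=\delta_{ij}$, and the relation $\{p_i,q_j\}=\delta_{ij}$ together with the independence of the $dq_j$ shows that $(p_1,q_1,\dots,p_r,q_r,y_1,\dots,y_{n-2r})$ is a genuine coordinate system near $m$ after renaming suitable $y$'s as $p$'s.

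The next step is to adjust the $q_i$'s so that they Poisson-commute. The Jacobi identity gives
\[
X_{p_k}(\{q_i,q_j\})=\{\{p_k,q_i\},q_j\}+\{q_i,\{p_k,q_j\}\}=0,
\]
so each $\{q_i,q_j\}$ is a function only of the transverse variables $y$. I would seek a correction $q_i\mapsto q_i'=q_i-h_i(y)$, which preserves $\{p_i,q_j'\}=\delta_{ij}$ because the $h_i$ depend only on $y$, so that the corrected functions satisfy $\{q_i',q_j'\}=0$. Expanding, this reduces to an overdetermined PDE system for the $h_i$; its integrability is forced by the Jacobi identity applied to triples $(q_i,q_j,q_k)$, which makes the antisymmetric matrix $(\{q_i,q_j\})$ closed as a $2$-form on the $y$-slice, and a Poincar\'e-lemma argument then produces the required $h_i$.

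For the transverse coordinates, observe that after the adjustment the $2r$ Hamiltonian vector fields $X_{p_1},\dots,X_{p_r},X_{q_1},\dots,X_{q_r}$ commute pairwise, since the brackets of the corresponding functions are all constants ($0$ or $\delta_{ij}$). Pick a codimension-$2r$ submanifold $N\subset U$ transverse to these fields at $m$ and coordinates $z_1,\dots,z_{n-2r}$ on $N$, then extend the $z_k$ to $U$ as common first integrals of the $X_{p_i}$ and $X_{q_i}$, which is possible because the fields commute and are independent. By construction $\{p_i,z_k\}=\{q_i,z_k\}=0$, and a second application of Jacobi shows that $\{z_k,z_l\}$ is annihilated by every $X_{p_i}$ and $X_{q_i}$, so it is a function of the $z$'s alone. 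The Poisson tensor therefore takes the split form asserted. The rank statement is immediate: the $(p,q)$-block always contributes $2r$ to the rank, while the $z$-block contributes strictly more iff some $g_{ij}(0)\neq 0$.

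The main obstacle is the second step, namely enforcing $\{q_i',q_j'\}=0$ on an entire neighbourhood rather than merely on a transversal. The na\"ive recipe of defining the $q_i$ as flow times out of a transversal only forces this identity at $y=0$; globalising the vanishing requires the $h_i$-correction, whose solvability rests on the closed-$2$-form argument on the $y$-slice. Once this normalization is in place, the remaining assertions follow from routine applications of the Jacobi identity and the commuting flow-box theorem.
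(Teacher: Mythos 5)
This theorem is not proved in the paper: it is quoted from \cite{LaurentMirandaVanhaecke}, and the paper only remarks that the proof there is a Frobenius-type argument built on commuting Hamiltonian vector fields and their joint flows. Your attempt follows the same general philosophy (simultaneous flow-box for the commuting fields $X_{p_i}$, Jacobi identity to control the transverse brackets), and your steps producing the flow-box coordinates, the transverse functions $z_k$, and the rank statement are essentially correct. The genuine gap is exactly the step you flag as the main obstacle: forcing $\{q_i,q_j\}=0$.

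Writing $c_{ij}:=\{q_i,q_j\}$ (a function of $y$ alone, as you correctly show), the correction $q_i'=q_i-h_i(y)$ gives
\[
\{q_i',q_j'\}=c_{ij}-X_{q_i}(h_j)+X_{q_j}(h_i)+\{h_i,h_j\},
\]
and this is not a linear equation of the form $c=d\eta$. Two things break the Poincar\'e-lemma mechanism. First, the quadratic term $\{h_i,h_j\}$ need not vanish: the functions of $y$ form a Poisson subalgebra whose bracket is nontrivial whenever the rank of $\Pi$ exceeds $2r$, so even if you realized $c_{ij}$ as $X_{q_i}(h_j)-X_{q_j}(h_i)$ you would not have killed the bracket. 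Second, the ``closedness'' you invoke is the cyclic identity $X_{q_i}(c_{jk})+X_{q_j}(c_{ki})+X_{q_k}(c_{ij})=0$; this is the de Rham closedness of $(c_{ij})$ only if the vector fields $X_{q_i}$ (projected to the $y$-slice) were commuting coordinate fields, which is precisely what fails before the correction, since $[X_{q_i},X_{q_j}]=X_{c_{ij}}\neq 0$. In the Chevalley--Eilenberg differential for this non-commuting frame the structure-constant terms are absent from your identity, so $(c_{ij})$ is not closed in any complex to which a Poincar\'e lemma applies. Note also that for $r=2$ your integrability condition is vacuous (there are no triples of distinct indices), yet the equation for $h_1,h_2$ still has to be solved, which shows the proposed mechanism cannot be the right one.

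The standard repair, and the actual proof in \cite{LaurentMirandaVanhaecke} (and in Libermann--Marle for the symplectic Carath\'eodory--Jacobi--Lie theorem), is to normalize the $q_i$ one at a time, by induction on $r$: having arranged $q_1',\dots,q_{k-1}'$ to pairwise commute and to be conjugate to $p_1,\dots,p_{k-1}$, one seeks a single function $h_k(y)$ solving the \emph{linear} overdetermined system $X_{q_i'}(h_k)=\{q_i',q_k\}$ for $i<k$. Since the $X_{q_i'}$ now genuinely commute, this is a Frobenius system, and its compatibility conditions $X_{q_i'}(\{q_j',q_k\})=X_{q_j'}(\{q_i',q_k\})$ follow from the Jacobi identity together with $\{q_i',q_j'\}=0$. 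With that replacement the rest of your argument goes through unchanged; as written, the simultaneous correction is a genuine gap.
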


Observe that Proposition \ref{prop:comm.chart.grpd} is given in terms of split forms. Whenever we consider a regular foliation we can consider the $E$-forms as foliated forms. In particular, a regular Poisson structure defines a closed $2$-form in the $E$-complex of foliated forms (see also, \cite{MS21}). 

The theorem above applied to the case of regular Poisson manifolds yields the desired splitted form.

As we want the forms to be split, the case we are more interested in would be the following Corollary which holds for regular Poisson manifolds.
\begin{cor} 
    When the rank of $\Pi$  is $2r$  the Poisson structure can be written as:

  \begin{equation}\label{eq:thm.split.poi}
      \Pi=\sum_{i=1}^r\pp{}{q_i}\we\pp{}{p_i},
    \end{equation}

    and the dual form in the $E$-complex can be written as the split $2$-form:

      \begin{equation}\label{eq:thm.split.sym}
      \omega_\Pi=\sum_{i=1}^r d{q_i}\we d{p_i},
    \end{equation}
    
  Such local coordinates $(p_1,q_1,\dots,p_r,q_r, z_1,\dots,z_{n-2r}) $ are called \emph{coordinates conjugated to
  $p_1,\dots,p_r$}.
\end{cor}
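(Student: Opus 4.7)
The plan is to apply Theorem~\ref{thm:localsplitting} (Laurent-Miranda-Vanhaecke) to the given point, use the regularity hypothesis to kill the transverse summand, and then read off the split $E$-form via part~2 of Theorem~\hyperlink{thm.a}{A}.

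First I would fix $m \in M$ and choose Poisson-commuting functions $p_1, \dots, p_r$ defined on a neighborhood of $m$, vanishing at $m$, with Hamiltonian vector fields linearly independent at $m$. Such a family exists because $E = \pi^\sharp(\Omega^1(M))$ is a regular integrable distribution of rank $2r$: one picks any local Lagrangian subfoliation of $E$ and takes $r$ independent first integrals. Theorem~\ref{thm:localsplitting} then produces coordinates $(p_1, q_1, \dots, p_r, q_r, z_1, \dots, z_{n-2r})$ centered at $m$ in which
$$\Pi = \sum_{i=1}^r \pp{}{q_i} \we \pp{}{p_i} + \sum_{i,j=1}^{n-2r} g_{ij}(z) \pp{}{z_i} \we \pp{}{z_j}.$$

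Second, I would upgrade the rank conclusion from the single point $m$ to the whole chart. The first summand is the standard symplectic bivector with constant rank $2r$, and its image $\mathrm{span}(\partial_{p_i}, \partial_{q_i})$ is transverse to the image $\mathrm{span}(\partial_{z_j})$ of the second summand. Hence the rank of $\Pi$ at any point of the chart equals $2r + \mathrm{rank}[g_{ij}(z)]$, and the regularity hypothesis $\mathrm{rank}(\Pi) = 2r$ throughout the neighborhood forces $[g_{ij}(z)] \equiv 0$, giving (\ref{eq:thm.split.poi}).

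Third, I would identify $\omega_\Pi$. For a regular Poisson manifold the ai-algebroid $\EA$ is exactly the symplectic foliation $E \subset TM$ with anchor the inclusion, so $E$-forms are leafwise forms; in the coordinates above, $E = \mathrm{span}(\partial_{p_i}, \partial_{q_i})$ and $E^*$ is $T^*M$ modulo the span of the $dz_j$. Part~2 of Theorem~\hyperlink{thm.a}{A} characterizes $\omega_\Pi \in \gs(\wedge^2 E^*)$ as the unique $E$-form fiberwise inverse to $\Pi|_E$, and a pointwise linear-algebra check in the above basis yields $\omega_\Pi = \sum_i dq_i \we dp_i$, which is (\ref{eq:thm.split.sym}). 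The main subtlety is the regularity step, since Theorem~\ref{thm:localsplitting} a priori only pins down $g_{ij}(0)$; once the transversality-of-blocks argument is in place, the rest is a routine restatement of the fiberwise Poisson-symplectic duality on $E$.
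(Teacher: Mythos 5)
Your proposal is correct and follows the route the paper intends: the corollary is stated as an immediate consequence of Theorem~\ref{thm:localsplitting}, with the constant-rank hypothesis forcing $g_{ij}\equiv 0$ and the split $E$-form obtained by leafwise dualization as in part~2 of Theorem~A. You actually supply a detail the paper leaves implicit — upgrading the theorem's pointwise conclusion ($g_{ij}(0)=0$) to $g_{ij}\equiv 0$ on the whole chart via the block-rank count — while your opening construction of the involutive functions $p_1,\dots,p_r$ is superfluous, since the corollary inherits them from the hypotheses of the theorem.
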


The proof of this theorem builds upon the existence of a commutative frame and proceeds by employing a Frobenius-type argument. The existence of commuting functions allows for a more precise choice of commutative frames, leading to the derivation of exact one-forms (analogous to the $\beta_i$ in Proposition \ref{prop.darboux.e}). In essence, the proof uses the joint flow of the distributions generated by the Hamiltonian vector fields of the first integrals. Relating this to the constructions in \cite{CF03} would be interesting.

\subsubsubsection{The case of $b$-symplectic manifolds}
For $b$-symplectic manifolds, a Darboux-Carathéodory theorem was proven in \cite{KMS16}. The notion of commuting functions is tricky in this case as one needs to extend the class of smooth functions to include log-terms.
The Darboux-Carathéodory theorem for $b$-manifolds is proved in \cite{KMS16}. Important applications of this theorem in the non-commutative set-up are discussed in \cite{KM16}. 

Let us begin with the definition of a $b$-integrable system. For that we need to consider the set of $b$-functions: $^b C^\infty(M)$, which consists of functions with values in $\mathbb{R} \cup \{\infty\}$ of the form
$$c\,\textrm{log}|f| + g,$$
 where $c \in \mathbb{R}$, $f$ is a defining function for the critical set $Z$ of the $b$-manifold $(M,Z)$, and $g$ is a smooth function.

\begin{defn} [{\bf $b$-integrable system}]\label{def
}
A $b$-integrable system on a $2n$-dimensional $b$-symplectic manifold $(M^{2n},\omega)$ is a collection of $b$-functions $F=(f_1,\ldots,f_n)$ that satisfy the following conditions: \begin{itemize} \item The functions pairwise commute under the Poisson bracket: $\{f_i,f_j\} = 0$ for all $i, j$; \item $df_1 \wedge \dots \wedge df_n$ is nonzero as a section of $\Lambda^n({^b}T^*(M))$ on a dense subset of $M$ and on a dense subset of $Z$. \end{itemize} Points in $M$ where the second condition holds are called {\bf regular} points. \end{defn}

\begin{rk} Note that $df_1 \wedge \dots \wedge df_n$ is nonzero at a point in $\Lambda^n({^b}T^*M)$ if and only if the vector fields $X_{f_1}, \dots, X_{f_n}$ are linearly independent at that point. This follows from the fact that the map ${^b} TM \to {^b} T^*M$, induced by $\omega$, is bijective. Moreover, this condition implies that at least one of the $f_i$ must be a genuine $b$-function, i.e., non-smooth. \end{rk}

It is possible to characterize $b$-integrable systems in terms of what we call a $b$-function:

\begin{prop}\label{prop
} Near a regular point of $Z$, any $b$-integrable system on a $b$-symplectic manifold is locally equivalent to a $b$-integrable system of the form $F = (f_1, \ldots, f_n)$, where $f_1$ is a $b$-function and $f_2, \ldots, f_n$ are smooth ($C^\infty$) functions. Moreover, we may assume that $f_1 = \log|t|$, where $t$ is a global defining function for $Z$. \end{prop}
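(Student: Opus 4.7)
The plan is to peel the logarithmic part off one of the $f_i$ and absorb the remainder into a new choice of defining function. Writing each generator as $f_i = c_i\log|f|+g_i$ with $c_i\in\mathbb{R}$, $g_i\in C^\infty(M)$, and $f$ a fixed defining function for $Z$, the first step is to invoke the preceding remark: the requirement that $df_1\wedge\cdots\wedge df_n$ be nonzero as a section of $\Lambda^n({}^bT^*M)$ at a regular point of $Z$, combined with the bijectivity of the $b$-symplectic musical isomorphism, forces at least one $c_i$ to be nonzero. After relabeling, assume $c_1\neq 0$.

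Next I would kill the log-components of the remaining generators by a linear change. For $i\geq 2$ set
\[
\tilde f_i \,:=\, f_i - \frac{c_i}{c_1}\,f_1 \,=\, g_i - \frac{c_i}{c_1}\,g_1,
\]
which is smooth by construction. By $\mathbb{R}$-bilinearity of the Poisson bracket and $\{f_1,f_1\}=0$, the tuple $(f_1,\tilde f_2,\ldots,\tilde f_n)$ still pairwise Poisson-commutes. Elementary column operations on the top wedge give
\[
df_1\wedge d\tilde f_2\wedge\cdots\wedge d\tilde f_n \,=\, df_1\wedge df_2\wedge\cdots\wedge df_n,
\]
so the non-degeneracy condition is preserved. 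Hence $(f_1,\tilde f_2,\ldots,\tilde f_n)$ is again a $b$-integrable system, locally equivalent to the original, and with the last $n-1$ entries smooth.

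To finish, set $t:=f\cdot e^{g_1/c_1}$. Since $e^{g_1/c_1}$ is smooth and strictly positive, $t$ vanishes precisely along $Z$, and its differential at $Z$ equals $e^{g_1/c_1}\,df|_Z$, which is still transverse to $TZ$; hence $t$ is a defining function for $Z$ on the same domain on which $f$ was. By construction,
\[
c_1\log|t| \,=\, c_1\log|f|+g_1 \,=\, f_1,
\]
and rescaling the first entry of the tuple by the nonzero constant $1/c_1$, which affects neither Poisson-commutativity nor the wedge non-degeneracy, produces the desired system $(\log|t|,\tilde f_2,\ldots,\tilde f_n)$.

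The only nontrivial ingredient is the appeal to the preceding remark in the first step; the rest is routine linear algebra plus the observation that multiplying a defining function by a smooth nowhere-vanishing function yields another defining function. If a global rather than local $t$ is demanded, this is inherited from the analogous assumption on the starting $f$, which the paper's $b$-manifold setup provides.
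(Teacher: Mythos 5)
Your argument is correct and is essentially the standard proof of this statement (the paper quotes the proposition from \cite{KMS16} without reproving it, and the argument there is exactly this one): invoke the preceding remark to get some $c_i\neq 0$, clear the logarithmic parts of the remaining generators by constant-coefficient column operations, which preserves both Poisson commutativity and the top wedge, and absorb $g_1$ into the defining function before rescaling $f_1$ by $1/c_1$. The only point deserving one more sentence is the word ``global'': $t=f\,e^{g_1/c_1}$ is a priori defined only where $g_1$ is, so to literally obtain a global defining function one should extend the positive factor $e^{g_1/c_1}$ to all of $M$ (e.g.\ by interpolating with the constant $1$ away from the chart), which is harmless because the statement and the required identity $f_1=c_1\log|t|$ are local.
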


In the case of $b$-functions  the Darboux-Carathéodory theorem \emph {locally} extends a set of $n$ Poisson commuting and functionally independent $b$-functions to a ``$b$-Darboux'' coordinate system. This result is proven in \cite{KMS16}:

\begin{thm}[Kiesenhofer-Miranda-Scott, \cite{KMS16}]\label{th:darboux-caratheodory}
 Let $(M^{2n}, \omega)$ be a $b$-symplectic manifold, $m$ be a point on the exceptional hypersurface $Z$, and $f_1,\ldots,f_{n}$ be $b$-functions, defined on a neighbourhood of $m$, with the following properties
 \begin{itemize}
 \item $f_1,\ldots,f_{n}$  Poisson commute
 \item  $X_{f_1}\wedge \dots \wedge X_{f_n}$
  is a nonzero section of $\Lambda^n(^bTM)$ at $m$. 
 \end{itemize}   
  Then there exist $b$-functions $(g_1, \dots, g_n)$ around $m$ such that
\[ \omega = \sum_{i=1}^{n} df_i \wedge dg_i.\]
and the vector fields $\{X_{f_i}, X_{g_j}\}_{i, j}$ commute.

Moreover, if $f_1$ is not a smooth function, i.e. $f_1 = c\log|t|$ for some $c \neq 0$ and some local defining function $t$ of $Z$, then the functions $g_i$ can be chosen to be smooth functions for which 
$$(t, f_2, \dots, f_{n},  g_1, \dots, g_n)$$
 are local coordinates around $m$.
\end{thm}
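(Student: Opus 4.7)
The plan is to adapt the classical Carathéodory--Jacobi--Lie construction (as in \cite[Thm.~13.4.1]{libermannmarle}) to the $b$-setting, exploiting that the Hamiltonian vector fields $X_{f_i}$ form a commuting frame of $^bTM$ which is linearly independent at $m$. First, by the preceding Proposition I reduce to the case $f_1 = \log|t|$ for some defining function $t$ of $Z$ and $f_2,\ldots,f_n$ smooth. The involutivity $\{f_i,f_j\}=0$ gives $[X_{f_i}, X_{f_j}] = 0$, so $\{X_{f_i}\}_{i=1}^n$ defines a commutative involutive rank-$n$ distribution in $^bTM$ at $m$. Observe that $X_{f_1}$ is tangent to $Z$ (it is a genuine $b$-vector field) yet non-zero at $Z$ as a section of $^bTM$; this is the only ``$b$-effect'' visible throughout the argument.

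Next I construct smooth functions $g_1, \ldots, g_n$ with $\{f_i, g_j\} = \delta_{ij}$. Pick a smooth codimension-$n$ transversal $\Sigma$ through $m$ to the span of the $X_{f_i}$. Commutativity of the $X_{f_i}$ lets us define a joint flow $\Phi\colon (s_1,\ldots,s_n,p)\mapsto \Phi^{X_{f_1}}_{s_1}\circ\cdots\circ\Phi^{X_{f_n}}_{s_n}(p)$ from a neighborhood of $(0,m)$ in $\mathbb{R}^n\times\Sigma$ to a neighborhood of $m$. Setting $g_i := s_i\circ \Phi^{-1}$ gives $X_{f_i}(g_j) = \delta_{ij}$ by construction. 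The crucial point for smoothness is that, although $X_{f_1}$ has a factor of $t\partial/\partial t$-type near $Z$, its time-$s$ flow acts by $t\mapsto e^{s/c}\,t$, which is a smooth multiplicative rescaling; hence the joint-flow parametrization is a genuine smooth diffeomorphism and the $g_i$ can be chosen smooth rather than logarithmic.

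Then I arrange that $\{g_i, g_j\} = 0$. By Jacobi, $X_{f_i}(\{g_j, g_k\}) = \{\{g_j,g_k\},f_i\} = 0$ after using $\{f_i,g_j\}=\delta_{ij}$, so $\{g_j,g_k\}$ is constant along the joint flow, hence depends only on the transverse coordinates. A classical induction (replace each $g_k$ by $g_k + \psi_k(g_1,\ldots,g_{k-1})$ with $\psi_k$ chosen by the Carathéodory algorithm) then kills all Poisson brackets $\{g_i,g_j\}$ while preserving $\{f_i,g_j\}=\delta_{ij}$ and smoothness. Finally, $(t, f_2, \ldots, f_n, g_1, \ldots, g_n)$ is a local coordinate system near $m$: the vectors $X_{f_i}, X_{g_j}$ form a symplectic basis of $^bTM|_m$ (dual via $\omega$ to $df_i, dg_j$), which settles the coordinate property and gives $\omega = \sum_i df_i\wedge dg_i$ by non-degeneracy. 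Commutativity of $\{X_{f_i}, X_{g_j}\}$ then follows from the identity $X_{\{h,k\}} = [X_h, X_k]$ applied to the bracket relations just established.

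The main obstacle is ensuring smoothness of the $g_i$ in the $b$-setting. Classical Darboux--Carathéodory rests on a Frobenius straightening applied to a commuting frame; here the frame lives in $^bTM$ and its first element is non-zero at $Z$ only in the $b$-sense. Rewriting $X_{f_1}$ as a smooth vector field whose coefficient vanishes transversally to $Z$, one sees that its flow is smooth and has no fixed points near $Z$, which is what makes the joint-flow parametrization a smooth diffeomorphism. The explicit normal form $f_1 = \log|t|$, guaranteed by the preceding Proposition, is precisely what is needed for this step to go through.
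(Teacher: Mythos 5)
First, a caveat: the paper does not prove this statement --- it is quoted verbatim from \cite{KMS16}, accompanied only by a remark that the original proof ``connects the construction of commutative frames with the existence of commuting vector fields determined by the first integrals''. Your strategy (joint flow of the commuting $X_{f_i}$ from a transversal, followed by the Carath\'eodory correction to kill $\{g_i,g_j\}$) is therefore in the same spirit as the cited proof, and the overall architecture is sound.

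There is, however, one genuine gap at precisely the step you identify as crucial, and the patch you offer for it is incorrect. For the joint-flow map $\Phi\colon \mathbb{R}^n\times\Sigma\to M$ to be a diffeomorphism near $(0,m)$ you need the images under the anchor, $\rho(X_{f_1})(m),\dots,\rho(X_{f_n})(m)\in T_mM$, to be linearly independent; linear independence in ${}^bT_mM$ does not give this for free, because at $m\in Z$ the anchor ${}^bTM\to TM$ has the one-dimensional kernel $\langle t\,\partial/\partial t\rangle$. Your substitute argument --- that the flow of $X_{f_1}$ acts by $t\mapsto e^{s/c}t$ --- is false: since $X_{f_1}(f_1)=\{f_1,f_1\}=0$, the flow of $X_{f_1}$ \emph{preserves} $|t|$; the vector field whose flow rescales $t$ is the conjugate one, $X_{g_1}$. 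The correct justification is that the span $L=\langle X_{f_1}(m),\dots,X_{f_n}(m)\rangle$ is Lagrangian in $({}^bT_mM,\omega_m)$, while $\omega_m\bigl(t\partial_t,X_{f_1}(m)\bigr)=\pm\, df_1(t\partial_t)=\pm c\neq 0$, so $t\partial_t\notin L^{\perp_\omega}=L$; hence $\rho|_L$ is injective and the flow-box argument goes through. This is exactly where the hypothesis that $f_1$ is a genuine $b$-function ($c\neq 0$) enters. With that repaired, the remainder of your argument --- the Jacobi-identity computation showing each $\{g_j,g_k\}$ is invariant under the $X_{f_i}$-flows, the iterative correction, and reading off $\omega=\sum_i df_i\wedge dg_i$ from the fact that $\{X_{f_i},X_{g_j}\}$ is a symplectic frame of ${}^bTM$ --- is the standard Carath\'eodory--Jacobi--Lie scheme and is fine.
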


\begin{rk}
    The proof of this theorem connects the construction of commutative frames with the existence of commuting vector fields determined by the first integrals of the integrable system. It would be valuable to connect this to the integration methods in \cite{CF03}.
\end{rk}

\section*{Appendix: Commutative frames of a Lie algebroid}
\label{subsection:holgrpd}

This appendix is key for sections \ref{sec.hol.grpd}, \ref{sec.int.ar} and \ref{sec.int.e} of this article. 

Here we want to describe explicitly the groupoid composition and inverse for the holonomy and source-simply connected groupoid of a foliation in special charts.

Composition on a Lie groupoid is often challenging to express. However, when it satisfies a condition akin to commutativity, the problem becomes easier. We describe groupoids whose Lie algebroid meets this commutative condition, meaning it has a commutative frame, as defined below.

\begin{defn}\label{def:comfrm}
Let $A\fto M$ be a Lie algebroid and $p\in M$. There is a commutative frame around $p$ if there exists a neighbourhood $U$ of $p$ and linearly independent sections $X_1,\dots,X_k\in \gs(A|_U)$ generating $\gs(A|_U)$ such that $[X_i,X_j]=0$ for all $i,j\leq k$.
\end{defn}

\begin{ex}\hfill
    \begin{itemize}
        \item The tangent bundle $TM$ of a manifold $M$ and any regular foliation on $M$ have commutative frames near any point $p\in M$.
        \item A non-commutative finite-dimensional Lie algebra, viewed as a Lie algebroid over a point, has no commutative frames, because it cannot have commutative basis.
        \item The Lie algebroid in $\KR^2$ associated to the foliation generated by $x\dex$ and $\dey$ has commutative frames near any point $p\in \KR^2$.
        \item The Lie algebroid in $\KR^2$ associated to the foliation generated by $x\dex$ and $x\dey$ does not have a commutative frame near the point $(0,0)$.
        
    \end{itemize}
\end{ex}
Given any commutative frame $X_1, \dots, X_k$ on $\gs(A)$ near a point $p$, there exists a small neighbourhood $U$ of $p$ and an open set $\CU' \subset \mathbb{R}^k \times U$ containing $(0,p)$ such that the following defines a source-simply connected Lie groupoid: \begin{itemize} \item The arrows are points in $\CU'$, and the objects are points in $U' := {\rm Pr}_U(\CU') \subset U$. \item The source is $\bs = {\rm Pr}_U$, and the target of an element $(v_1, \dots, v_k, u) \in \CU' \subset \mathbb{R}^k \times U$ is given by the flow $\bt((v_1, \dots, v_k, u)) := \Phi^{v_1 \rho(X_1) + \dots + v_k \rho(X_k)}_1 (u)$. \item Composition is given by addition in $\mathbb{R}^k$, the inverse by multiplying by $-1$, and the identity by the zero section. \end{itemize}

The Lie algebroid of the groupoid above is $A'=\KR^k\times U'$ with anchor $\rho\colon A'\fto TU'$ given by the formula $\rho(e_i,u)=\rho(X_i(u))$ for $e_1,\dots,e_k$ the canonical basis of $\KR^k$ and $u\in U'$. This Lie algebroid has commutative frames near any point $p\in U'$ and $A|_{U'}\cong A'$.

\begin{prop}\label{prop:comm.chart.grpd}
Let $A \fto M$ be an ai-algebroid with a commutative frame around $p \in M$, and let $\CG$ be a Lie groupoid integrating $A$. There exists a local diffeomorphism around $\bie(p)$, preserving composition, inverse, and identity, between $\CG$ and the groupoid $\CU'$ described above.
\end{prop}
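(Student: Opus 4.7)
The plan is to write down $\phi\colon \CU' \to \CG$ explicitly, using the multi-parameter flow of the commutative frame on $\CG$. Extend each section $X_i \in \gs(A|_U)$ to a right-invariant vector field $\tilde X_i$ on a neighborhood of $\bie(U) \subset \CG$. Since $\gs(A) \to \gx(\CG)$ is a Lie algebra morphism onto right-invariant fields and $[X_i,X_j]=0$, the $\tilde X_i$ pairwise commute; hence the joint flow of $\sum v_i \tilde X_i$ factors as the product $\Phi^{v_1 \tilde X_1}_1 \circ \cdots \circ \Phi^{v_k \tilde X_k}_1$. I then define
\[
\phi(v_1,\dots,v_k,u) := \Phi^{v_1 \tilde X_1 + \cdots + v_k \tilde X_k}_1(\bie(u))
\]
on a suitably small open neighborhood $\CU'$ of $(0,p)$ where all the flows are defined.

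A sequence of routine checks shows $\phi$ is a local groupoid morphism near the identity. Because right-invariant vector fields are tangent to source fibers, $\bs \circ \Phi^{\tilde X_i}_t = \bs$, so $\bs_\CG \circ \phi$ agrees with the projection onto $U$, matching $\bs_{\CU'}$. The relation $\bt_* \tilde X_i = \rho(X_i)$ gives $\bt \circ \Phi^{\tilde X_i}_t = \Phi^{\rho(X_i)}_t \circ \bt$, which matches the target map of $\CU'$. At $v=0$, $\phi(0,u) = \bie(u)$, so identity sections are preserved. The differential at $(0,p)$ sends $\partial/\partial v_i \mapsto X_i(p)$ and acts as the identity on $T_p U \to T_{\bie(p)}\bie(U)$; since $T_{\bie(p)}\CG = A_p \oplus T_{\bie(p)}\bie(U)$, this differential is an isomorphism, so after shrinking $\CU'$, $\phi$ is a local diffeomorphism at $\bie(p)$.

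The substantive step is the composition law: for composable arrows $(v,u),(v',u')$ in $\CU'$, i.e.\ $u' = \bt(v,u)$, with product $(v+v',u)$, one must prove $\phi(v+v',u) = \phi(v',u') \cdot \phi(v,u)$. Using commutativity of the $\tilde X_i$ followed by the standard right-invariance identity $\Phi^{\tilde X}_1(g) = \Phi^{\tilde X}_1(\bie(\bt(g))) \cdot g$ applied to $\tilde X = \sum v'_i \tilde X_i$ at $g = \phi(v,u)$, for which $\bt(g) = u'$, one computes
\[
\phi(v+v',u) = \Phi^{\sum v'_i \tilde X_i}_1(\phi(v,u)) = \Phi^{\sum v'_i \tilde X_i}_1(\bie(u')) \cdot \phi(v,u) = \phi(v',u') \cdot \phi(v,u),
\]
and inverse compatibility is then automatic from composition and identities. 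The only real obstacle is technical bookkeeping: one must shrink $U$, extend the $\tilde X_i$ to a common neighborhood of $\bie(U) \subset \CG$, and then shrink $\CU' \subset \KR^k \times U$ so every flow appearing above is defined and lands in that neighborhood. Since the construction reduces to the identity at $(0,p)$ and everything is smooth, this is a standard open-set shrinking argument.
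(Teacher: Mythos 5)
Your proof is correct, and it reaches the conclusion by a more constructive route than the paper. The paper's own argument is a two-line sketch: it notes that $A|_{U'}\cong A'=\KR^k\times U'$ and invokes Lie's second theorem to produce a groupoid morphism $\CU'\to\CG$, then checks that its derivative at $\bie(p)$ is an isomorphism. You instead build that integrating morphism by hand, as $\phi(v,u)=\Phi^{\sum v_i\widetilde{X}_i}_1(\bie(u))$ for the right-invariant extensions $\widetilde{X}_i$, and verify the morphism properties directly from $[\widetilde{X}_i,\widetilde{X}_j]=0$, $\bs$-fiber tangency, $\bt$-relatedness to $\rho(X_i)$, and the right-invariance identity $\Phi^{\widetilde{X}}_1(g)=\Phi^{\widetilde{X}}_1(\bie(\bt(g)))\cdot g$. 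This amounts to proving the special case of Lie's second theorem actually needed here; it buys you two things the paper's sketch leaves implicit: you never need $\CU'$ to be source-simply connected (a hypothesis of Lie II that requires choosing the fibers of $\CU'$ contractible), and you obtain the explicit formulas for $\bs\circ\phi$, $\bt\circ\phi$ and the composition law that the paper relies on later (Theorem \ref{lem.comm.loc} and Lemma \ref{lem.comm.loc.poi}). The only point worth flagging is the sign/orientation convention: the identity $\widetilde{[X,Y]}=[\widetilde{X},\widetilde{Y}]$ and the order of factors in $\phi(v+v',u)=\phi(v',u')\cdot\phi(v,u)$ depend on whether $A$ is modelled on $\ker d\bs|_M$ with right-invariant extensions; with the convention you chose everything is consistent, but it should be stated once.
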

\begin{proof}(Sketch) The Lie algebroid $A|_{U'}$ is isomorphic to the Lie algebroid $A' := \mathbb{R}^k \times U' \to U'$, which is the Lie algebroid of $\CU'$. By Lie's second theorem, there exists a Lie groupoid map $\CU' \to \CG$. This map is a local diffeomorphism around $\bie(p)$ because its derivative is an isomorphism, corresponding to the isomorphism $A' \cong A|_{U'}$. \end{proof}

The proposition above provides charts for $\CG$ and a local description of its groupoid structure near $\iota(p)$.

The following proposition gives a necessary (but likely not sufficient) condition for the existence of a commutative frame around a point.

\begin{prop} Let $A \to M$ be a Lie algebroid and $p \in M$. If the isotropy Lie algebra $\g_p := \ker(\rho)_p$ is non-commutative, then there is no commutative frame around $p$. \end{prop}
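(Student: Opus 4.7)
The statement is the contrapositive of: if a commutative frame $X_1,\dots,X_k$ exists in a neighbourhood $U$ of $p$, then $\g_p=\ker(\rho_p)$ is abelian. So I will assume such a frame and show that the induced Lie bracket on $\g_p$ is identically zero.

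First I would recall the (standard) construction of the isotropy Lie algebra bracket: for $v,w\in \g_p\subset A_p$, pick any extensions $V,W\in \Gamma(A|_U)$ with $V(p)=v$, $W(p)=w$, and define $[v,w]_{\g_p}:=[V,W]_A(p)$. The Leibniz rule $[V,fW]=f[V,W]+(\rho(V)f)W$ together with $\rho(V)_p=\rho(v)=0$ shows that this value is independent of the chosen extensions, so the bracket on $\g_p$ is well defined.

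Next, since $X_1,\dots,X_k$ is a frame generating $\Gamma(A|_U)$ as a $C^\infty(U)$-module, I can expand $v=\sum_i a_i\,X_i(p)$ and $w=\sum_j b_j\,X_j(p)$ with $a_i,b_j\in\mathbb R$, and take the \emph{constant-coefficient} extensions $V:=\sum_i a_i X_i$ and $W:=\sum_j b_j X_j$. The bilinearity of the Lie bracket and the commutativity of the frame then yield
\[
[V,W]_A \;=\; \sum_{i,j} a_i b_j\,[X_i,X_j]_A \;=\; 0,
\]
so in particular $[v,w]_{\g_p}=[V,W](p)=0$. Since $v,w\in\g_p$ were arbitrary, $\g_p$ is abelian, contradicting the hypothesis.

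There is essentially no obstacle here: the only subtle point is the well-definedness of the isotropy bracket (which I use implicitly when claiming it is computed by \emph{any} pair of extensions, in particular the constant-coefficient ones). Everything else is a one-line bilinear expansion. If desired, this could be strengthened to a structural observation: the proof actually shows that for any $A$ admitting a commutative frame near $p$, the whole fiber bracket vanishes on any subspace on which $\rho_p$ vanishes, so commutativity of the isotropy is a genuine obstruction and not just of its abstract Lie algebra structure.
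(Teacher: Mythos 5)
The paper states this proposition without proof, so there is nothing to compare against; your argument is correct and is surely the intended one. You prove the contrapositive by recalling that the isotropy bracket on $\mathfrak{g}_p=\ker(\rho_p)$ is independent of the choice of extensions (using $\rho(v)=\rho(w)=0$ in the Leibniz rule), and then computing it with constant-coefficient combinations of the commuting frame, which forces it to vanish. The only point worth being explicit about is that the frame elements $X_i(p)$ span the fiber $A_p$ (so that the constant-coefficient extensions of $v,w$ exist); this follows from the $X_i$ generating $\Gamma(A|_U)$ as a $C^\infty(U)$-module, and the rest is exactly the one-line bilinear expansion you give.
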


The tangent, $b^m$-tangent, $c$-tangent bundles, and regular foliations have commutative frames because in its definition they are described by the existence of a commutative frame, ex: for $b^m$ there is $\left<x_0^m \de_{x_0},\de_{x_1},\dots\right>$; for a case of $c$ there is $\left<x_0 \de_{x_0},x_1\de_{x_1},\de_{x_2},\dots \right>$ or the local form for regular foliations.

\begin{lem}\label{lem.regular.pair}
    If $A$ is a regular foliation, i.e. the anchor map $\rho\colon A\fto TM$ is injective, $\CG$ a groupoid integrating $A$ then the map:
    $$\bt\times\bs: \CU''\fto M\times M$$
    is a groupoid immersion (groupoid map, smooth, and with injective derivative).

    This means that, near the identity bisection, the holonomy groupoid of a regular foliation is isomorphic to a submanifold of the pair groupoid $M\times M$.
\end{lem}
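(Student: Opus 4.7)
The plan is to verify separately that $\bt\times\bs$ is (i) a groupoid morphism, (ii) smooth, and (iii) has injective derivative, and then to deduce the embedding near $\iota(M)$ from the local embedding theorem.

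For (i), recall that the pair groupoid has composition $(x,y)\cdot(y,z)=(x,z)$. Whenever $g,h\in \CG$ are composable, $\bt(gh)=\bt(g)$ and $\bs(gh)=\bs(h)$, so
$$(\bt\times\bs)(gh)=(\bt(g),\bs(h))=(\bt\times\bs)(g)\cdot(\bt\times\bs)(h),$$
and units are sent to units. Claim (ii) is immediate because $\bs$ and $\bt$ are smooth maps.

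The content is in (iii). I would use the standard right-translation identification $\ker(d\bs)_g\cong A_{\bt(g)}$: extending $X\in A_{\bt(g)}$ by right-invariance gives a vector field on $\CG$ tangent to the $\bs$-fibers whose value at $g$ lies in $\ker d\bs_g$, and for which $d\bt$ returns precisely $\rho(X)\in T_{\bt(g)}M$ (since $\bt\circ R_g=\bt$, applying $d$ at $\iota(\bt(g))$ recovers the anchor). Now if $v\in\ker d(\bt\times\bs)_g$, then $d\bs(v)=0$ presents $v$ as coming from some $X\in A_{\bt(g)}$, and $d\bt(v)=0$ yields $\rho(X)=0$; the regularity hypothesis (injective $\rho$) forces $X=0$, hence $v=0$.

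With (i)--(iii) in hand, the deduction is straightforward. The restriction of $\bt\times\bs$ to $\iota(M)\subset \CG$ coincides with the diagonal embedding $x\mapsto(x,x)$, which is an embedding of $M$ into $M\times M$. Applying the local embedding theorem along this submanifold produces an open neighbourhood $\CU''\subset \CG$ of $\iota(M)$ on which $\bt\times\bs$ is an injective immersion, hence an embedding onto its image; property (i) ensures this image is a sub-groupoid of the pair groupoid $M\times M$. The only potentially delicate point is the right-invariant identification used in (iii), which is a classical fact from Lie groupoid theory, so no serious obstacle is expected.
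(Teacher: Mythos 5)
Your proof is correct, but it takes a genuinely different route from the paper. The paper's proof is a two-line reduction to Proposition \ref{prop:comm.chart.grpd}: an injective (regular) algebroid admits commutative frames, so near the identity bisection $\CG$ is modelled on the chart $\CU'\subset \KR^k\times U$ in which $\bs$ is the projection and $\bt$ is a time-one flow, and in these coordinates the injectivity of $d(\bt\times\bs)$ is read off directly from the injectivity of $\rho$. You instead argue intrinsically: you use the right-translation identification $\ker(d\bs)_g\cong A_{\bt(g)}$ together with the fact that $d\bt$ composed with this identification is the anchor, so $\ker d(\bt\times\bs)_g=\ker d\bs_g\cap\ker d\bt_g$ corresponds to $\ker\rho_{\bt(g)}=0$. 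This buys you more than the paper's argument: it shows $\bt\times\bs$ is an immersion at \emph{every} point of $\CG$, not only near $\iota(M)$, and it avoids any appeal to commutative frames or to the explicit charts. The paper's version, by contrast, is the natural one within its framework, since those charts are reused throughout Sections \ref{sec.hol.grpd} and \ref{sec.int.ar}. Your passage from injective immersion to embedding near $\iota(M)$ (via the fact that $\bt\times\bs$ restricts to the diagonal embedding on the unit bisection and the standard neighbourhood-of-a-closed-embedded-submanifold argument) is at the same level of detail as the paper's ``shrink $\CU'$ if necessary,'' and is fine.
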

 \begin{proof} Injective Lie algebroids are integrable and have commutative frames. Then, by Proposition \ref{prop:comm.chart.grpd}, the map $\bt \times \bs$ is an immersion (since the Lie algebroid is injective). The neighbourhood $\CU'$ can be shrunk to achieve an embedding if necessary. \end{proof}

\bibliographystyle{alpha}%{habbrv}
\bibliography{alga}

 \end{document}